\newtheorem{corollary}{Corollary}
\newtheorem{remark}{Remark}
\def\PA{\mbox{PA}}
\def\PAF{\mbox{\bf PA}}
\def\PATWO{\PA^{\hspace{-1 pt}2}}
\def\ZFC{\mbox{ZFC}}
\def\ZFCTWO{\ZFC^{\hspace{1 pt}2}}
\def\ZF{\mbox{ZF}}
\def\ZFF{\mbox{\bf ZF}}
\def\ZFTWO{\ZF^{\hspace{1 pt}2}}
\def\IA{\mbox{IA}}
\def\IO{\mbox{IO}}
\def\ISO{\mbox{\rm ISO}}
\def\Iso{\mbox{\rm ISO}}
\def\AUT{\mbox{AUT}}
\def\Aut{\mbox{AUT}}
\def\oN{\Bbb N}
\title{Philosophical Uses of Categoricity Arguments}
\author{Penelope Maddy\\ {University of California, Irvine}

\and {Jouko V\"a\"an\"anen}\\ {University of Helsinki}}
\begin{document}

\maketitle

\begin{abstract}
This book addresses the viability of categoricity arguments in philosophy by focusing with some care on the nature of the specific conclusions that a sampling of prominent figures have attempted to draw – the same theorem might successfully support one such conclusion while failing to support another.  It begins with the historical cases of Dedekind, Zermelo, and Kreisel, casting doubt on received readings of the latter two, at least, and highlighting the success of all three in achieving what are argued to be their actual goals. These earlier uses of categoricity arguments are then compared and contrasted with more recent work of Parsons and the co-authors Button and Walsh.  Highlighting the roles of first- and second-order theorems, of external and (two varieties of) internal theorems, the book eventually concludes that categoricity arguments have been more effective in historical cases that reflect philosophically on internal mathematical matters than in more recent uses for questions of pre-theoretic metaphysics.
\end{abstract}





\section{Introduction}\label{sec:int}

Mathematicians and philosophers have appealed to categoricity arguments in a surprisingly varied range of contexts.  One familiar example calls on second-order categoricity in an attempt 
to show that the Continuum Hypothesis, despite its formal independence, has a determinate truth value, but this doesn’t exhaust the uses of categoricity even in set theory, not to mention its appearance in various roles in discussions of arithmetic.  Here we compare and contrast a sampling of these deployments to get a sense of when these arguments tend to succeed and when they tend to fail.  Our story begins with two historical landmarks, Dedekind and Zermelo, on arithmetic and set theory, respectively, and ends with two leading contemporary writers, Charles Parsons and the co-authors Tim Button and Sean Walsh, again on arithmetic and set theory, respectively.  In between, we pause over the well-known contribution of Georg Kreisel.  In each case,\footnote{With the exception of 
Section \ref{sec:kre}, where no new mathematics is involved.} we ask: what does the author set out to accomplish, philosophically?; what do they actually do (or what can be done), mathematically?; and does what’s done (or can be done) accomplish what they set out to do?  We find this focus on context  illuminating:  these authors have qualitatively different philosophical goals, and what works for one might not work for another.

\section{Dedekind in `Was sind und was sollen die Zahlen? (1888)}\label{sec:ded}  

\subsection{What does he set out to accomplish?}  Dedekind’s motivation for his treatment of natural numbers in `Was sind und was sollen die Zahlen' (\cite{02693454}) is explicit: ‘in science nothing capable of proof ought to be believed without proof’ (p. 790).\footnote{Unless otherwise indicated, all references in this section are to the English translation of \cite{02693454} in \cite[pp. 796-833]{Ewald2005-EWAFKT-4}.} Ideally, such a proof should not depend on the vagaries of geometric intuition or temporal intuition or anything of that sort; rather, the ‘number-concept’ and the resulting proofs should emerge as ‘an immediate product of the pure laws of thought’ (pp. 790-791).  This is what Dedekind means when he says that ‘arithmetic [is] merely a part of logic’ (p. 790), so it’s important to note that these ‘pure laws of thought’ include what we would regard as set formation:
\begin{quote}
It very frequently happens that different things, a, b, c, … for some reason  can be considered from a common point of view, can be associated in the mind, and we say that they form a $\textit{system}$.  (\S 2, p. 797)
\end{quote}

\noindent (Of course, a ‘system’ here is what we’d call a ‘set’.)  In sum, then, Dedekind is out to show that the concept of number can be characterized and the theorems of arithmetic proved using only logical notions, in his generous sense of ‘logical’.  This would complete the project, initiated in \cite{rd}, of founding the calculus and higher analysis without appeal to geometric or temporal intuition.\footnote{The weak set theory of \cite{02693454} is enough for the construction of reals via Dedekind cuts in \cite{rd}.}

\subsection{What does he actually do?}\label{subsec:ded2} Dedekind begins by articulating some of his logical laws of thought, in particular, some pre-theoretic metaphysical truths about sets.\footnote{\label{Dede} Fortunately, difficult questions about the status of these sets -- e.g., are they understood realistically or idealistically? -- can be set aside for present purposes.} The most important of these, obviously, is the above-noted assumption that `very frequently' some things `can be considered from a common point of view' and thus collected into a set. (If this isn't Unlimited Comprehension, Dedekind tells us nothing about how it differs; only later does he recognize the danger (see below).) From there, he develops an early theory of sets:  extensionality and the existence of singletons (\S 2), unions (\S 8) and intersections of families (\S 17).  He introduces functions (`mappings', \S 21) as entities distinct from sets, assumes that identity functions exist, that the restriction of a function to a subset is still a function, and that functions can be composed (implicitly).

He then introduces the key idea of a `$\phi$-chain' (\S 37):\footnote{In \cite[pp. 100-101]{dk}, Dedekind explains the importance of this notion to his critic, Keferstein, who thought it could be eliminated.  Dedekind points out that a simply infinite system, $(N, a, \phi)$, could be contained in a larger system, $S$, with some `arbitrary additional elements $t$, to which the mapping $\phi$' could be applied and a subset, $T$, containing $t$ and closed under $\phi$.  The question is how `to cleanse our system $S ...$ of such alien intruders $t$ as disturb every vestige of order and to restrict it to $N$?'  We might try saying `an element $n$ belongs to the sequence $N$ if and only if, starting with the element [a] and counting on and on steadfastly $...$ through a finite number of iterations of the mapping $\phi ...$ I actually reach the element $n ...$ But this $...$ would $...$ contain the most pernicious and obvious kind of vicious circle$...$ Thus, how can I, without presupposing any arithmetic knowledge, give an unambiguous conceptual foundation to the distinction between the elements $n$ and the elements $t$?  Merely through consideration of $\textit{chains} ...$ and$...$ by means of these, completely!'  He notes that Frege's ancestral `agrees in $\textit{essence}$ with my notion of chain $...$ only, one must not be put off by his somewhat inconvenient terminology'. }  

\newtheorem{definition}{Definition}

\begin{definition}

If $\phi$ is a function from S to S, and $K \subseteq S$, then K is a \emph{$\phi$-chain} iff $\forall {x}\in {K}(\phi({x})\in{K})$.

\end{definition}

\noindent This notion in hand, in  Dededkind defines 

\begin{definition}
	
\textit{S} is \emph{simply infinite}	iff there is a one-to-one $\phi$ from \textit{S} to \textit{S} and an $s \in {S}$ such that

\begin{enumerate}
	\item there is no ${x}\in {S}$ with $\phi({x})={s}$,
	
	\item $S=\bigcap\{{X}\mid{s}\in{X} \mbox{ and }{X}\mbox{  is a }\phi\mbox{-chain}\}$.

\end{enumerate}
\end{definition}

\noindent  By this point, he's defined Dedekind-infinite (\S 64) and proved that there's an infinite set in the notorious \S 66:  the collection of `all things which can be objects of my thought' is infinite because a function that sends each element therein to the thought of that element in fact maps the set to a proper subset of itself (at least one element is not a thought,`e.g., my own ego')!  He then shows that every infinite set contains a simply infinite set.  

So, finally, where are the numbers?  Here Dedekind abstracts:

\begin{quote}
If in the consideration of a simply infinite system \textit{N} ordered by a mapping $\phi$ we entirely neglect the special character of the elements, simply retaining their distinguishability and taking into account only the relations to one another in which they are placed by the ordering mapping $\phi$, then these elements are called $\textit{natural}$ ${numbers}$.  (\S 73, p. 809)
\end{quote}

\noindent The precise ontological status of these numbers won’t concern us here, but for the record they are ‘a free creation of the human mind’ (\S 73, p. 809).\footnote{Exactly what Dedekind intends by `free creation' is beside the point for our purposes, but it is a fascinating question.  See  \cite{Reck} and  \cite{MR3888849} for quite different readings.}

The so-called Peano Axioms are implicit in the definition of simply infinite: if $S$ is simply infinite, with $\phi$ from $S$ to $S$, and $s \in S$, then $A \subseteq S$, $s \in A$, and $A$ closed under $\phi$ implies $A = S$.  With the axioms in place, the usual theorems of arithmetic follow and ‘are always the same in all ordered simply infinite systems, whatever names may happen to be given to the individual elements’ (\S 73, p. 809).  

This is certainly true – the consequences of $\PA$ hold in all relevant systems – but it isn’t enough to guarantee that performing the process of abstraction described in the previous paragraph on different simply infinite systems won’t generate different natural numbers.  Apparently sensitive to this loose end, Dedekind refers us forward to his proofs that any simply infinite set is isomorphic to the numbers (\S 132) and that any set isomorphic to the numbers is simply infinite (\S 133).  Thus, the simply infinite systems form an isomorphism class (\S 34), so it doesn't matter which one he happens to abstract from in \S 73. Only then, in \S 134, does he conclude that ‘the definition of the notion of numbers given in \S 73 is fully justified’ (\S 134, p. 823).  This is the role of his famous categoricity result in the overall project of showing that the natural numbers can be characterized without appeal to intuition, in purely logical terms.

So, how does the proof go?  Averting our gaze from 
`free creations', we  sketch a direct Dedekind-style proof that any two simply infinite sets are isomorphic:  

\newtheorem{theorem}{Theorem}
\newtheorem{lemma}{Lemma}

\begin{theorem}\label{ded1} Suppose $N$ and $N'$ are simply infinite, that is, that there are $a\in N$ and $a'\in N'$, and one-one functions $\phi:N\to N$, and $\phi':N'\to N'$ such that
	
	\begin{equation} 
		N=\bigcap\{X:a\in X  \mbox{ and $X$ is a } \phi\mbox{-chain}\}
	\end{equation}
	
	and 
	
	\begin{equation}
	N'=\bigcap\{X:a'\in X  \mbox{ and $X$ is a } \phi'\mbox{-chain}\},
	\end{equation}

then $N \cong N'$.
	
\end{theorem}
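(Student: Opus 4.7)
The plan is to construct the desired isomorphism via Dedekind-style recursion: define $f\colon N\to N'$ so that $f(a)=a'$ and $f(\phi(n))=\phi'(f(n))$, and symmetrically $g\colon N'\to N$ with $g(a')=a$ and $g(\phi'(n'))=\phi(g(n'))$. Once these two maps are in hand, a standard chain-induction shows that $\{n\in N:g(f(n))=n\}$ contains $a$ and is a $\phi$-chain, hence equals $N$, with the analogous argument giving $f\circ g=\mathrm{id}_{N'}$; so $f$ is a bijection, and the defining recursions display it as an isomorphism. The essential work is therefore to justify the recursion itself using only the $\phi$-chain apparatus already available --- that is, to prove Dedekind's recursion theorem in this setting.

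To build $f$, I would work inside $N\times N'$. Let $\Phi\colon N\times N'\to N\times N'$ be the map $\Phi(\langle n,n'\rangle)=\langle \phi(n),\phi'(n')\rangle$, and let
\[
F=\bigcap\bigl\{X\subseteq N\times N':\langle a,a'\rangle\in X\text{ and }\Phi[X]\subseteq X\bigr\},
\]
so that $F$ is the smallest $\Phi$-chain containing $\langle a,a'\rangle$. The aim is to show that $F$ is the graph of a total, single-valued function from $N$ to $N'$. Totality follows immediately: $\{n\in N:\exists n'\,\langle n,n'\rangle\in F\}$ contains $a$ and is a $\phi$-chain, hence equals $N$ by clause (2). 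Single-valuedness is the delicate step. Let $A=\{n\in N:\text{exactly one }n'\in N'\text{ satisfies }\langle n,n'\rangle\in F\}$, and verify that $a\in A$ and that $A$ is a $\phi$-chain, so $A=N$. The base case $a\in A$ uses clause (1): if some $\langle a,y\rangle\in F$ with $y\neq a'$ existed, then $F\setminus\{\langle a,y\rangle\}$ would still contain $\langle a,a'\rangle$ and be $\Phi$-closed (since $a$ has no $\phi$-predecessor, no pair in $F$ can map to $\langle a,y\rangle$ under $\Phi$), contradicting the minimality of $F$. The induction step $n\in A\Rightarrow \phi(n)\in A$ uses the injectivity of $\phi'$ to propagate uniqueness from $n$ to $\phi(n)$.

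With $f$ constructed, the symmetric argument produces $g$, and the bijectivity check sketched at the outset completes the proof. The principal obstacle is unquestionably the single-valuedness step --- what Dedekind himself flagged as the central difficulty in his letter to Keferstein --- since this is where both the ``no $\phi$-predecessor of $a$'' clause and the one-one character of $\phi$ and $\phi'$ are put to work in tandem. Once $f$ is known to be a well-defined function satisfying the recursion, the remaining chain inductions verifying bijectivity and preservation of the successor operation are entirely routine.
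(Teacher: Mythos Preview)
Your argument is sound but takes a different route from the paper's sketch (which follows Dedekind's own \S\S 125--132). The paper proceeds by induction on $N$: for each $b\in N$ it builds a partial map $\psi_b$ on the initial segment of $N$ up to $b$, satisfying $\psi_b(a)=a'$ and $\psi_b(\phi(x))=\phi'(\psi_b(x))$; then it sets $\psi(b)=\psi_b(b)$ and checks ontoness by a separate induction on $N'$. You instead define the graph of $f$ in one stroke as the minimal $\Phi$-closed subset of $N\times N'$ containing $(a,a')$, verify it is single-valued, and recover bijectivity from the symmetric construction of $g$ together with the two chain-inductions for $g\circ f$ and $f\circ g$. Your packaging is the cleaner modern one, avoiding any preliminary development of initial segments of $N$; Dedekind's version makes the finite approximations to the isomorphism explicit and needs only one recursion rather than a recursion plus its mirror image.

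One small correction to your sketch: in the induction step for single-valuedness ($n\in A\Rightarrow\phi(n)\in A$), it is the injectivity of $\phi$, not $\phi'$, that carries the load. If $(\phi(n),m')\in F$ with $m'\neq\phi'(n')$, then $F\setminus\{(\phi(n),m')\}$ still contains $(a,a')$ (since $\phi(n)\neq a$) and is still $\Phi$-closed: $\Phi(k,k')=(\phi(n),m')$ would force $k=n$ by injectivity of $\phi$, hence $k'=n'$ since $n\in A$, whence $m'=\phi'(n')$ after all --- contradicting minimality of $F$. Injectivity of $\phi'$ enters only in the mirrored construction of $g$.
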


\begin{proof}
	
Suppose $N$ and $N'$ are as above. To construct an isomorphism between $(N,\phi,a)$ and $(N',\phi',a')$, proceed by induction in $N$: for $b \in N$, define $\psi_b$, an isomorphism from $b$ and its predecessors into $N'$ such that $\psi_b(a)=a'$ and $\psi_b (\phi (x)) = \phi' (\psi_b (x))$ (\S 125). Then let $\psi(b)=\psi_b(b)$ (\S 126), and show, by induction on $N'$, that $\psi$ is onto $N'$.  $\psi$ is the required isomorphism (\S 132).

\end{proof}

\subsection{Does what he did accomplish what he set out to do?}\label{subsec:ded3}  It seems fair to say that Dedekind has achieved his goal of characterizing the natural numbers without appeal to geometric or temporal intuition, but from our contemporary perspective, it's impossible to ignore the freedom with which he allows `different things' to be `considered from a common point of view [and] associated in the mind' so as to form a set (§2, p. 797). Dedekind reports that the dangers of this free use of comprehension came clear to him in 1903 and troubled him so much that he was at first reluctant to allow a third edition of his little book:

\begin{quote}

When I was asked $...$ to replace the second edition of this work (which was already out of print) by a third, I had misgivings about doing so, because in the meantime doubts had arisen about the reliability of important foundations of my conception.  Even today I do not underestimate the importance, and to some extent the correctness, of these doubts.  \cite[p.  796]{02693454}

\end{quote}

\noindent Still, though he was `prevented by other tasks from completing such an investigation', Dedekind remains confident:
\begin{quote}
	My trust in the inner harmony of our logic is not $...$ shattered; I believe that a rigorous investigation of the power of the mind to create from determinate elements a new determinate, their system, that is necessarily different from each of these elements, will certainly lead to an unobjectionable formulation of the foundations of my work.  (p. 796)
\end{quote}

\noindent As we know, he turned out to be right about this. 

Of course, any way of regularizing Dedekind's reasoning to contemporary standards will inevitably distort it in some way.  Two methods appear in the literature -- one set-theoretic, one second-order -- with the latter perhaps more common.  To assess the moral of Dedekind's work for ourselves today, we look at each of these in turn.

The first is fairly direct.  Dedekind's presentation is already formulated in an early set-theoretic context; the only problem is his assumption of what looks perilously close to Unlimited Comprehension.  In his categoricity theorem, this comes into play when he forms the family of all $\phi$-chains containing $a$ ($N$ is its intersection).  In fact, though, this isn't problematic, because all those $\phi$-chains are subsets of $N$, so the relevant family can be generated by Separation from $\mathcal{P} (N)$.  On closer inspection, the relevant intersection can be formulated as ...
$$
\begin{array}{lcl}
	\{x\in N:\forall y((\forall z\in y(z\in N)\wedge a\in y
	\wedge\forall z\in y(s(z)\in y))\to x\in y)\}.
\end{array}$$

\noindent ... so only $\Pi _1^1$-Separation is needed.  Beyond that, we know today that much weaker principles suffice:  in \cite{MR3001547}, Simpson and Yokoyama show that $\mbox{WKL}_0$ (Weak K\"onig's Lemma\footnote{Every infinite binary subtree of $2^{<\omega}$ has an infinite branch.}) suffices over $\mbox{RCA}_0$ (Recursive Comprehension Axiom). This is interesting because $\mbox{WKL}_0$ is weaker than Peano arithmetic itself (i.e. $\mbox{ACA}_0$). 

In the second, more common reading, Dedekind's reasoning is cast, not set-theoretically, but as proving the categoricity of second-order Peano Arithmetic.\footnote{See, e.g., \cite[pp. 82-83]{MR1143781}, \cite[p. 155]{MR3821510}.}  The conditions on simply infinite sets are then replaced by the second-order Dedekind-Peano Axioms, and Theorem 1 becomes:

\begin{theorem}\label{ded2} Suppose $(N, S, 0)\models \PATWO$  and $(N', S', 0')\models \PATWO$, where $N$ and $N'$ are full models.\footnote{\label{full} A second-order model is \textit{full} if it interprets the second-order variables as ranging over the full power set of the domain -- as opposed to a \textit{Henkin model}, which only needs enough subsets to satisfy the axioms of second-order logic (comprehension and choice).} Then there is a bijection $f:N\to N'$ such that
	
	$$\begin{array}{l}
f(0_N)=0_{N'}\\
f(S_N(x))=S_{N'}(f(x))
\end{array}$$
	
\noindent \mbox{that is,} $f:N \cong N'$.

\end{theorem}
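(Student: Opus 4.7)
The plan is to construct the isomorphism $f$ externally, in the set-theoretic metatheory, as the smallest relation satisfying the two recursion clauses, and then to verify its bijectivity by successive applications of the induction axiom in the two full models. The essential ingredient throughout is fullness: because $S$ and $S'$ are the entire power sets of $N$ and $N'$, the second-order induction scheme is automatically available for \emph{every} subset of those domains.

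I would begin by setting
$$f=\bigcap\{A\subseteq N\times N' : (0_N,0_{N'})\in A \text{ and } \forall (x,y)\in A\,((S_N(x),S_{N'}(y))\in A)\}.$$
The defining family is nonempty (take $A=N\times N'$), so $f$ is well-defined and inherits both closure properties by construction; hence, once $f$ is shown to be a function, the equations $f(0_N)=0_{N'}$ and $f(S_N(x))=S_{N'}(f(x))$ fall out immediately.

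Four second-order inductions then finish the job. Totality and single-valuedness come from applying induction in $(N,S,0)$ to $\{x\in N : \exists!\,y\in N'\,((x,y)\in f)\}$, where in each case a would-be ``stray'' pair can be excised from $f$ without destroying the defining closure conditions, contradicting minimality; this step invokes the facts that $0_{N'}$ is not in the range of $S_{N'}$ and that $S_{N'}$ is injective. Surjectivity comes from induction in $(N',S',0')$ applied to $\{y\in N' : \exists x\in N\,(f(x)=y)\}$, whose base and closure clauses match those built into $f$. Injectivity comes from induction in $(N,S,0)$ applied to $\{x\in N : \forall x'\in N\,(f(x)=f(x')\Rightarrow x=x')\}$, again invoking the Dedekind--Peano axioms on the target structure.

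The substantive obstacle, and indeed the whole content of the theorem, is not any single one of these routine inductions but the background condition that makes them applicable: each of the subsets of $N$ or $N'$ displayed above must actually be available as a second-order object in the respective model in order for the induction scheme to fire. For full models this is automatic; for Henkin models (cf.\ footnote~\ref{full}) the relevant subsets can fail to be present, and the categoricity argument collapses -- a contrast whose philosophical weight will matter later in the book.
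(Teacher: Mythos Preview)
Your argument is correct. The paper does not give an independent proof of this theorem; it simply remarks that Dedekind's proof of Theorem~\ref{ded1} ``easily translates'' to the second-order setting. That proof, however, takes a slightly different route from yours: rather than defining the graph of $f$ globally as the least closed subset of $N\times N'$, Dedekind (as the paper presents him, following \S\S125--132 of \cite{02693454}) builds the isomorphism via initial segments---for each $b\in N$ one first constructs a partial isomorphism $\psi_b$ on $b$ and its predecessors, then sets $\psi(b)=\psi_b(b)$, and finally checks surjectivity by induction in $N'$. Your construction is, in effect, Dedekind's own chain idea (Definition~1) applied directly to the product structure $(N\times N',(0_N,0_{N'}),S_N\times S_{N'})$; this is arguably cleaner, since it bypasses the intermediate recursion-theorem step and goes straight to the four verifying inductions. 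The paper's route stays closer to Dedekind's text, which suits the authors' historical and expository aims. Either way, as you rightly emphasize, fullness is exactly what guarantees that the relevant subsets are available for the induction axiom---and that is the point the paper is about to exploit.
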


\noindent  Dedekind's proof easily translates to treat models of $\PATWO$ instead of simply infinite sets, but it still takes place within his set-theoretic background theory.

Though the two readings don't differ in their underlying mathematical content, the contrast between them is worth noting.  In Theorem 1, the conditions on a `simply infinite' set are stated directly:  for example, there is no $x \in N$ such that $S(x)=0$.  In Theorem 2, those same conditions appear indirectly:  in the same example, N satisfies the axiom `there is no x such that $S(x)=0$'. The latter involves semantic notions, while the former does not; as a philosopher might put it, the former uses the set-theoretic conditions, while the latter only mentions them.\footnote{For those unfamiliar with this terminology, the word `cat' is \textit{used} in the sentence: the cat is on the mat.  It's \textit{mentioned} in the sentence: the word `cat' has three letters.\label{um}}  As we'll use the terms, Theorem 1 is a \textit{weakly internal} categoricity argument and Theorem 2 is \textit{external}.\footnote{\textit{Strongly internal} or just \textit{internal} will be introduced and eventually formalized in Subsection~\ref{subsec:par2}.} Despite the apparent preference in the literature for the external reading, the weakly internal reading strikes us as considerably closer to Dedekind's own thought, but either way, the main moral we wish to draw remains:  Dedekind has successfully achieved his goal; he's shown that the natural numbers can be described without appeal to geometric or temporal intuition.

\section{Zermelo in `On boundary numbers and domains of sets' (1930)} \label{sec:zer}

\subsection{What does he set out to accomplish?} Perhaps the first thing to note about the aims of this well-known paper (\cite{zbMATH02562682}) is that Zermelo isn't out to do what many recent observers have imagined; he isn't out to introduce the iterative hierarchy, to argue that the axioms are true therein, and to use this purported fact to motivate or defend ZFC.  In fact, the paper moves in the opposite direction, beginning with a list of the axioms and arguing that any model thereof -- any `normal domain' -- can be stratified into well-ordered ranks, into $V_{\alpha}$'s.  In a subsequent report back to his funding agency on what he's done so far, Zermelo catalogs the benefits of this analysis and proposes, as a distinct continuation, the project of constructing `a set-theoretic model' \cite[p. 441]{Z1}.  A subsequent sketch (\cite{Z2}) indicates what he has in mind -- laying out a pre-theoretic iterative picture and arguing that the axioms are true there -- but his stated goal even then is to establish the consistency of the axioms, not their truth.  So we need to look a bit more closely at his intentions in 
`On boundary numbers' (\cite{zbMATH02562682}).

The only aim mentioned in the opening paragraph of the paper is the resolution of the paradoxes:

\begin{quote}
	it is this sharp distinction between the different models of the ... axiom system that allows us to resolve the `ultrafinite antinomies' \cite[p. 29(401)]{zbMATH02562682}\footnote{The first page reference is to the original publication in \textit{Fundamenta Mathematica}, the second to the English translation in \cite{MR2640544}.} 
\end{quote}

\noindent This was also an explicit aim of his original axiomatization in \cite{zbMATH02639893}:\footnote{Moore \cite[pp.\@159--160]{MR679315} points out that Zermelo intended his early axiomatization to bolster his proof of the well-ordering theorem by clearly isolating the assumptions required.  Ebbinghaus \cite[pp.\@ 76--79]{MR2301067} acknowledges this goal, but places Zermelo's early efforts squarely in the larger context of Hilbert's foundational project, where axiomatization and consistency proofs were central -- obviously a reaction to the paradoxes.  In fact, Zermelo was inclined to withhold publication of his \cite{zbMATH02639893} until he had the desired consistency proof, but Hilbert, out of concern for his young colleague's immediate career prospects, advised him not to wait.  As noted above, Zermelo continued to pursue a consistency proof in the 1930s. }    

\begin{quote}
	In solving the problem [of axiomatization] we must, on the one hand, restrict these principles sufficiently to exclude all contradictions and, on the other, take them sufficiently wide to retain all that is valuable in this theory.  \cite[p. 191(200)]{zbMATH02639893}\footnote{Page references are to the reprinting in \cite{MR2640544}, followed by the page reference in \cite{MR0209111}.}
\end{quote}

\noindent But he was clearly dissatisfied with the resolution given there:

\begin{quote}
	I have not yet been able to prove rigorously that my axioms are `consistent', though this is certainly very essential; instead I have had to confine myself to pointing out now and then that the `antinomies' discovered so far vanish one and all if the principles here proposed are adopted as a basis.  \cite[pp. 191(200-201)]{zbMATH02639893}
\end{quote}
\noindent So at least one goal of  \cite{zbMATH02562682} is to provide a broader and more explanatory treatment of the paradoxes.

It's also worth noting the second clause in the first quotation above from \cite{zbMATH02639893}:  `sufficiently wide to retain all that is valuable in this theory'.  Zermelo's eye is always on the mathematical powers of his assumptions;  in his \cite{zbMATH02643155}, for example -- the `new proof' of well-ordering, companion to \cite{zbMATH02639893} --  he defends the axiom of choice on the basis of its theoretical benefits:

\begin{quote}
	Principles must be judged from the point of view of science [i.e., mathematics], and not science from the point of view of principles fixed once and for all.  \cite[p. 135(189)]{zbMATH02643155}
\end{quote}

\noindent Zermelo takes it as obvious that there is much of mathematical value in infinitary set theory and that axioms can be properly assessed in terms of the mathematical achievements they enable.  (These are 
so-called extrinsic justifications, which Zermelo pioneered.\footnote{For the record, extrinsic considerations weigh  the mathematical benefits of a claim, as opposed to its intrinsic obviousness or connection to the concept of set.  See, e.g., \cite{MR2779203}.\label{intext}}) So we should also expect some discussion of the mathematical consequences of the axioms he considers.\\

\subsection{What does he actually do?}  Like Dedekind, Zermelo works in a pre-theoretic background theory of sets,\footnote{Once again (cf. footnote \ref{Dede}), we set aside question about the metaphysical nature of these sets.} but unlike Dedekind, he doesn't make this explicit.  (The content of that background theory is inferred from his argumentation in what follows.)  He begins, instead, by listing the axioms of the theory he'll be studying.  Extensionality, Separation, Pairing, Power Set, and Union carry over from \cite{zbMATH02639893}.  Infinity is omitted `because it does not belong to ``general" set theory' \cite[p. 30(403)]{zbMATH02562682}, an omission whose rationale emerges only later in the paper.\footnote{See footnote \ref{inf}.}  Choice is now regarded, not as a set-theoretic axiom, but as `a general logical principle upon which our entire investigation is based' (ibid., p. 31 (405)), giving us our first glimpse of what's true in Zermelo's implicit background theory. Finally, the explicit list is supplemented by Replacement,\footnote{Zermelo notes the redundancy this introduces.}  attributed to Fraenkel, and Foundation.  Of course, the property in Separation and the function in Replacement are to be understood as `unrestricted' or `arbitrary', what we now think of as second-order, and Kanamori points out that Foundation, too, must be second-order in the absence of Infinity.\footnote{See \cite[pp. 393-394]{km}.\label{konf}}  This system Zermelo calls $\ZF'$.  

Little is said in defense of this selection of axioms.  This is understandable for the carry-over axioms, as they were treated in  \cite{zbMATH02643155} and \cite{zbMATH02639893}, and perhaps it was fair to assume that the case for Replacement has already been aired in the literature, for example, by Fraenkel and von Neumann.\footnote{Though see footnote \ref{vN}.}  The one comment Zermelo does make concerns the new axiom of Foundation:

\begin{quote}
	This last axiom, which excludes all `circular' sets, all `self-membered' sets in particular, and all `rootless' sets in general, has always been satisfied in all practical applications of set theory, and, hence, does not result in an essential restriction of the theory for the time being.\footnote{It's not clear to us what work Zermelo's many quotation marks are intended to do here and elsewhere, so we pass them by without further comment.}  \cite[ p. 31(403)]{zbMATH02562682}
\end{quote}
 
\noindent So Foundation is benign, at least as far as currently understood, and perhaps there is some intuitive appeal to ruling out `pathological' sets, but the full case in its defense only comes later (see Subsection~\ref{subsec:zer3}).

Zermelo then introduces the central object of his study:  

\begin{quote}
We call `\textit{normal domain}' a domain of `sets' and `urelements' that satisfies our `$\ZF'$-system' with regard to the `basic relation' $a \in b$.  \cite[p. 31(405)]{zbMATH02562682}
\end{quote}

\noindent  To the contemporary ear, this sounds like a model satisfying the theory, but Zermelo isn't working with a syntax/semantics distinction.\footnote{Cf. \cite{MR2301067}, pp. 182-183, describing Zermelo's discussion of `definite properties' in 1929: `there is no sharp distinction between language and meaning.  In fact, Zermelo will never be clear on this point.  His negligence will have a major impact in later discussions with Gödel.  The separation of syntax and semantics or – more adequately – its methodological control became the watershed that separated the area of the ``classic" researchers such as Zermelo and Fraenkel from the domain of the ``new" foundations as developed by younger researchers, among them Gödel, Skolem, and von Neumann'.}  To begin with, `$\in$' isn't to be reinterpreted: the $\in$ of the normal domain is the $\in$ of the background theory.\footnote{From a modern point of view, we can see that if $(M, R) \models \ZF'$, which includes second-order Foundation, then it's isomorphic to the $\in$-model generated by a Mostowski collapse.  So the fact that Zermelo doesn't entertain such reinterpretation is ultimately less significant than it might seem.} The same is true for the second-order quantifiers:  $N \models$ Separation$^2$ (where Separation$^2$ is second-order Separation) allows for sparse Henkin-style interpretations of the second-order quantifier; Zermelo claims, instead, that for every property $P$ and every set $x \in N$, there is a set $y \in N$ containing just the members of $x$ with $P$, where the relevant properties include all subsets of $N$ as understood in the background theory. Or, to take another example,  for $N$ to satisfy Power Set in Zermelo's sense is for $N$ to include $\mathcal{P}(a)$ for every $a \in N$, where that's the real $\mathcal{P}(a)$, as judged from the perspective of the background theory. Zermelo's normal domains are what we'd call \emph{full} models of the second-order axioms.\footnote{Recall footnote \ref{full}.}
 
With the axioms and the notion of normal domain in place, Zermelo begins his study by introducing the von Neumann  ordinals, starting not from the empty set but from an arbitrary urelement:  $u, \{u\}, \{u, \{u\}\}, ... $. Working in the background theory here, he's implicitly relying on Pairing.  Moments later, Union is applied to define the successor of the $\alpha$th $u$-based ordinal, $g_\alpha$, as $g_\alpha \cup \{g_\alpha\}$, but more importantly, this happens in the course of a definition by recursion -- revealing that versions of all the $\ZF'$ axioms, including Replacement, are present in the background theory (with the possible exception of Foundation, see Subsection~\ref{subsec:zer3}). The collection of all ordinals in a given normal domain, $N$, is itself an ordinal, though not an ordinal in $N$.  This ordinal is called the `characteristic' or `boundary number' of $N$, and Zermelo proves that it must be a (strongly) inaccessible cardinal.

From here, the path to the theorem of interest is direct.  It takes one lemma:

\begin{lemma}	
If $N$ is a normal domain, $N' \subseteq N$ with the same urelements, and
\begin{enumerate}
	\item $N'$ is transitive, and
	\item $A \subseteq N'$ and $A \in N \rightarrow A \in N'$,

\end{enumerate}

\noindent then $N'=N$.
\end{lemma}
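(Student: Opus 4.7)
The plan is to show $N\subseteq N'$ by transfinite induction on the rank stratification of $N$, which together with the given containment $N'\subseteq N$ yields $N'=N$. Since $N$ is a normal domain, Zermelo's $u$-based ordinal construction partitions $N$ into cumulative levels $V_\alpha^N$ built over its urelements, and by the (second-order) axiom of Foundation satisfied by $N$ every element of $N$ sits at some such level.

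I would then prove by induction on $\alpha$ that $V_\alpha^N\subseteq N'$. At level zero there is nothing to verify beyond the urelements, which lie in $N'$ by hypothesis. At the successor step, suppose $V_\alpha^N\subseteq N'$ and take $A\in V_{\alpha+1}^N$; then $A\in N$ and $A\subseteq V_\alpha^N\subseteq N'$, so hypothesis (2) delivers $A\in N'$. At a limit $\lambda$, the union $V_\lambda^N=\bigcup_{\alpha<\lambda}V_\alpha^N$ is contained in $N'$ by the inductive hypothesis. Since every element of $N$ lies at some level, this gives $N\subseteq N'$.

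The main obstacle I anticipate is not the induction itself but the machinery behind it: I need the $V_\alpha^N$-stratification actually to exhaust $N$, which requires the second-order strength of Foundation in $N$ together with enough Replacement in the background theory to carry out the transfinite recursion defining the $V_\alpha^N$. Hypothesis (1), the transitivity of $N'$, does not appear explicitly in the argument sketched above, but it plays a clarifying role: it ensures that $N'$ is a genuine sub-domain of the kind that (2) presupposes, so ``$A\subseteq N'$'' is a substantive constraint fitting naturally with the cumulative build-up. An alternative route would apply $\in$-induction in the background theory to an $\in$-minimal element $a$ of $N\setminus N'$, splitting on whether $a$ is an urelement (contradicting the same-urelements hypothesis) or a set; in the latter case (1) is invoked to conclude that every element of $a$ is already in $N'$, so $a\subseteq N'$ and (2) yields $a\in N'$, the desired contradiction.
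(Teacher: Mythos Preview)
Your alternative route—taking an $\in$-minimal element of $N\setminus N'$ via second-order Foundation in $N$—is exactly the paper's argument; the paper remarks that this is where Foundation makes its essential appearance, in a proof by contradiction from the assumption that $N\setminus N'$ is nonempty.

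Your primary approach, however, is circular in the paper's logical order. The lemma is proved precisely so that it can be applied, in the very next theorem, to $N'=\bigcup_{\alpha<\kappa}N_\alpha$ in order to establish that the rank stratification exhausts $N$. So you cannot invoke rank exhaustion while proving the lemma: you would be assuming what the lemma is there to deliver. You could try to prove rank exhaustion independently first, but the natural argument for that is just the $\in$-minimal-element proof specialized to $N'=\bigcup_\alpha N_\alpha$, so the detour through the stratification gains nothing over giving the direct argument for the lemma itself.

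One small correction on your alternative: hypothesis (1), transitivity of $N'$, is not what places the elements of a minimal $a\in N\setminus N'$ into $N'$. Since $a\notin N'$, transitivity of $N'$ says nothing about the members of $a$. What is needed is that those members lie in $N$ (so that $\in$-minimality within $N\setminus N'$ forces them into $N'$), and that comes from $N$ being a normal domain, not from (1). As you already suspected in discussing your first approach, (1) does not visibly do work in the argument.
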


\noindent This is where Foundation (in $N$) makes its essential appearance (in a proof by contradiction from the assumption that $N-N'$ is nonempty).\footnote{This is also where Kanamori \cite[pp. 393-394]{km} locates the second-order axiom. Recall footnote \ref{konf}.} 

Now suppose $N$ is a normal domain with boundary number $\kappa$.   Then
let 
	$$\begin{array}{lcl}
	N_0&=&\mbox{ the urelements of $N$},\\		
	N_{\alpha+1} & =&N_{\alpha} \cup \mathcal{P}(N_{\alpha}),\\	
	N_{\nu}& =&\bigcup_{\beta < \nu} N_{\beta}, \mbox{ for limit $\nu<\kappa$}.

\end{array}$$	
\noindent Given this familiar stratification on $N$ into ranks, all that's left to show is that it exhausts $N$:

\begin{theorem}
	 $\bigcup_{\alpha < \kappa} N_{\alpha} = N$.
\end{theorem}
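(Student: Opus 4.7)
The plan is to show that $N' := \bigcup_{\alpha < \kappa} N_\alpha$ satisfies the three hypotheses of Lemma 1, from which $N' = N$ follows immediately.

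The first two conditions are routine. That $N'$ contains exactly the urelements of $N$ is immediate from the base clause $N_0 = $ urelements of $N$, since no later $N_\alpha$ introduces new urelements. Transitivity of $N'$ I would prove by transfinite induction on $\alpha < \kappa$: any $x \in N_{\alpha+1}$ is either already in $N_\alpha$, to which the inductive hypothesis applies, or a subset of $N_\alpha \subseteq N'$; transitivity is then preserved at limit stages automatically.

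The crux of the argument, and the main obstacle, is the third condition: if $A \in N$ and $A \subseteq N'$, then $A \in N'$. I would introduce, for each $x \in N'$, its \emph{rank} $\rho(x)$, the least ordinal $\alpha$ with $x \in N_\alpha$. Applying Replacement inside $N$ to the function $x \mapsto \rho(x)$ with domain $A$ yields a set $B = \{\rho(x) : x \in A\}$ lying in $N$, each of whose elements is an ordinal less than $\kappa$. Since $B \in N$, so is $\bigcup B$ by Union; but $\bigcup B$ is an ordinal, and every ordinal in $N$ is less than $\kappa$ by the very definition of the boundary number. Setting $\beta := \bigcup B$, we therefore have $\beta < \kappa$ and $A \subseteq N_\beta$, whence $A \in \mathcal{P}(N_\beta) \subseteq N_{\beta+1} \subseteq N'$. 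The essential work here is done by Replacement in $N$ together with the characterization of $\kappa$ as the supremum of the ordinals of $N$ (which packages the regularity embedded in the inaccessibility of $\kappa$).

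With the three conditions verified, Lemma 1 delivers $N' = N$, as desired.
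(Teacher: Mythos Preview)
Your proposal is correct and follows exactly the route the paper indicates: verify the hypotheses of Lemma~1 for $N' = \bigcup_{\alpha<\kappa} N_\alpha$ and conclude. The paper's own proof is terse---it simply asserts that $N' \subseteq N$, that the urelements match, and that ``the proof naturally uses the fact that $\kappa$ is inaccessible''---while you have spelled out the key step (the closure condition via Replacement and the boundary-number property of $\kappa$) in the standard way.
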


\noindent  $\bigcup_{\alpha < \kappa} N_{\alpha} \subseteq N$, and $\bigcup_{\alpha < \kappa} N_{\alpha}$ and $N$ have the same urelements by definition, so the lemma applies.  The proof naturally uses the fact that $\kappa$ is inaccessible.

This sets up the central theorem:

\begin{theorem}\label{ZermeloCatFull}
	If $N$ and $N'$ are normal domains with the same boundary number and equinumerous urelements, then $N \cong N'$.
\end{theorem}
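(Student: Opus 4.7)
The plan is to exploit Theorem 3 (the stratification $\bigcup_{\alpha<\kappa} N_\alpha = N$) to reduce the isomorphism problem to building, rank by rank, compatible bijections between the cumulative hierarchies erected over the two sets of urelements, and then to take a union. Let $\kappa$ be the common boundary number and fix any bijection $f_0 \colon N_0 \to N'_0$ between the urelements, which exists by the equinumerosity hypothesis. The goal is to lift $f_0$ canonically to an $\in$-preserving bijection $f \colon N \to N'$.

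I would define $f_\alpha \colon N_\alpha \to N'_\alpha$ by transfinite recursion on $\alpha < \kappa$ in the background theory. Base: $f_0$ is given. Successor: assuming $f_\alpha$ is an $\in$-preserving bijection, extend to $f_{\alpha+1}$ on $N_{\alpha+1} = N_\alpha \cup \mathcal{P}(N_\alpha)$ by setting $f_{\alpha+1}(x) = f_\alpha(x)$ when $x \in N_\alpha$, and $f_{\alpha+1}(x) = \{f_\alpha(y) : y \in x\}$ when $x \in \mathcal{P}(N_\alpha) \setminus N_\alpha$. Bijectivity of $f_\alpha$ induces the standard bijection between $\mathcal{P}(N_\alpha)$ and $\mathcal{P}(N'_\alpha)$ by forward image, so $f_{\alpha+1}$ is an $\in$-preserving bijection onto $N'_{\alpha+1}$. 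Limit: set $f_\nu = \bigcup_{\beta<\nu} f_\beta$, which is coherent because each $f_{\beta+1}$ extends $f_\beta$. Finally, set $f = \bigcup_{\alpha<\kappa} f_\alpha$; by Theorem 3 applied to both $N$ and $N'$, the domain of $f$ is all of $N$ and its range is all of $N'$.

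The main obstacle is confirming that the recursion really can be carried out as stated. Two points need care. First, we need enough transfinite recursion on ordinals up to $\kappa$ available in the background theory; this is unproblematic, since the very definition of the $N_\alpha$ already used such recursion (and Zermelo has explicitly admitted the corresponding principles above). Second, the recursion must respect the distinction between urelements and sets at every stage, so that an urelement of $N$ is never sent to a set of $N'$, nor conversely. This is secured by the base-case choice $f_0$, together with the fact that in a normal domain the urelements are disjoint from $\mathcal{P}(N_\alpha)$ at every rank; so the case split in the successor step is well-defined. Once these issues are checked, verifying that $f$ is a bijection and preserves $\in$ in both directions amounts to straightforward induction bookkeeping at successor and limit stages, and the conclusion $N \cong N'$ follows.
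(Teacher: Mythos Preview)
Your proposal is correct and follows essentially the same route the paper describes: start from a bijection on urelements and extend it rank by rank through the stratification provided by Theorem~3, taking unions at limits. The paper's own treatment is even terser (it merely says the isomorphism is built ``in stages for each rank $N_\alpha$''), so your more detailed handling of the successor step and the urelement/set distinction is a welcome elaboration rather than a departure.
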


\noindent The proof begins from a bijection of the urelements of $N$ to the urelements of $N'$ and builds an isomorphism from $N$ to $N'$ in stages for each rank $N_{\alpha}$ of $N$ in a way that's now familiar to observers of set-theoretic categoricity arguments. 

Several observations are in order about Zermelo's line of thought here.  To begin with, though he himself uses the term, this isn't really a categoricity theorem in the sense in use today:  `$\in$' isn't given different interpretations in $N$ and $N'$.  In fact, if the urelements are eliminated, the result reduces to the elementary fact, familiar from Set Theory 101, that any $\in$-model of $\ZF^{\hspace{1pt}2}$ is a $V_{\kappa}$, for some inaccessible $\kappa$. If you asked Zermelo, he'd no doubt insist that this is proved in (what we regard as) a second-order background theory, but all that actually  matters is that $N$ and $N'$ satisfy the second-order $\ZF'$ -- ordinary, first-order $\ZFC$ is enough in the background; in fact, ZFC restricted to $\Sigma_2$-separation and $\Sigma_2$-replacement will do (see \cite{MR505489}). 

So, ironically, what's made Zermelo's proof a touchstone in the contemporary literature on categoricity arguments is the presence of those urelements.  Because those of $N$ and $N'$ are only equinumerous, not identical, the isomorphism isn't the identity -- it has to be constructed rank by rank, from the initial bijection.  Though Zermelo doesn't reinterpret `$\in$', he could have done so with essentially the same proof, which is why many commentators\footnote{See, e.g., \cite[pp. 66-67]{MR1029277}, \cite[pp. 178-179]{MR3821510}} take him to have proved the following theorem about second-order $\ZFC$ 
($\ZFCTWO$):

\begin{theorem}\label{ZermeloCat}
If $N \models \ZFCTWO$ and $N' \models \ZFCTWO$, where $N$ and $N'$ are full models with the same height
and equinumerous urelements, then $N \cong N'$.  
\end{theorem}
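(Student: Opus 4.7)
The plan is to reduce Theorem~\ref{ZermeloCat} to Theorem~\ref{ZermeloCatFull} by passing from abstract full models of $\ZFCTWO$ to normal domains in Zermelo's sense via a Mostowski-style collapse carried out in the background theory. First, I would observe that second-order Foundation in the full model $N$ forces its interpreted membership relation to be genuinely well-founded as judged from the background theory, and Extensionality makes it extensional modulo urelements. A standard collapse (available since the background theory supports Replacement) then yields a transitive structure $\bar N$ together with a bijection $\pi : N \to \bar N$ that transports the interpreted membership to the true $\in$; urelements of $N$ are treated as atoms and carried across by the atomic part of $\pi$. The same construction produces $\bar N'$ and $\pi' : N' \to \bar N'$.

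Next, I would verify that $\bar N$ and $\bar N'$ are normal domains in Zermelo's sense. Because $\pi$ is a bijection, fullness of the second-order part transfers, so each axiom of $\ZFCTWO$ holds in $\bar N$ with respect to actual subsets and the genuine power set. The rank stratification of $N$ supplied by the analogue of Theorem~3 is preserved by $\pi$, so the boundary number of $\bar N$ equals the height of $N$; the analogous statement holds for $\bar N'$. Hence $\bar N$ and $\bar N'$ are normal domains with a common boundary number $\kappa$ and equinumerous urelements, and Theorem~\ref{ZermeloCatFull} supplies an isomorphism $h : \bar N \cong \bar N'$. Composition gives $(\pi')^{-1} \circ h \circ \pi : N \cong N'$, as required.

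The step I expect to be most delicate is the handling of urelements in the Mostowski collapse, since the textbook formulation assumes a pure well-founded extensional relation: one must define the collapse only on the non-urelement part of $N$ while carrying the urelements along via a fixed bijection with their copies, and then check that extensionality of the collapse map is not disturbed by this grafting. If one prefers to sidestep this bookkeeping, one may instead imitate the proof of Theorem~\ref{ZermeloCatFull} directly inside the reinterpreted models. Fix a bijection $f_0$ between the urelements of $N$ and $N'$, and by transfinite recursion on $\alpha < \kappa$ extend to $f_\alpha : N_\alpha \to N'_\alpha$ by sending each $s$ of $N$-rank $\alpha+1$ to the unique element of $N'_{\alpha+1}$ whose members (in the interpreted membership of $N'$) are the $f_\alpha$-images of the members of $s$ (in the interpreted membership of $N$); existence and uniqueness follow from fullness and Extensionality of $N'$. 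Equal heights together with fullness on both sides then ensure that $f = \bigcup_{\alpha < \kappa} f_\alpha$ is a bijection of $N$ onto $N'$ commuting with the two interpreted memberships, which is the desired isomorphism.
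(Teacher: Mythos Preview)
Your proposal is correct. The paper itself does not give a detailed proof of Theorem~\ref{ZermeloCat}; it simply remarks that Zermelo ``could have done so with essentially the same proof'' as Theorem~\ref{ZermeloCatFull}, and in a footnote observes that any $(M,R)\models\ZF'$ with second-order Foundation is isomorphic to its Mostowski collapse. Your two routes correspond exactly to these two remarks: your alternative argument (the direct rank-by-rank construction of $f=\bigcup_{\alpha<\kappa}f_\alpha$ inside the reinterpreted models) is what the paper means by ``essentially the same proof,'' while your primary argument (collapse each model to a transitive $\in$-structure, then invoke Theorem~\ref{ZermeloCatFull}) is the footnoted observation made into a formal reduction. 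The collapse route has the virtue of literally reducing Theorem~\ref{ZermeloCat} to Theorem~\ref{ZermeloCatFull} rather than merely reusing its proof idea, at the cost of the urelement bookkeeping you correctly flag as the delicate point; the direct route avoids that bookkeeping but requires redoing the rank-by-rank argument with an interpreted $\in$. Either is adequate, and together they are exactly what the paper gestures at.
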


\noindent Actually, as with Dedekind, it's probably more faithful to see him as having proved Theorem \ref{ZermeloCatFull} in his background theory of sets.  In any case, from there he goes on to observe that if two normal domains have the same urelements, then one is isomorphic to an initial segment of the other, and that normal domains starting from the same single urelement (as $\emptyset$) stack neatly in the order of their boundary numbers.\\

\subsection{Does what he did accomplish what he set out to do?} \label{subsec:zer3} 

To answer this question, we need to follow Zermelo's line of thought a bit further.  Working in his background theory,\footnote{Setting urelements aside.} he's shown that there's effectively one normal domain of each height, that each of these breaks down into ranks -- essentially the theorem $V=\bigcup_{\alpha \in Ord}V_{\alpha}$,\footnote{This notation just means: $\forall x \exists \alpha (x \in V_{\alpha})$.} --
and that they form a hierarchy matching that of their boundary numbers.  These mathematical facts suggest an image of the universe of sets as an endless series of neatly stratified domains.  This image quickly grew into a compelling intuitive picture of emphatic richness,\footnote{This includes the combinatorial ideas of  
\cite{zbMATH02531279}.}, \footnote{\cite{Boolos1971-BOOTIC} is an outlier.  His austere version of the Iterative Conception doesn't include replacement -- which many would regard as the first affirmation, after the axiom of infinity, that the sequence of ordinals is `very long' -- or choice -- which many would regard as an affirmation that the power set is `very thick' (see  \cite{zbMATH02531279}, p. 260).} the Iterative Conception:

\begin{quote}
	We believe that the collection of all ordinals is very `long' and that each power set (of an infinite set) is very `thick'. \cite[p. 553]{wang}
\end{quote}

\noindent This picture suggests new methods like reflection and maximality arguments, new axioms like large cardinal and forcing axioms, along with all manner of elaborations and analyses.\footnote{In a striking example,  \cite[\S 18.3]{MR4352351}, argues that the iterative picture `isn't just a nice extra feature that allows set theory lecturers to draw pretty pictures on the board', rather `it is vital' (p. 402):  `Humans are adapted to build complex tools, which means building up objects from component parts, themselves built from yet simpler components' (p. 403).} Zermelo would insist on judging each resulting mathematical proposal by its extrinsic mathematical merits, and there's little doubt that he'd regard the Iterative Conception in this heuristic use as an impressive source of productive mathematical ideas.

All very good, one might say, but what if there are no boundary numbers? Zermelo addresses this question directly in the final section of the paper, beginning with the observation that the hereditarily finite sets form a normal domain that even the intuitionist would accept.\footnote{\label{inf}Now we see why Zermelo leaves Infinity off the list $\ZF'$.}  The trouble with stopping here, he goes on to note, is that it doesn't permit `Cantorian set theory', and thus would not, in words going back to 1908, `retain all that is valuable in this theory' \cite[p. 192(200)]{zbMATH02639893}.  This was Zermelo's extrinsic justification back then, and it remains so now:  he includes the existence of $\omega$, the boundary number of the normal domain of hereditarity finite sets, and with it the larger normal domain whose boundary number is the first uncountable inaccessible.  Only a domain that satisfies $\ZF'+$Infinity deserves to be called `Cantorian'.

Notice that Zermelo isn't advocating that we add Infinity to $\ZF'$; he's advocating that we add it to the background theory, to the theory that determines which normal domains there are and what they're like.   In that background theory, he claims the right to Cantorian set theory, rejecting the intuitionist's effort to restrict set-theoretic inquiry, while at the same time preserving the normal domain $V_{\omega}$, where constructivist studies with their own extrinsic virtues can still be carried out.\footnote{We abuse notation by using $V_{\omega}$ for the hereditarily finite sets:  it suggests the existence of $\omega$, which, of course, is not present in that normal domain.  We hope that this is understood, that the usage doesn't mislead, here and elsewhere, where $V_{\kappa}$ is used for the normal domain whose boundary number is $\kappa$.} Thus he takes the non-categoricity of $\ZF'$ as an advantage, not a disadvantage:  `set theory as a \textit{science} must ... be developed in greatest generality' \cite[p. 45(427-428)]{zbMATH02562682}.  The italicized `science' is telling. Here, as in his earlier writings, it signifies his commitment to what he sees as productive mathematics.  `The comparative investigation of individual \textit{models}' (ibid.) is then one aspect of this productive project.

Under the banner of this generality, Zermelo also resists efforts to limit set theory to `the \textit{lowest} infinitistic domain', to $V_{\kappa_{1}}$, for $\kappa_{1}$ the first inaccessible.  An axiom to that effect would secure categoricity, but it would not `concern set theory \textit{as such}'; it would `only characterize the special \textit{model} chosen by the respective author' \cite[p. 45(427)]{zbMATH02562682}.  The better course is to acknowledge that the ordinals of $V_{\kappa_{1}}$ constitute a boundary number greater than $\omega$ -- namely, $\kappa_1$ -- and that this uncountable boundary number itself exists as a set in a normal domain larger than $V_{\kappa_{1}}$.  

\begin{quote}
	It is of course not possible to `prove', that is, to deduce from the general $\ZF'$-axioms, either its existence or non-existence, simply because, for instance, the boundary number $\omega$, even though it exists in the `Cantorian' domain [$V_{\kappa_{1}}$], does not exist in the `finitistic' domain [$V_{\omega}$], because, in other words, the question receives different answer in different `models' of set theory, and is thus not decided merely by the axioms alone.  \cite[p. 45(427)]{zbMATH02562682}
\end{quote}

\noindent So, in the interests of productive science, this and further boundary numbers should be posited in the background theory:\footnote{Zermelo also provides a somewhat inscrutable direct argument – what might be regarded as an intrinsic argument (cf. footnote \ref{intext}) – for the existence of inaccessibles based on the assumption that `\textit{every categorically determined domain can also be conceived of as a ``set"} … an element of a (suitably chosen) normal domain' \cite[p. 46 (429)]{zbMATH02562682}.  Given that the categorically determined domains are the $V_{\kappa}$s, this comes to assuming that every $V_{\kappa}$ is a set in a larger $V_{\kappa'}$.}  

\begin{quote}
	We must postulate the \textit{existence of an unlimited sequence of boundary numbers} as a new axiom for the `meta-theory of sets' … To the unlimited series of Cantorian ordinal numbers there corresponds a likewise unlimited … series of essentially different set-theoretic models in each of which the entire classical theory finds its expression.    \cite[p. 47(429-431)]{zbMATH02562682}.
\end{quote}

\noindent The upshot of this line of thought is (1) an informal background theory that essentially comes to  $\ZFC+\mbox{I}$  (where `I' asserts the existence of arbitrarily large inaccessibles), (2) the development in that background theory of detailed development of full models of the second-order theory $\ZF'$, and (3) the heuristic, intuitive picture of the Iterative Conception.

To return, at last, to our leading question, has Zermelo's categoricity theorem (and the developments he bases on it) accomplished what he set out to accomplish?  His explicit goal was to resolve the paradoxes, which he now dismisses in one characteristically combative sentence:

\begin{quote}
	The `ultrafinite antinomies of set theory', to which scientific reactionaries ... appeal in their fight against set theory with such eager passion, are only apparent `contradictions', due only to a confusion between \textit{set theory itself}, which is non-categorically determined by its axioms, and the individual \textit{models} representing it:  what in one model appears as `ultrafinite ... ', is already a fully valid `set' ... in the next higher model and, in turn, serves itself as the bed-stone in the construction of the new domain.  \cite[p. 47(429-431)]{zbMATH02562682}

\end{quote}

\noindent The class of all sets (or all ordinals) of one normal domain is just an ordinary set in the next -- fine -- but this doesn't solve the problem for the background theory:  why is there no set of all sets (or all ordinals) there?  Zermelo doesn't answer this question explicitly, but implicitly, he has a response of a piece with one offered by some observers to this day:  there's no set of all sets because every set occurs in some normal domain, and no normal domain contains all sets.\footnote{He says something very like this in \cite[p. 439]{Z1}: `Every normal domain is itself a ``set" in all higher domains, but there is no highest normal domain containing all \textit{all} normal domains as sets'.}  Today we might simply say there's no set of all sets because every set first appears at some stage of the iterative hierarchy, and there's no stage at which all sets are available to be collected.  Of course, there's no contemporary consensus on the proper treatment of the paradoxes, but Zermelo's approach remains among the live options.

Still, in truth, it would be a grave mistake to count this approach to the paradoxes as the sole payoff to Zermelo's categoricity theorems. In that retrospective report to the funding agency, Zermelo describes his goal more broadly than in `On boundary numbers' itself:
\begin{quote}
	I posed for myself the decisive preliminary question … How does a `domain' of `sets' and `urelements' have to be constituted so that it satisfies the `general' axioms of set theory?  Is our axiom system `categorical' or are there a multitude of essentially different `set-theoretic' models?  \cite[p. 437]{Z1}
	
\end{quote}

\noindent Giving answers to these questions is obviously a leading achievement of his \cite{zbMATH02562682}, but in \cite{Z1}, Zermelo goes further.  We noted earlier that the only defense of Foundation in \cite{zbMATH02562682} is the observation that it `does not result in an essential restriction of the theory for the time being' \cite[p. 31(403)]{zbMATH02562682}; now, in \cite{Z1}, he explicitly acknowledges its indispensable role in the results of that paper:\footnote{Oddly, even in \cite{Z1}, he doesn't acknowledge the decisive role of Replacement in his \cite{zbMATH02562682}.  He remarks that it was Hausdorff's introduction of cofinality `that made possible a fruitful application of the new axiom' \cite[p. 435]{Z1}, omitting reference to von Neumann's use of Replacement for Transfinite Recursion, on which  \cite{zbMATH02562682} obviously draws (cf. \cite[p. 432]{km1}).\label{vN}}

\begin{quote}
	In order to tackle [these goals] successfully, however, I first had to supplement the `Zermelo-Fraenkel axiom system' by adding a further axiom, the `foundation axiom' ... By using the new axiom it was possible to carry out a decomposition into layers … of a `normal domain' ... and to answer the decisive main question in the `isomorphism theorems'.  \cite[p.  437]{Z1}
\end{quote}

\noindent The fact that Foundation enables the clarifying and fruitful breakdown of $V$ into the $V_{\alpha}$s constitutes a strong extrinsic argument in its favor, both as an axiom of $\ZF'$ and as an assumption of the background set theory.

Finally, beyond a viable candidate response to the paradoxes, beyond the clarification and insights arising from the theorem $V=\bigcup_{\alpha \in Ord}V_{\alpha}$ and its surroundings, beyond persuasive defense of the new axioms of Foundation and Inaccessibles, `On boundary numbers' introduced the Iterative Conception, whose mathematical fruitfulness for the future of set theory can hardly be overstated.  In the end, then, Zermelo accomplished more than he set out to do – and ultimately more than he could have realized at the time -- so this application of categoricity arguments must be counted as a resounding success.

\section{Kreisel in `Informal rigor and incompleteness proofs' (1967) and `Two notes on the foundations of set theory'(1969)}\label{sec:kre}

Kreisel's philosophical discussions of categoricity arguments in \cite{Kreisel1969-KRETNO-2} and \cite{Kreisel1967-KREIRA} have continued to influence contemporary thought on the significance of such results, so we pause to ask what conclusions he intends to draw and to what extent he succeeds.  (Since he presents no new mathematics, we dispense with our usual division into subsections.)

Looking back at `On boundary numbers', Zermelo notes in passing one consequence of its categoricity results:

\begin{quote}
	From this it already follows, among other things, that \textit{Cantor's} … conjecture … does \textit{not} depend on the choice of the model, but that it is decided (as true or as false) once and for all by means of our axiom system.  \cite[p.  437]{Z1}\footnote{Zermelo is actually referring to GCH, but here we keep the focus on CH.  The claim is untenable, if not false, for GCH.}
\end{quote}

\noindent Understood as a theorem in Zermelo's background set theory, the claim is that CH has the same truth value in every normal domain,\footnote{Really, every normal domain with the same number of urelements.  Our focus here is on `unit' normal domains, that is, normal domains with one urelement playing the role of $\emptyset$, so from now on, we ignore these niceties.} but Kreisel reformulates it as a fact of second-order logic:

\begin{center}
	$\ZFCTWO \models CH$ or $\ZFCTWO \models \neg CH$
\end{center}

\noindent What conclusion(s) does he draw from this? 

Kreisel is often taken to have argued that this theorem of second-order logic shows that CH has a determinate truth value (even though, sadly, we haven't been able to figure out what that is).\footnote{See, e.g., 
\cite[p. 180]{MR3821510}.}  In response, observers point out that if we're concerned that CH may not have a determinate truth value because we worry that the power set operation isn't fully determinate, then we are, or should be, just as concerned about the determinateness of the second-order quantifiers.\footnote{Weston was first to note this in print (see \cite{MR480033}). Button and Walsh \cite[p. 158, note 17]{MR3821510}  list several others since. (Cf. Quine's assessment that higher-order logic is `set theory in sheep's clothing' \cite[p. 66]{MR0469684}.)  In contrast, Zermelo's set-theoretic proof that that all normal domains agree on CH might as well be carried out in first-order ZFC.}   The resulting assessment is that Kreisel was wrong and no progress on CH has been made.

The surprise is that Kreisel is well aware of this problem, as he remarks in a long endnote:

\begin{quote}
	we still have the following \textit{simple-minded puzzle}: is it not circular to use second order notions which involve the concept of set (or: subset) in axiomatizations of this concept?  \cite[p. 111, endnote 2]{Kreisel1969-KRETNO-2}
\end{quote}

\noindent  This raises serious questions about what conclusions, if any, Kreisel is inclined to draw about the determinateness of CH -- there's much to ponder in this endnote (see below) -- but it's reasonable to assume that his central arguments appear, not there, but in the main text.  

In fact, both articles are entirely explicit about his goal in appealing to the categoricity of $\ZFCTWO$.  Writing in the late 1960s, Kreisel was keen to isolate and articulate the significance of Cohen's recent independence results, as were many others in the set-theoretic and larger intellectual communities.  (Kreisel also intended to improve `logical hygiene'  \cite[p. 109]{Kreisel1969-KRETNO-2} by rebutting various popular misreadings.)  Kreisel has a lot to say about this, some of it admittedly inconclusive, but he emphasizes one clear point in light of Zermelo's result, namely, the importance of 

\begin{quote}
	\textit{Distinctions} formulated in terms of higher order consequence.  In contrast to the example of CH ..., Fraenkel's replacement axiom is not decided by Zermelo's Axioms (because [Zermelo's axioms plus Infinity are] satisfied by [$V_{\omega + \omega}$] and Fraenkel's axiom is not); in particular it is independent of Zermelo's second order axioms while by Cohen's proof, CH is only independent of the \textit{first order schema} (associated with the axioms) of Zermelo-Fraenkel.  \cite[pp. 150-151]{Kreisel1967-KREIRA}
\end{quote}

\noindent Returning to this point from \cite{Kreisel1967-KREIRA} in \cite{Kreisel1969-KRETNO-2}, he puts it this way:

\begin{quote}
	The continuum hypothesis $CH$ is (provably) \textit{not} independent of the full (second order) version of {\sc Zermelo}'s axioms; we know this much without knowing which way $CH$ is decided; the example is not empty since, for instance, the replacement axiom \textit{is} second-order independent.  Needless to say, from the point of view of our present \textit{knowledge} of the hierarchy of sets, it is of great interest to establish \textit{what can and what cannot be decided }from the first order schemas or, as we might put it, \textit{from our present analysis of the full axioms}.  \cite[p. 107]{Kreisel1969-KRETNO-2}
\end{quote}

\noindent In addition to its inherent importance, this demonstration of the distinction between first- and second-order independence also facilitates one of Kreisel's exercises in `logical hygiene':  the popular analogy with the independence of the parallel postulate is misleading because the parallel postulate is independent of second-order geometry; the parallel postulate `corresponds to Fraenkel's axiom, not CH' \cite[p. 151]{Kreisel1967-KREIRA}.\footnote{See also \cite[p. 111, endnote 2.]{Kreisel1969-KRETNO-2}.}

But now, what about the determinateness of CH?  In the endnote quoted above, he's considering the \emph{standard argument} so often attributed to him, an argument to the conclusion -- `subset of the reals' is determinate -- from the premise -- a second-order quantifier over the reals is determinate. If the familiar reading of this paper were correct, we'd expect Kreisel to launch a direct defense against the circularity objection, but he doesn't.  What he says is:

\begin{quote}
	Sure, it would be circular if one were looking for a \textit{reduction}, a definition of this concept [of set (or: subset)] in, say, more elementary terms.  \cite[p. 111, endnote 2]{Kreisel1969-KRETNO-2}
\end{quote}
	 
\noindent Well, yes, something like that is precisely what we're looking for in the standard argument:  we're supposed to have confidence that the second-order quantifier is determinate -- it's just logic, don't you see? -- and that confidence is supposed to be transferred to the notion of subset.\footnote{For comparison, we might be suspicious of the semantic notion of first-order consequence and find that suspicion relieved by the discovery that it's equivalent with a syntactic one (Kreisel's own example).}  A few lines later, he continues -- `to suspect a \textit{vicious} circle we should have to have independent reasons, such as the ambiguities mentioned at the beginning' (ibid.) of the paper, namely concerns about `the basic notion of set' \cite[p. 94]{Kreisel1969-KRETNO-2}.  Well, yes, again, the standard objection points out that someone who begins with worries about the concept of set won't be helped by the standard argument.

So it seems that such defense as Kreisel offers for the determinateness of CH isn't the standard argument, but a direct case against a few of the reasons one might have to worry about the notion of subset, and hence about CH, in the first place.  The two reasons he considers are based on predicativism and finitism, respectively, and he argues that they're both `quite inconclusive' \cite[p. 96]{Kreisel1969-KRETNO-2}.  But surely there are other, more compelling grounds on which to doubt the determinacy of the notion of subset.  

On this score, Kreisel himself doesn't seem entirely confident.  Speaking just epistemologically, about our ability to determine the status of CH, he writes:  

\begin{quote}
	There are a lot of subsets of $\omega$, there are a lot of 1-1 mappings from $\omega$ to an initial segment of the ordinals; we certainly do not \textit{know} ... any listing of the subsets of $\omega$ by means of ordinals.  So why should we expect to \textit{know} the answer to this particular `simple' matter?  \cite[p. 109]{Kreisel1969-KRETNO-2}
	
	Doesn't one simply have an overwhelmingly strong impression that though we don't know how to decide CH, any decision would involve considerations of a quite \textit{different character} from those which have led to existing axioms?  (Ibid., p. 108)
\end{quote}

\noindent He observes that these new considerations needn't be ``arbitrary" or even non-mathematical' (ibid.).  

 He then moves from an epistemic worry that we don't know how to find the truth value of CH to a worry at issue in the standard argument, the worry that its truth value might be metaphysically indeterminate. The response he considers is based on what he calls `realism',\footnote{\label{realism}He leaves many specifics of what he means by this far from univocal term to his reader's imagination.} which he criticizes on exactly this point earlier in the paper:

\begin{quote}
	there \textit{is} or, at least, seems to be a genuine defect in the realist position generally and, in particular, in the unanalyzed use of the power set operation.  \cite[p. 97]{Kreisel1969-KRETNO-2}
\end{quote}

\noindent Given that he links the determinateness of power set with realism and that he takes this aspect of realism to be problematic, it's unlikely that vanquishing predicativism and finitism has left him immune from other worries about the notion of subset -- and we've seen that he doesn't expect such worries to evaporate in the face of the standard argument.

To better understand Kreisel's attitude here, consider another criticism he lodges against realism, this one more methodological than epistemological or metaphysical:

\begin{quote}
	\textit{By the second order decidability of CH, [realism] demonstratively does not provide a framework within which one can discuss the impression} [that considerations of a quite different character would be needed to settle it].  (\cite[p. 108]{Kreisel1969-KRETNO-2}, emphasis in the original)
\end{quote}

\noindent It isn't immediately obvious why he thinks realism can't provide a framework for discussing the need for new methods, but there are clues.  After dismissing the objections from predicativism and finitism, Kreisel tries out a realistic posture himself and examines `the cumulative type structure'.  From that perspective, he finds that `we have very good evidence for some of the axioms of infinity', for example, `when one recognizes that [a candidate] is a consequence of a second-order reflection principle then one has \textit{found} good evidence' \cite[pp. 98-99]{Kreisel1969-KRETNO-2}.  Why is this evidence good?  Because `the validity of the reflection principle becomes apparent on closer analysis of the hierarchy', because `in terms of \textit{knowledge} of the unbounded hierarchy, reflection principles are quite evident' (ibid.) -- and he goes on to give an intrinsic argument based on the unattainability of $V$.  So perhaps he thinks, as it not uncommon, that the only justificatory methods available to the realist are intrinsic, and given that he also thinks progress on CH will require methods of `a quite different character from those which have led to existing axioms' (ibid., p. 108), it would follow that realism hasn't the means to address the fundamental challenge posed by the phenomenon of its independence.

As an alternative to realism, he also considers what he calls `formalism'.\footnote{As with `realism' (see footnote \ref{realism}), the particulars of this position, it's relation to other views of the same name, are left unexplored.}  Presumably a formalist eschews metaphysics, so isn't bound by the idea that only intrinsic considerations can be trusted to track the contours of some abstract subject matter. This leaves open the possibility of new methods, but unfortunately, `the formalist position does not normally \textit{attempt} an analysis of the choice of system at all' \cite[p. 108]{Kreisel1969-KRETNO-2}.  Notice that this isn't a principled argument that a formalist couldn't take this on, just a complaint that his contemporary formalists don't -- the possibility remains open of a version of formalism that does.  Such a formalism might appeal, for example, to mathematical effectiveness in its justifications, to extrinsic justifications -- following Zermelo.\footnote{The first author, predictably, thinks of the Arealism of her \cite{MR2779203} or the Enhanced If-thenism of her \cite{maddy:toappear}.   Kreisel comes close to this theme in a discussion of `pragmatism':  he grants it a `superficial plausibility', but complains that it discourages `work on ... intuitive notions' \cite[pp. 140-141]{Kreisel1967-KREIRA}.  But this needn't be true; e.g., we've already noted the immense heuristic value of Zermelo's Iterative Conception (cf., e.g., \cite[p. 136]{MR2779203}).  Kreisel also worries that a philosopher with such views will be left with nothing left to do, but the first author obviously disagrees.}  In fact, Kreisel sees encouraging  exploration of new methods as an `attempt to use [independence results] for some objectively significant purpose' \cite[p. 110]{Kreisel1969-KRETNO-2}.\footnote{\cite{Kreisel1969-KRETNO-2} closes with an implicit appeal for more such exploration:  `Here it must be admitted that, if CH, resp. $\neg$ CH is to be distinguished from other axioms of set theory by the \textit{kind} of consideration needed to decide it, the crudity of current discussions is certainly not surprising.  For though foundations have made striking progress in the precise analysis of \textit{notions}, the analysis of reasons has been much less successful; and what \textit{has} been done is little known' \cite[p. 110]{Kreisel1969-KRETNO-2}.}

Finally, one last observation from the fascinating endnote 2 of \cite{Kreisel1969-KRETNO-2}. We've seen that Kreisel regards the standard argument from the determinateness of second-order logic to that of CH as potentially viciously circular, but he also locates a benign circle in the vicinity.  We might, he suggests, simply `use a concept in order to \textit{state the facts} about it' \cite[p. 111, endnote 2]{Kreisel1969-KRETNO-2} and illuminate or develop a concept in this way.  Consider, for example, the use of logical particles to advance an account of the satisfaction conditions for sentences containing the logical particles.  He doesn't elaborate, but the idea must be that we better understand the subset operation and the second-order quantifier when we appreciate their interconnection in the case of CH.

 In sum, then, Kreisel sees Zermelo's categoricity arguments as accomplishing at least two things. Since his overarching goal is to probe the significance of the independence results, his main focus is on the first of these: revealing that CH presents a new kind of independence -- first-order but not second-order -- which differentiates it from familiar cases like Replacement, large cardinals, or the parallel postulate.  Along the way, in his analysis of the standard argument for the determinacy of CH, he observes that the categoricity results also provide a sort of elucidation or illumination of the concept of subset, by bringing out its interconnections with second-order quantification.  As for the standard argument as intended, he doesn't appear to take the threat of circularity to be fully dispelled, but the determinateness of CH wasn't his target in the first place.  At his actual goal -- elucidating the independence phenomenon -- he succeeds.

\section{Parsons in `The uniqueness of the natural numbers'  (1990) and `Mathematical induction' (2008)}\label{sec:par}

\subsection{What does he set out to accomplish?}

In Chapter 8 of his book, \textit{Mathematical Thought and its Objects} \cite{MR2381345} -- especially `The problem of the uniqueness of the number structure’ (\S 48) and ‘Uniqueness and communication’ (\S 49) -- Charles Parsons develops and refines the line of thought in his 1990 paper, `The uniqueness of the natural numbers' \cite{10.2307/23350653}.  His topic isn’t the uniqueness of individual natural numbers, like 3; as a structuralist, Parsons holds that various objects, perhaps various sets, for example, could play the role of 3 in the natural number structure.  Rather, his concern is whether

\begin{quote}
	the natural numbers are at least determinate up to isomorphism: If two structures answer equally well to our conception of the sequence of natural numbers, they are isomorphic.  \cite[ §48, p. 272]{MR2381345}
\end{quote}

\noindent A claim like this would be vacuous if there were no structure answering to our conception, but Parsons argues, like Dedekind  before him, that there is at least one such (see below).  As he understands it, ‘the initial, intuitive ground’ \cite[ §48, p. 272]{MR2381345} for concern is that our conception might be vague somehow, raising the possibility that it could be made precise in different ways.

The conception Parsons has in mind is given by collection of rules stating:\\

$$
\begin{array}{c}
N(0) \\ 
\\
N(x)\rightarrow N(S(x))\\
\\
S(0) \neq 0\\
\\
\infer{x=y}{S(x)=S(y)}\\
\\
\infer{P(t)}{P(0) & \infer*{P(S(x))}{P(x)} & N(t)}
\end{array}
$$

\noindent for any predicate $P$ and term $t$.\footnote{See \cite[\S \S 31, 47]{MR2381345}}  An `inescapable vagueness' \cite[\S 47, p. 270]{MR2381345} arises because this characterization involves `induction as an inference that could be made with any well-defined predicate, without the prospect of specifying exactly what the range of such predicates is' (ibid., \S 48, pp. 272-273).\footnote{Parsons often discusses a purported threat of vagueness rooted in the existence of non-standard models, but ultimately concludes that it is `quite unconvincing' \cite[\S 48, p. 279]{MR2381345}.  We happily leave this aside, along with the difficult question of how a structure answering to a conception -- e.g., the Hilbertian stroke types described below -- relates to a model of set-theoretic model theory. The `modelism' of \cite{MR3821510} appears to solve this problem by fiat: `mathematical structures, as discussed informally by mathematicians, are best explicated by … a class of isomorphic models' (p. 38).}  We call this understanding of a predicate variable `open-schematic'.  

The potential problem comes into sharper focus when we attend to Parsons's example of a structure that answers to our conception.\footnote{See \cite[\S \S 28-29]{MR2381345}.  Here and elsewhere, we use Parsons's own locution -- `answers to our conception' -- because we're unsure what would count as a fair paraphrase.    More generally, our goal in this subsection is to describe Parsons's view, not to defend or endorse it.}  He starts from Hilbert \cite{ontheinfinite}:

\begin{quote}
	Following Hilbert we begin by considering the `syntax' of a `language' with a single basic symbol `$|$' (stroke), whose well-formed expressions are just arbitrary strings containing just this symbol, i.e., $|$, $||$, $|||$, … \cite[§28, p. 159]{MR2381345}
\end{quote}

\noindent On Parsons's account, we perceive these concrete tokens of strings.  The string types he calls `quasi-concrete': they are abstract and acausal, but also typically intuited in the process of perceiving their tokens (assuming the perceiver enjoys the modest conceptual resources required).\footnote{Again following Hilbert, Parsons \cite[\S 28, p. 161]{MR2381345} says that `intuition of a type [is] \textit{founded} on perception of a token'.}  Only typically, though, because concrete strings can also be imagined and their types intuited in that way.  

But there are finite limits on the length and number of concrete inscriptions and even on the human capacity to imagine, so how do we get from here to a potentially infinite sequence of types? How do we come to know that `each string of strokes \textit{can} be extended by one more' \cite[\S 29, p. 173]{MR2381345}?  Parsons's answer involves imagining `an arbitrary string of strokes' (ibid.).  As with Locke's idea of a general triangle -- neither equilateral, nor isosceles, nor scalene -- 	

\begin{quote}
imagining an arbitrary string involves imagining a string of strokes without imagining its internal structure clearly enough to imagine a string of $n$ strokes for some particular $n$.  \cite[\S 29, p. 173]{MR2381345}
\end{quote}

\noindent He compares this with imagining a `crowd at a baseball game without imagining a crowd consisting of 34,793 spectators' (ibid., p. 173-174). This will form Parsons's basis:

\begin{quote}
	To see the \textit{possibility} of adding one more, it is only the general structure that we use, and not the specific fact that what we have before us was obtained by iterated additions of one more.  (Ibid., p. 175)
\end{quote}

Building on this idea of a general structure, Parsons offers two ways of justifying the claim that one more can always be added -- involving our perception of figure/ground or of temporal succession\footnote{\label{cog} The first is derived from our perceptual ability to `shift [our] attention so that what was previously ground is now figure' \cite[\S 29, p. 177]{MR2381345} and the other (developing Brouwer's `temporal two-ity') is derived from the fact that `we experience the world as temporal, and have the conviction that we can continue into the proximate future, in which the immediate past is retained' (ibid.).  Whatever Parsons may have intended, he offers no reason to suppose these are anything other than ordinary cognitive mechanisms studied in empirical psychology.}  -- with the upshot that `our insight into [the necessity that every string can be extended] is insight into experienced space and time' (ibid., p. 178).  The Kantian echo is obvious, but notice that the faculties involved in Parsons's Hilbertian intuitive picture\footnote{We call this an `intuitive picture' rather than a `concept' or `structure' to indicate that it's not an epistemic or metaphysical but a psychological pheonemonon.} (as we'll call it) aren't aspects of transcendental psychology or philosophical constructs like `rational intuition'\footnote{Parsons does discuss a form of rational intuition in Chapter 9 of \cite{MR2381345}, but in explicit contrast to the intuition of Chapter 5, the notion engaged here.  (Recall footnote \ref{cog}.)}   but aspects of ordinary human perception and cognition.

Perhaps it's unsurprising that a concept instantiated by such a structure, based in human imagination, would be considered potentially vague in one troublesome sense or other.  Like the imagined baseball crowd, our imaginings are indeterminate in indefinite range of aspects.  (Try imagining a perfect summer day or the Mona Lisa, then ask yourself how many questions you can honestly answer about the features of the resulting `image'.)  As for Parsons's imagined Hilbertian series of strings of strokes, `what if the picture began to flicker in the far distance?' (Wittgenstein, \textit{Remarks on the foundations of mathematics}, \cite[V.10, p. 268]{MR519516}).

What Parsons wants is assurance that nothing like this happens, that our concept of natural number specifies a unique and determinate structure.  Following Michael Dummett, he poses the problem in terms of communication:  how can I be sure that your numbers are like mine?\footnote{\label{dummett1} The only argument for this move – from a question about the determinateness of a structure to a question about communication – that we could find in the two Dummett essays that Parsons cites (\cite{MR157894,Dummett1967-DUMP-2}) appears in the course of his evaluation of a version of Platonism, in particular, one based on an analogy between intuition of abstract structures and ordinary perception.  If our intuition of the structure of natural numbers couldn't be communicated, `this would reduce the intuitive observation of abstract structures to something private and incommunicable – the analog not of observation of the physical world in the normal sense, but of the experience of sense data as conceived by those philosophers who hold these to be private and incommunicable' \cite[p. 210]{Dummett1967-DUMP-2}.  But Parsons's concern isn't with Platonism, much less this particular variety thereof; he's posed his challenge in terms of concepts – e.g., is your understanding of the concept of natural number the same as mine?  – not in terms of reference, as some do – are you talking about the same numbers as I am?  (See, e.g., the `objects-modelism' of \cite[pp. 144-145]{MR3821510}.)  This focus on concepts rather than ontology explains the focus on actual human faculties noted above.}  Maybe I'm working with one precification of our shared vague concept and you're working with another. Or, as Parsons prefers to pose the question, consider two speakers of a first-order language of arithmetic, Kurt and Michael, both of whom embrace the Peano axioms, and then, `we might ask how one could come to know that his ``numbers" are isomorphic to the other's' \cite[\S 49, p. 283]{MR2381345}.  This is the question Parsons hopes to answer by appealing to a categoricity argument. Exactly how a positive answer would bear on the original question of the determinateness of our concept is a topic for Subsection~\ref{subsec:par3}.\footnote{Recall footnote \ref{dummett1}.}

\subsection{What does he actually do (or can be done)?}\label{subsec:par2}

Given this goal, one's first thought is Dedekind's theorem (Subsection~\ref{subsec:ded2}), and that's precisely where Parsons's discussion begins.\footnote{See \cite[\S \S 10-11]{MR2381345}.} The first version of Dedekind's result, Theorem \ref{ded1}, is a weakly internal argument (the description of simply finite systems is used not mentioned\footnote{Recall footnote \ref{um}.}) based on what might as well be a weak first-order set theory.  Parsons would presumably agree to the conclusion Dedekind draws -- that the structure of the natural numbers can be described without appeal to temporal or geometric intuition -- though his  Hilbertian instance of that structure depends on both perceptual and imaginative intuitions.\footnote{Both Dedekind and Parsons appeal to roughly psychological instantiations of their structures -- Dedekind's thoughts of thoughts of thoughts and Parsons's Hilbertian perceived and imagined strokes -- to establish that their structural descriptions aren't vacuous. They differ when Dedekind insists on abstracting (creating) the numbers themselves; Parsons is content to rest with the structure.}   Still, to Parsons's mind, the role of set theory is problematic.  

Early on in \cite{MR2381345}, he writes that `we do not think of elementary number theory as involving commitment to properties, sets, classes, or Fregean concepts' \cite[\S 5, p. 20]{MR2381345}, a sentiment repeated in the later discussion of uniqueness:

\begin{quote}
	
	It is a commonplace in the foundations of mathematics that the idea of natural number is more elementary than that of set. \cite[\S 48, p. 274]{MR2381345}
\end{quote}

 \noindent Presumably, this means that Kurt could believe the Peano axioms while innocent of the concept of set, so this version of Dedekind's argument is of no use to him.\footnote{In response, it might be argued that the concept of finite set is essentially bound up with the concept of natural number, so that Kurt necessarily believes some set theory along with his assumptions about numbers.}  What Parsons wants, then, is a categoricity theorem for arithmetic that relies on no resources beyond what's available to Kurt himself, no resources beyond those of number theory itself -- what might be called a \textit{pure} categoricity argument.\footnote{We allude here to related themes of purity of method in the philosophy of mathematics. For an overview, see, e.g., \cite{Detlefsen2011-DETPOM}.}

The external version of Dedekind's result, Theorem \ref{ded2}, also depends on set theory and thus faces the same impurity objection that Parsons raises to Theorem \ref{ded1}.  Still, the introduction of second-order semantics reminds us that
 $$N \models \PATWO \land N' \models \PATWO \rightarrow N \cong N'$$ 
 \noindent is, in fact, a truth of second-order logic alone. In this form, it's obviously subject to a number-theoretic version of the challenge raised in Section~\ref{sec:kre}, the one Kreisel hasn't fully overcome:  if you're worried about the determinateness of the range of predicates allowed in the open-schematic induction axiom, as Parsons is, then you're also nervous about the determinateness of second-order quantification.  More in line with Parsons's thinking,\footnote{See \cite{MR2381345}, pp. 45-46.} the second-order logical truth can be recast without semantics, as a weakly internal theorem in syntactic second-order logic. Let $\PATWO(X,Y,Z)$ be the second-order Peano axioms formulated for a unary predicate, a unary function symbol, and a constant, and let $\ISO((X,Y,Z), (X',Y',Z'))$ be 
 the second-order sentence claiming that there's an isomorphism between $(X,Y,Z)$ and $(X',Y',Z')$. Then\footnote{\label{BWonP}$\vdash_2$ is deriviability in the usual axiom system of second-order logic:  first-order axioms, introduction and elimination rules for the second-order variables, plus all instances of comprehension and choice (see e.g., \cite[pp. 66-67]{MR1143781}, \cite[p. 34]{MR3821510}.)) Button and Walsh, ibid., pp. 240-241, recommend that Parsons content himself with this result.  The text indicates why Parsons rules this out.}

\begin{theorem}\label{dedpure} 
$\vdash_2 \forall N_1, S_1, 0_1, N_2, S_2, 0_2((\PATWO(N_1, S_1, 0_1) \land \PATWO(N_2, S_2, 0_2))\rightarrow \ISO((N_1, S_1, 0_1), (N_2, S_2, 0_2)))$.
	\end{theorem}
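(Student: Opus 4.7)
The plan is to mimic Dedekind's original categoricity argument, but carried out entirely inside the deductive calculus of second-order logic rather than in a set-theoretic metatheory. Working under the hypotheses $\PATWO(N_1,S_1,0_1)$ and $\PATWO(N_2,S_2,0_2)$, I would exhibit a single binary relation $\psi$ that is provably a bijection from $N_1$ onto $N_2$ sending $0_1$ to $0_2$ and commuting with successor; the $\ISO$-witness then follows by second-order existential introduction, and the outer implication and universal closure by the usual $\to$- and $\forall$-introduction rules.

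The candidate $\psi$ is introduced via the binary-relation comprehension axiom as
$$\psi(x,y) \leftrightarrow \forall R\bigl(\bigl[R(0_1,0_2) \wedge \forall a,b(R(a,b) \to R(S_1(a),S_2(b)))\bigr] \to R(x,y)\bigr).$$
This is the syntactic analogue of Dedekind's ``intersection of all chains'', with the $\forall R$ replacing $\bigcap$. Two facts fall out immediately from the shape of the definition: $\psi(0_1,0_2)$, and $\psi(a,b) \to \psi(S_1(a),S_2(b))$ for all $a,b$. Instantiating $\forall R$ at the two comprehension-defined ``projection'' relations $R(a,b) \equiv N_1(a)$ and $R(a,b) \equiv N_2(b)$ — each closed under the required conditions because $N_1,N_2$ contain their zeros and are closed under their successors by $\PATWO$ — yields $\psi(a,b) \to (N_1(a) \wedge N_2(b))$, confining $\psi$ to $N_1 \times N_2$.

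Next I would establish that $\psi$ is total, single-valued, surjective, and injective, each by an application of second-order induction (an instance of the induction axiom already present in $\PATWO$) to a predicate formed by unary comprehension. Totality on $N_1$ uses $X(x) \equiv \exists y\,\psi(x,y)$, surjectivity on $N_2$ its dual $Y(y) \equiv \exists x\,\psi(x,y)$; the base cases hold by $\psi(0_1,0_2)$ and the induction steps by the successor closure of $\psi$. Single-valuedness requires the standard \emph{restricted-chain} trick: to derive $\psi(0_1,y) \to y = 0_2$, instantiate $\forall R$ at the comprehension-defined $R(a,b) \equiv \psi(a,b) \wedge (a = 0_1 \to b = 0_2)$, whose closedness is verified using the already-derived $\psi(a,b) \to N_1(a)$ together with the Peano axiom $N_1(a) \to S_1(a) \ne 0_1$. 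The induction step uses the analogous $R(a,b) \equiv \psi(a,b) \wedge (a = S_1(x) \to b = S_2(y_0))$, invoking injectivity of $S_1$ on $N_1$. Injectivity of $\psi$ is the dual, by induction on $N_2$.

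The main obstacle is precisely these single-valuedness and injectivity steps. The $\forall R$ in the definition of $\psi$ ranges over \emph{all} binary relations on the ambient domain, not merely those with field inside $N_1 \times N_2$, so the Peano clauses — which are only relativized to $N_1,N_2$ — cannot be applied to arbitrary $a,b$ unless $\psi$'s own range restriction is first spliced into the restricted chain. A second, more bureaucratic care is to ensure that every instance of comprehension needed (for the projection relations, the two restricted chains, and the induction predicates) is indeed available in the formal system whose derivability relation is $\vdash_2$; since that system (as specified in footnote~\ref{BWonP}) contains full impredicative comprehension, no strengthening is required. Once single-valuedness is in hand, $\ISO((N_1,S_1,0_1),(N_2,S_2,0_2))$ is obtained by existentially quantifying $\psi$, and the theorem follows by discharging the two $\PATWO$-assumptions and universally generalizing on the six free letters.
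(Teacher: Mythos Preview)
Your proposal is correct. The paper does not give a detailed proof of this theorem; it merely remarks that ``the proof involves only $\Pi_1^1$-comprehension'' and relies on the earlier sketch of Dedekind's argument (Theorem~\ref{ded1}) as transposable into the syntactic second-order setting.

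There is a mild difference of route worth flagging. The paper's sketch of Dedekind's own proof (for Theorem~\ref{ded1}) proceeds via the recursion theorem: for each $b\in N$ one builds a finite partial isomorphism $\psi_b$ on the predecessors of $b$, sets $\psi(b)=\psi_b(b)$, and then checks surjectivity by induction on the target. Your route instead defines the whole graph in one stroke as the Frege--Dedekind ancestral (the intersection of all successor-closed binary relations containing $(0_1,0_2)$) and then verifies functionality and bijectivity by repeated use of the restricted-chain trick together with induction. Both are standard and both sit comfortably inside $\vdash_2$; your version has the advantage of making the single comprehension instance that does the real work completely explicit. One refinement you could add, to align with the paper's one substantive observation about this theorem: your defining formula for $\psi$ is $\Pi^1_1$, and the auxiliary relations used in the restricted-chain and induction steps are all arithmetical in $\psi$, so in fact only $\Pi^1_1$-comprehension is required --- you invoke full impredicative comprehension, which is more than necessary.
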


\noindent The proof involves only $\Pi_1^1$-comprehension, but Parsons echoes the same concern about even that much:  there's `a question whether the second-order quantifiers ... can have a definite sense' (\cite{MR2381345}, \S 47, p. 270). In particular, even $\Pi_1^1$-comprehension is committed to a well-determined range of predicates of natural numbers.\footnote{Presumably he'd raise an analogous objection to the  set-quantifiers in the weak set-theoretic versions of Dedekind's argument, even apart from the impurity problem.} 

So, given Kurt's epistemic situation, Parsons wants a pure theorem.  Given his objections to second-order logic, either semantic or syntactic, it must also be first-order.  Finally, in light of these two desiderata, any appeal to semantic notions is problematic, so such an argument must be weakly internal.  (Notice that the  preference for internalness isn't for its own sake but as a means toward achieving the other two desiderata.)  To sum up:  the goal is a pure weakly internal argument -- let's call this \textit{strongly internal} or just \textit{internal}\footnote{For the record, the term `internal categoricity' was first used in \cite{Walmsley2002-WALCAI-3}.  After \cite{10.2307/23350653}, the idea occurred more or less explicitly throughout the 1990s and beyond, for example, in \cite{MR1143781}, \cite{MR1304680}, \cite{McGee1997-MCGHWL}, and \cite{Lavine1999}, as well as  \cite{MR3326591}   and \cite{theo.12237}. }  -- that's also first-order. This is the bar Parsons sets himself.

Before asking how Parsons undertakes to do this and how his sketch can be developed, we pause to note some fundamental differences between Dedekind's project and Parsons's -- in the role of a background metaphysics and in motivation.  Both begin with a kind of pre-theoretic metaphysics:  Dedekind his sets and Parsons his structures.  (We haven't and won't attempt to pinpoint their precise ontological status – realistic, idealistic, or otherwise.) Dedekind begins by codifying a few relevant features of his sets and using those assumptions to study something else (the concept of natural number).  In contrast, Parsons never produces an explicit theory of structures -- they remain pre-theoretic, doing no explicit mathematical work -- but they are, nevertheless, the ultimate object of study (in particular, the natural number structure).  On motivation, Dedekind has no doubts about the determinateness of his concept of natural number, he's only interested in what it takes to characterize the relevant structure, while for  Parsons, the whole inquiry arises from his worry that the concept might be vague or otherwise indeterminate.  So, despite the fact that both are focused on the concept of natural number, despite the fact that Parsons appropriates the mathematical core of Dedekind's theorem, their philosophical perspectives and their goals are quite different.

So, how does Parsons hope to pull this off; how is Kurt to prove that his numbers are isomorphic to Michael's using only first-order number-theoretic tools?  We know the first-order Peano axioms aren't enough, but we've ruled out a move to second-order.  Perhaps ironically, Parsons hopes to locate a happy medium between too weak and too strong by exploiting the open-schematic character of the induction rule in his characterization of the concept of natural number, the very feature that led to his worry over vagueness in the first place. 

 The idea is that open-schematic induction takes us beyond the usual first-order axiom, which allows only formulas from a fixed vocabulary, but not far enough to generate a second-order quantifier:

\begin{quote}
	In … referring to arbitrary predicates, the statement of the rule makes no assumption about what counts as a predicate. …  The rule is not a generalization over a given domain of entities and could not be, because it is not determined what predicates will or can be constructed and understood.  … Replacing the rule by an axiom, that is, a single statement expressed by quantification over sets … is not suitable for the explanation of the number concept by rules that we have been engaged in.  (\cite[\S 47, pp. 269-270]{MR2381345})
	
	This understanding of induction implies that the applicability of the rule is not limited to predicates defined in some particular first-order language such as that of first-order arithmetic.  But we must not take it as implying the unavoidability or even the legitimacy of for second-order logic. … second-order logic is not forced on us.  \cite[\S 47, p. 270]{MR2381345}
\end{quote}
Not all observers agree on this last point.  For example, see Shapiro  – `the effect is the same as allowing only initial universal quantifiers over second-order variables' \cite[p. 246]{MR1143781} – or Field  – `so we have an analog, in the schematic first-order case, of assertions of $\Pi_1^1$ second-order sentences' \cite[p. 354]{field1} – or Button and Walsh  – `this is to fall back on (a syntactic fragment of) full second-order logic … our grasp on the idea of \textit{totally} open-ended induction is exactly as precarious as our grasp on \textit{full} second-order quantification' \cite[pp. 240, 163]{MR3821510}.

This is a debate we hope not to engage.  If these observers are wrong and open-schematic induction isn't just a re-description of a weak form of second-order logic, then it's hard to see how a precise mathematical treatment could be given; it would require a formal account of `any well-defined predicate' that avoids `specifying exactly what the range of such predicates is' \cite[\S 48, pp. 272-273]{MR2381345}.  We prefer a different route in pursuit of Parsons's goal:  investigating what unproblematically first-order resources Kurt would need to argue for the desired isomorphism.

So, suppose Kurt speaks of his numbers, his successor function, and his first number, Michael speaks of his, and they believe many of the same arithmetic truths. How can Kurt be sure they're talking about the same structure?  To pose the question more precisely, let $N_1$ and $N_2$ be unary predicate symbols, $S_1$ and $S_2$ unary function symbols, and $0_1$ and $0_2$ constant symbols.\footnote{\label{fn}As it happens, $+$ and $\cdot$ are definable from $0$ and $S$ in second-order $\PA$, but not in first-order $\PA$, so it would make more sense to include them here. We remain faithful to Parsons's presentation for now, but switch to including $+$ and $\cdot$ below, when the question is how to replicate something like second-order internal categoricity in a first-order setting.} Then let $T_1$ be the first order Peano axioms for $(N_1,0_1,S_1)$ with induction formulated as a schema in the vocabulary $\{N_1,0_1,S_1\}\cup \{N_2,0_2,S_2\}$, that is,

 \begin{equation}\label{schema}
	\begin{array}{l}
		(\psi(0_1,\vec{y})\wedge\forall x((N_1(x)\wedge\psi(x,\vec{y}))\to
		\psi(S_1(x),\vec{y})))\to\\
		\qquad\forall x(N_1(x)\to\psi(x,\vec{y})),
	\end{array}
\end{equation}
where $\psi(x,\vec{y})$ is any first-order formula in the vocabulary $\{N_1,0_1,S_1\} \cup \{N_2,0_2,S_2\}$. Let $T_2$ be the corresponding theory for ($N_2,0_2,S_2$).

 Kurt's native language is built on the vocabulary $\{N_1, S_1, 0_1\}$, but his theory of natural numbers is $T_1$, which extends his schematic induction principle to allow vocabulary from Michael's language, built on $\{N_2, S_2, 0_2\}$.  This means that the formula $\psi$ in the induction schema of $T_1$ can include vocabulary beyond the native language of ($N_1,0_1,S_1$) into the language of ($N_2, S_2, 0_2$) and that its quantifiers range beyond $N_1$ to include $N_2$.   This is stronger than what we ordinarily mean by first-order Peano arithmetic; by including more instances of induction, it moves in the direction of second-order $\PA$.  (Obviously, this is also true of Michael, mutatis mutandis.) In addition, Kurt's understanding of Michael entails that he recognizes $T_2$ as holding in that added ontology, so his working theory includes $T_1 \cup T_2$.  
 
 With this added strength, Kurt, arguing in $T_1$, can use his induction schema to prove first-order statements about relationships between the two vocabularies. This raises the possibility that Kurt's $T_1 \cup T_2$ might be enough to produce the isomorphism he's hoping for, an isomorphism $\phi$ between ($N_1, S_1, 0_1$) and ($N_2, S_2, 0_2$).\footnote{In embryonic form, this idea is due to  \cite{MR626358}.}   So, is $T_1 \cup T_2$ enough to prove categoricity?As it happens, the answer is no.\footnote{Button and Walsh make a similar observation in their Proposition 10.4 (\cite{MR3821510}, \S 10.C).}

\begin{remark}
	$T_1 \cup T_2$ cannot produce such a $\phi$.
\end{remark}

\begin{proof}
	Our goal is to construct a  model $M'$  of $T_1\cup T_2$ such that structures $(N_1^{M'},0_1^{M'},S_1^{M'})$ and $(N_2^{M'},0_2^{M'},S_2^{M'})$ are non-isomorphic -- so that Parsons's desired $\phi$ cannot exist.  First, let $M$ be a model of the vocabulary of $T_1\cup T_2$ such  that $M$ is the disjoint sum of two copies of the standard model $(\oN,0,S)$. Thus $(N_1^M,0_1^M,S_1^M)\cong(\oN,0,S)$ 
	and $(N_2^M,0_2^M,S_2^M)\cong(\oN,0,S)$. Then let $(\oN^*,0^*,S^*)$ be a countable non-standard model elementarily equivalent to (but again disjoint from)  $(\oN,0,S)$ and let $M'$ be the disjoint sum of $(\oN,0,S)$ and $(\oN^*,0^*,S^*)$, making it a model of the vocabulary of $T_1\cup T_2$. A simple Ehrenfeucht-Fra\"\i ss\'e-game argument, as in  \cite{MR0360196}, shows that $M\equiv M'$. Since $(\oN,0,S)$ satisfies even the second-order Induction Axiom, $M$ certainly satisfies $T_1$. Respectively, $M$ satisfies $T_2$. Thus $M\models T_1\cup T_2$ and then also $M'\models T_1\cup T_2$. But  $(N_1^{M'},0_1^{M'},S_1^{M'})$ and $(N_2^{M'},0_2^{M'},S_2^{M'})$ are non-isomorphic, as the former is standard and the latter is non-standard. 
\end{proof}

Though this argument demonstrates that $T_1 \cup T_2$ isn't enough to generate the desired $\phi$, it also gives a hint of what more is needed. The two non-isomorphic countable models in the proof are $n$-equivalent for all $n$. For countable structures, isomorphism is equivalent to the existence of a winning strategy for the so-called isomorphism player in the Ehrenfeucht-Fra\"{i}ss\'{e} game of length $\omega$. So the point of the above counter-example is that the models imitate being isomorphic, in the sense that the isomorphism player has a winning strategy, up to $n$ moves of the Ehrenfeucht-Fra\"{i}ss\'{e} game, for every $n$ separately, but if infinitely many moves are allowed, the non-isomorphism player has a winning strategy.  The finite strategies corresponding to different finite lengths of the  game do not cohere. We need a global principle that knits the different finite pieces together, essentially a winning strategy in the infinite game. The strength gained by replacing the usual induction schemas in two copies of first-order $\PA$ with the more generous induction principles of $T_1$ and $T_2$ isn't enough, but there are enhancements that are up to the task.

Parsons's preferred enhancement is primitive recursion.\footnote{See \cite[\S 49, p. 281]{MR2381345}: `in keeping with Skolem's recursive arithmetic, we can introduce by primitive recursion a functor'.}  In particular, he introduces his  $\phi$ by recursion on $N_1$:
$$\left\{\begin{array}{l}
	\phi(0_1)=0_2\\
	\phi(S_1(x))=S_2(\phi(x)).
\end{array}\right.$$
He then reprises Dedekind's proof from Subsection~\ref{subsec:ded2};  to show $\phi$ is one-to-one, he applies $N_1$-induction to $\forall m (\phi(n)=\phi(m) \rightarrow n=m)$; to show $\phi$ is onto $N_2$, he applies $N_2$-induction to $\exists m (\phi(m)=n)$.  (He notes that both instances of induction are first-order.)  Clearly this $\phi$ would be the required isomorphism.  

Unfortunately, Parsons's brief presentation doesn't extend beyond his gesture toward `Skolem's recursive arithmetic', so the challenge is to formulate a first-order internal theorem that captures the required recursion. The standard method involves some version of $\Pi^1_1$-comprehension and a second-order induction axiom -- and, as we know, Parsons's vision requires that second-order logic, however weak, be abandoned altogether.  What he wants is a first-order extension of $T_1 \cup T_2$ that's capable of generating the required isomorphism between Kurt's numbers and Michael's. Precisely what it takes to do this is a difficult technical question because of the way  $\phi$ crosses from one vocabulary to another -- existing results in reverse mathematics deal with one version of arithmetic at a time.  We explore what's known of the mathematical situation, drawing on and extending the second author's work in \cite{MR3326591} (jointly with Wang)  and \cite{theo.12237}. First-order formulations introduce some non-trivial complications, so we  begin with the second-order case as a warm up. 
 
Consider, then, a second-order theory $T_1^2$ in the vocabulary $\{R_1,\ldots,R_n\}$, where  $R_i$ is $r_i$-ary for $1\le i\le n$.\footnote{We assume for simplicity that the vocabulary is relational and finite. Neither constraint is essential. When we refer to constants, such as $0$ in $\PATWO$, we think of them here as unary relations which are singletons.}  Suppose $\{R'_1,\ldots,R'_n\}$ are new relation symbols  such that the arity of $R'_i$ is $r_i$ for $1\le i\le n$, and $T_2^2$ is the theory $T_1^2$ with every occurrence of $R_i$ replaced by $R'_i$, $1\le i\le n$. Let $F$ be a new unary function symbol, $U$ and $U'$ be new unary predicate symbols, and $\Iso(F,U,U')$  the first-order formula which says that $F$ is a bijection between $U$ and $U'$ such that $R_i(a_1,\ldots,a_{r_i})\leftrightarrow R'_i(F(a_1),\ldots,F(a_{r_i}))$ for all $a_1,\ldots,a_n\in U$ and all $i$ with $1\le i\le n$. If $\phi$ is any second-order formula, we let $\phi^{(U)}$ denote the formula obtained from $\phi$ by relativizing all first- and second-order quantifiers to $U$. Finally, for any theory $T$, $T^{(U)}$ denotes  $\{\phi^{(U)}:\phi\in T\}$.  We then formally define:
 
\begin{definition}\label{definition1}The second order theory $T_1^2$
is \emph{internally categorical} if 
\begin{equation}\label{theta}
T_1^{2(U)}\cup T_2^{2(U')}\vdash_2 \exists F\hspace{2pt}\Iso(F,U,U').
\end{equation}

\end{definition}

\noindent $T^{2(U)}_1\cup T^{2(U)}_2$ appears on the left side of the turnstile -- as opposed to the right side, which may seem more natural for categoricity claims -- simply to allow for infinite theories.

Intuitively, $T_1^2$ stipulates some axioms for a relational structure with   the relations $\{R_1,\ldots,R_n\}$, and it does so without semantic notions. Similarly, $T_2^2$ stipulates the same axioms for the relational structure with   $\{R'_1,\ldots,R'_n\}$ as the relations. The sentence $\Iso(F,U,U')$ says that $F$ is an isomorphism between the two relational structures when their domains are restricted to $U$ and $U'$, respectively. The categoricity of $T_1^2$ is obviously equivalent to the statement that  the second-order sentence $\exists F\hspace{2pt}\Iso(F,U,U')$ is true in any full model of $T^{2(U)}_1\cup T^{2(U)}_2$. But (\ref{theta}) says that the categoricity of $T_1^2$ is not just a set-theoretical fact but is, in fact, provable in second-order logic. Semantically, this means that every Henkin model of $T^{2(U)}_1\cup T^{2(U')}_2$ satisfies $\exists F\hspace{2pt}\Iso(F,U,U')$. In other words, if a Henkin model recognizes two models of $T_1^2$, it also recognizes an isomorphism between those two models.

Notice, then, that internal categoricity is more general than our familiar notion of categoricity:

\begin{theorem}
Suppose $T_1^2$ is internally categorical.  Then $T_1^2$ is categorical, i.e. if $M \models T_1^2$ and $M' \models T_1^2$, where $M$ and $M'$ are full models, then $M\cong M'$.

\end{theorem}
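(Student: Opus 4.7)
The strategy is to amalgamate the two given full models $M$ and $M'$ of $T_1^2$ into a single full second-order structure $\mathcal{N}$ satisfying $T_1^{2(U)} \cup T_2^{2(U')}$, and then to apply soundness of $\vdash_2$ (for full semantics) to extract an isomorphism.

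First I would, without loss of generality, assume the domains $|M|$ and $|M'|$ are disjoint (renaming if necessary). Let $D = |M| \cup |M'|$ and define a structure $\mathcal{N}$ with domain $D$ in the combined vocabulary $\{R_1,\ldots,R_n,R'_1,\ldots,R'_n,U,U'\}$ by setting $U^{\mathcal{N}} = |M|$, $U'^{\mathcal{N}} = |M'|$, $R_i^{\mathcal{N}} = R_i^M \subseteq |M|^{r_i} \subseteq D^{r_i}$, and $R_i'^{\mathcal{N}} = R_i^{M'} \subseteq |M'|^{r_i} \subseteq D^{r_i}$. Let $\mathcal{N}$ carry the full interpretation of the second-order quantifiers, so that a $k$-ary relation variable ranges over $\mathcal{P}(D^k)$.

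Next I would verify $\mathcal{N} \models T_1^{2(U)} \cup T_2^{2(U')}$. The key lemma, proved by induction on the complexity of a second-order formula $\phi$ in the vocabulary $\{R_1,\ldots,R_n\}$, is that $\phi^{(U)}$ holds in $\mathcal{N}$ under an assignment of parameters drawn from $U^{\mathcal{N}}$ (with $k$-ary second-order parameters drawn from subsets of $(U^{\mathcal{N}})^k$) if and only if $\phi$ holds in the full model $M$ under the corresponding assignment. The only clause that needs a moment's thought is the second-order quantifier: a relativized $\exists X\,\psi^{(U)}(X)$ with $X$ a $k$-ary relation variable asserts the existence of an $X \subseteq (U^{\mathcal{N}})^k$; since $\mathcal{N}$ is full, the range of such $X$ is exactly $\mathcal{P}(|M|^k)$, which matches the range of the corresponding second-order quantifier in the full model $M$. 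Because $M \models T_1^2$, this gives $\mathcal{N} \models T_1^{2(U)}$, and symmetrically $\mathcal{N} \models T_2^{2(U')}$.

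Finally, by the hypothesis of internal categoricity, $T_1^{2(U)} \cup T_2^{2(U')} \vdash_2 \exists F\,\Iso(F,U,U')$. Since full models are in particular Henkin models in which comprehension and choice hold, $\vdash_2$ is sound with respect to full semantics, so $\mathcal{N} \models \exists F\,\Iso(F,U,U')$. A witness $F$ is a bijection from $U^{\mathcal{N}} = |M|$ onto $U'^{\mathcal{N}} = |M'|$ preserving each $R_i$, hence an isomorphism $M \cong M'$. The only real friction in the argument lies in the inductive verification above: one must check that the full second-order interpretation on the amalgamated domain $D$ restricts correctly, after relativization to $U$, to the full second-order interpretation on $|M|$ (and likewise for $U'$). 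Once this bookkeeping is in place, soundness of $\vdash_2$ closes the argument.
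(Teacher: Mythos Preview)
Your proof is correct and follows essentially the same route as the paper: form the disjoint-union structure $M^*$ on $|M|\cup|M'|$ with $U^{M^*}=|M|$, $U'^{M^*}=|M'|$, verify $M^*\models T_1^{2(U)}\cup T_2^{2(U')}$, and apply internal categoricity (via soundness) to extract the isomorphism. The paper compresses your inductive relativization lemma into a single ``Clearly,'' but the underlying argument is the same.
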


\begin{proof}W.l.o.g., suppose the domains of $M$ and $M'$, which we denote also with $M$ and $M'$, are disjoint. Let $M^*$ be the unique model with $M\cup M'$ as  domain, $U^{M^*}=M$, ${U'}^{M^*}=M'$,  ${R}_i^{M^*}=R_i^{M}$, and ${R'}_i^{M^*}={R}_i^{M'}$ for $1\le i\le n$. Clearly, $M^*\models T_1^{2(U)}\cup T_2^{2(U')}$. Hence by internal categoricity, 
$$(U^{M^*},R^{M^*}_1,\ldots,R^{M^*}_n)\cong({U'}^{M^*},{R'}^{M^*}_1,\ldots,{R'}^{M^*}_n).$$
Now $M\cong M'$ follows. \end{proof}

\noindent In particular, when $T_1^2$ is $\PATWO$, we have

\begin{theorem}[e.g. \cite{MR1029277}]\label{folklore} $\PATWO$ is internally categorical.
\end{theorem}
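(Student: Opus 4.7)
The plan is to formalize Dedekind's classical chain argument (Theorem~\ref{ded1}) entirely inside syntactic second-order logic, so that $\exists F\,\Iso(F,U,U')$ is derived from $T_1^{2(U)}\cup T_2^{2(U')}$ in $\vdash_2$, with no appeal to any set-theoretic background. I write $0,S$ for the constant and successor of $T_1^2$ and $0',S'$ for those of $T_2^2$.

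First, I would use second-order comprehension to introduce the candidate graph of the isomorphism as the intersection of all ``chains'' on the product:
\[
F(x,y)\;\leftrightarrow\;\forall R\bigl[\bigl(R(0,0')\wedge\forall a,b\,(R(a,b)\to R(S(a),S'(b)))\bigr)\to R(x,y)\bigr].
\]
The existence of a relation defined by this $\Pi^1_1$ formula is immediate from comprehension, which is a primitive of $\vdash_2$. From the definition one reads off directly that $F(0,0')$ and that $F(x,y)\to F(S(x),S'(y))$; these will supply the structural clauses of $\Iso$ once $F$ has been shown to be a function.

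Next, I would show that $F$ is a bijection between $U$ and $U'$. Totality, $\forall x\in U\,\exists y\in U'\,F(x,y)$, follows from the second-order induction axiom in $T_1^{2(U)}$ applied to $\exists y\in U'\,F(x,y)$: the base case is witnessed by $F(0,0')$, and the step by the closure clause (using that $U'$ is closed under $S'$). Single-valuedness, $F(x,y)\wedge F(x,y')\to y=y'$, is another induction on $U$; in the base one verifies that the relation $R(a,b):=F(a,b)\wedge(a=0\to b=0')$ itself satisfies the two closure conditions (using the Peano axiom $S(a)\neq 0$ on the unprimed side), so $R$ coincides with $F$ by minimality and $F(0,y)\to y=0'$ follows; the step case handles $S(x)$ in the same style, using injectivity of $S$. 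Surjectivity and injectivity of $F$ are obtained symmetrically by two inductions on $U'$ within $T_2^{2(U')}$, invoking the Peano axioms for $S'$. Combined with the structural clauses already noted, this yields $\Iso(F,U,U')$, and hence $\exists F\,\Iso(F,U,U')$.

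The main obstacle is keeping the entire argument strictly inside $\vdash_2$. Every induction above is applied to a formula built from the \emph{joint} vocabulary $\{0,S,U\}\cup\{0',S',U'\}$, so it is essential that the induction axioms in $T_1^{2(U)}$ and $T_2^{2(U')}$ are genuine second-order sentences rather than schemas restricted to a fixed language; this is precisely where the proof parts company with the failed first-order analogue in the preceding remark, where $T_1\cup T_2$ is insufficient. The other delicate point is that $F$ and the auxiliary relations used in the single-valuedness and injectivity arguments are relations crossing between the two sorts, so at several places one must invoke comprehension over the product $U\times U'$; once this bookkeeping is in place, what remains is a direct syntactic transcription of the chain argument used in Theorem~\ref{ded1}.
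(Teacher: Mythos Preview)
Your proposal is correct; the paper itself supplies no proof of this theorem, citing it as folklore, but the $\Pi^1_1$-comprehension chain argument you outline is precisely the Dedekind-style proof the paper invokes throughout (in Theorem~\ref{ded1}, after Theorem~\ref{dedpure}, and in its account of Parsons's reprisal of Dedekind). The only minor variation is that the paper's sketch of Theorem~\ref{ded1} builds the isomorphism via partial maps $\psi_b$ defined by recursion on $N$ and then glues, whereas you define the full graph $F$ in one stroke as the least relation closed under $(0,0')\mapsto(S,S')$ and then verify functionality and bijectivity by inductions on $U$ and $U'$---but these are two standard packagings of the same idea, and your version is arguably the more natural one in the purely syntactic $\vdash_2$ setting.
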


\noindent Dedekind's original result follows immediately:

\begin{corollary}[Dedekind]
$\PATWO$ is categorical.
\end{corollary}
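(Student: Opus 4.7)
The plan is to obtain this corollary as an immediate consequence of the two results stated just before it, combining them in the obvious way. By Theorem \ref{folklore}, $\PATWO$ is internally categorical in the sense of Definition \ref{definition1}. By the preceding theorem, any internally categorical second-order theory is categorical. Applying the latter with $T_1^2 = \PATWO$ yields exactly the conclusion that any two full models of $\PATWO$ are isomorphic, which is Dedekind's categoricity result.

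To make the reduction fully transparent, the key step one would unpack is the construction used inside the general implication from internal categoricity to categoricity: given two full models $M$ and $M'$ of $\PATWO$, assumed without loss of generality to have disjoint domains, one builds a single structure $M^\ast$ whose domain is $M \cup M'$, interpreting the distinguished unary predicates $U$ and $U'$ as $M$ and $M'$, and the duplicated arithmetic vocabulary $\{0, S\}$ and $\{0', S'\}$ as the corresponding operations of $M$ and $M'$ respectively. Then $M^\ast \models \PATWO{}^{(U)} \cup \PATWO{}^{(U')}$ because the relativizations of all axioms (including the second-order induction axiom and comprehension, read in the full sense) hold in $M^\ast$ by the assumption that $M$ and $M'$ are full models of $\PATWO$.

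Internal categoricity of $\PATWO$ then delivers $\exists F\, \Iso(F, U, U')$ in $M^\ast$; pulling this witness back yields the desired isomorphism between $M$ and $M'$. The only things one needs to check are routine: that relativization to $U$ respects fullness of the second-order part (which it does because $M$ is full by assumption) and that the bijection asserted by $\Iso$ translates directly into an isomorphism of the original structures in the standard signature of $\PATWO$.

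There is no real obstacle: the corollary is essentially a restatement, specialized to $\PATWO$, of the general fact that internal categoricity implies categoricity, combined with Theorem \ref{folklore}. The substantive mathematical content sits in Theorem \ref{folklore}, not here.
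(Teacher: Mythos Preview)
Your proposal is correct and matches the paper's approach exactly: the paper states that Dedekind's result ``follows immediately'' from Theorem~\ref{folklore} together with the preceding theorem that internal categoricity implies categoricity, which is precisely the combination you invoke. Your unpacking of the disjoint-union construction simply rehearses the proof of that preceding theorem and adds nothing beyond what the paper already contains there.
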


\noindent This brings the story of internal categoricity for second-order theories full circle.

Despite the  purity of Theorem~\ref{folklore}, its second-order character obviously rules it out for Parsons's purposes, so let's now see to what extent this general approach can be adapted to a first-order context.  For various reasons that aren't directly relevant here,\footnote{For theories with  first-order schemas arising from  second-order $\Pi^1_1$-axioms, it's clear how to define internal categoricity, but for first-order theories in general the question is open.}  it's not easy to define first-order internal categoricity for general theories $T$.  Instead, since our particular interest is in theories of arithmetic, we explore possibilities for a first-order analog to Definition~\ref{definition1} that's designed specifically for the case of the Peano axioms.  To that end, suppose  the vocabulary of $\PA$ is $\{+,\cdot,0,1\}$.\footnote{Recall footnote~\ref{fn}.}  Let $N_1$ and $N_2$ be unary predicate symbols,  $+_1$ and $+_2$, $\cdot_1$ and $\cdot_2$ binary function symbols, and $0_1$ and $0_2$, $1_1$ and $1_2$ constant symbols.  Then let  $\PA_1(N_1)$ be the first-order theory $\PA$ written in the vocabulary $\{+_1,\cdot_1,0_1,1_1\}$, with the functions $+_1,\cdot_1$ mapping $N_1\times N_1$ to $N_1$, the constants $0_1,1_1$ in $N_1$, but with adjustments reminiscent of $T_1$:  the Induction Schema allows formulas from the larger vocabulary $\{+,\cdot,0,1\} \cup \{+_1,\cdot_1,0_1,1_1\}
\cup \{+_2,\cdot_2,0_2,1_2\}$ and (first-order) quantifiers range over the whole domain, including $N_1 \cup N_2$. Likewise $\PA_2(N_2)$.  

The first, obvious obstacle to a first-order project  is that the basic claim of categoricity -- `there's an isomorphism between $N_1$ and $N_2$'-- can't be formulated in our first-order language.  It might be possible, however, to devise a first-order formula, $\phi(x,y)$, that determines a functional relation of the desired form.  Devising an appropriate $\phi$ involves coding.  Let $\psi(x,u,v)$ say that $x$, using $+$ and $\cdot$, codes an initial segment $I_1$ of $N_1$ ending with $u$, an initial segment $I_2$ of $N_2$ ending with $v$, and a function $f:I_1 \rightarrow I_2$ such that $f(0_1)=0_2$, $f(z+_11_1) = f(z)+_21_2$ for all $z \in I_1$ preceding $u$ in $I_1$, and $f(u)=v$.  Then let $\phi (u,v)$ be $\exists x \psi(x,u,v)$.  Saying that $\phi$ is an isomorphism is straightforward:  let  $\ISO_\phi(N_1,N_2)$ be the first-order formula which says that $\phi$ is a bijection between $N_1$ and $N_2$, and for all $x,y\in N_1$:
\begin{equation}\label{F}
\begin{array}{l}
F(0_1)=0_2\wedge\\
F(1_1)=1_2\wedge\\
F(x+_1y)=F(x)+_2 F(y)\wedge\\
 F(x\cdot_1y)=F(x)\cdot_2 F(y), 
\end{array}
\end{equation}

\noindent where $F(x)$ abbreviates the unique $y \in N_2$ such that $\phi(x,y)$. 

With this machinery in place, a version of categoricity can be proved from a collection of first-order arithmetic assumptions.  With $\phi$ as above:

\begin{theorem}[\cite{theo.12237}]\label{theorem2}First-order $\PA$ is
 \emph{internally categorical} in the sense that
\begin{equation}\label{theta1}
\PA\cup\PA_1(N_1)\cup \PA_2(N_2)\vdash \ISO_\phi(N_1,N_2).
\end{equation}
\end{theorem}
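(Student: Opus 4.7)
The plan is to derive $\ISO_\phi(N_1, N_2)$ inside $\PA \cup \PA_1(N_1) \cup \PA_2(N_2)$ by a sequence of inductions, each exploiting the fact that the induction schemas of $\PA_1(N_1)$ and $\PA_2(N_2)$ admit formulas from the combined vocabulary. The ambient $\PA$ supplies the coding machinery for finite sequences and finite functions, so that $\psi(x,u,v)$ --- which asserts that $x$ codes an initial segment of $N_1$ up to $u$, an initial segment of $N_2$ up to $v$, and a partial homomorphism between them terminating at $(u,v)$ --- is a genuine first-order formula in the combined vocabulary, and $\phi(u,v)\equiv \exists x\,\psi(x,u,v)$ inherits this status.

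First I would establish totality: $\forall u\in N_1\,\exists v\in N_2\,\phi(u,v)$. The base case $\phi(0_1,0_2)$ is witnessed by the code of the trivial approximation pairing $0_1$ with $0_2$. For the successor step, given a code $x$ witnessing $\phi(u,v)$, use the ambient $\PA$ coding to manufacture $x'$ extending $x$ by the single additional pair $(u+_1 1_1,\, v+_2 1_2)$; this yields $\phi(u+_1 1_1,\, v+_2 1_2)$. The induction formula here mentions $N_2$, $+_2$, $1_2$ and ambient quantifiers over codes, all permitted by the enlarged induction schema of $\PA_1(N_1)$. A symmetric induction on $N_2$, using the $\PA_2(N_2)$ schema, gives surjectivity $\forall v\in N_2\,\exists u\in N_1\,\phi(u,v)$.

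Next I would prove a coherence lemma inside the ambient $\PA$: any two codes $x,x'$ of partial homomorphisms agree wherever both are defined. This is an internal $\PA$ induction on the length of the shorter initial segment. With coherence in hand, uniqueness of the $v$ witnessing $\phi(u,v)$ and injectivity of the induced $F$ follow by $N_1$-inductions; together with totality and surjectivity this shows $F$ is a bijection between $N_1$ and $N_2$. Preservation of $0$ and $1$ is immediate from the definition of the codes. Preservation of $+$ is obtained by fixing $x\in N_1$ and inducting on $y\in N_1$ to show $F(x+_1 y)=F(x)+_2 F(y)$, using the recursion clauses for $+_2$ in $\PA_2(N_2)$ and the successor clause built into $\psi$; preservation of $\cdot$ is an analogous induction that calls on the just-proved preservation of $+$.

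The main obstacle will be routing the ambient-$\PA$ coding through the $N_i$-inductions cleanly. In particular, the coherence lemma must be formulated so that it can be invoked as a side premise inside inductions whose running variable is an element of $N_1$ or $N_2$ rather than an ambient code; this is where the enlargement of the induction schemas to the combined vocabulary is indispensable. A related delicacy is that we may not assume $N_1$ or $N_2$ is standard or even downward-closed in the ambient structure --- the argument must rely only on the $\PA_i(N_i)$ axioms and on coding operations applied to codes that happen to exist in the ambient domain. Once those points are handled, the formulas labelled (\ref{F}) fall out of the successive inductions and $\ISO_\phi(N_1,N_2)$ is proved.
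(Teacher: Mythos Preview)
Your proposal is correct and follows essentially the same approach as the paper's proof: ambient $\PA$ is used for the properties of the coding, $N_1$-induction (what the paper calls ``induction on $+_1$'') establishes that $F$ is a well-defined function satisfying the preservation clauses~(\ref{F}), and $N_2$-induction (``induction on $+_2$'') gives surjectivity. Your write-up simply unpacks the paper's terse sketch into its natural constituent lemmas; the one small point is that your coherence lemma is more naturally carried out by $N_1$-induction on $u$ rather than by ambient induction on a length parameter, but this does not affect the argument.
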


\begin{proof}
By induction on $+_1$, we show that $F$ is a function $N_1\to N_2$ and  that it  satisfies the conditions (\ref{F}). We then use induction on $+_2$ to prove that $F$ is onto.  These proofs exploit the fact that induction holds for first-order formulas that have any of the symbols $+,\cdot,0,1,+_1,\cdot_1,0_1,1_1,+_2,\cdot_2,0_2,1_2$. Induction on $+$ is used to establish the necessary properties of the coding.
\end{proof}

Intuitively speaking, from the perspective of $\PA$, the rest of the left-hand side of the turnstyle in (\ref{theta1}) tells us that there are two models $(N_1,+_1,\cdot_1,0_1,1_1)$ and $(N_2,+_2,\cdot_2,0_2,1_2)$ with $N_1, N_2 \subseteq N$, both of which satisfy the axioms of first-order $\PA$. From these assumptions, Theorem~\ref{theorem2} tells us that $\phi$ is an isomorphism between them.  As with second-order internal categoricity (Theorem~\ref{folklore}), this is true even if the model of $\PA$ we're working in is non-standard or uncountable. It's easy to see that the internal categoricity (and thereby the categoricity) of $\PATWO$ follows from the internal categoricity of the first-order $\PA$ in the sense of Theorem~\ref{theorem2}.

Though it comes close, the sense of `internal categoricity' in Theorem~\ref{theorem2} doesn't look quite like that of Definition~\ref{definition1}, largely because of that extra $\PA$ on the left-hand side. But note that, in Definition~\ref{definition1}, the comprehension axioms are built into $\vdash_2$.  This is where the hidden power of second-order comprehension comes into dramatic focus:  it's simply assumed that the range of the second-order quantifier includes an item for every formula in the combined vocabularies of $T^{2(U)}_1\cup T^{2(U)}_2$; this is what generates the required bijection, the required link between the two models.  In the first-order case, where no such implicit background is in place, the extra PA is required to play this role.  (And, for the record, Dedekind's weak set theory does the same in his Theorem \ref{ded1}.) 
 
 Another option is to consider two copies of $\PA$ with the same domain:  $\PA_1^*$ says that the axioms of first-order $\PA$ hold of $+_1,\cdot_1,0_1,1_1$, $\PA_2^*$ says the same for  $+_2,\cdot_2,0_2,1_2$, where both allow induction for first-order formulas in the joint vocabulary $\{+_1,\cdot_1,0_1,1_1\}
\cup \{+_2,\cdot_2,0_2,1_2\}$.  If $\AUT_\phi$  says that $\phi$ defines a permutation of the domain and that it satisfies the preservation clauses in the definition of $\ISO_\phi(N_1,N_2)$, then we get this result:

\begin{theorem}[\cite{MR3984974,theo.12237}]\label{theorem3}First-order Peano arithmetic $\PA$ is
 \emph{internally categorical} in the sense that
\begin{equation}\label{theta2}
\PA^*_1\cup \PA^*_2\vdash \AUT_\phi.
\end{equation}
\end{theorem}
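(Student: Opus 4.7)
The plan is to transfer the coding-and-induction argument of Theorem~\ref{theorem2} into this setting, exploiting the fact that the ambient $\PA$ used there can now be absorbed into either copy $\PA^*_1$ or $\PA^*_2$, since each is a full copy of $\PA$ living on the entire domain. Specifically, I would define $\phi(u,v)$ using the arithmetic of $\PA^*_1$: let $\psi(x,u,v)$ be the first-order formula (in the vocabulary of $\PA^*_1$ together with $+_2, \cdot_2, 0_2, 1_2$) asserting that $x$ codes, via the standard $\beta$-function style coding available in $\PA^*_1$, a finite map from an initial segment of $(N,+_1,\cdot_1,0_1,1_1)$ ending at $u$ to an initial segment of $(N,+_2,\cdot_2,0_2,1_2)$ ending at $v$, which sends $0_1$ to $0_2$ and commutes with successor. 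Set $\phi(u,v) :\equiv \exists x\,\psi(x,u,v)$.

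The argument then proceeds in four induction steps, all of which use the crucial fact that both $\PA^*_1$ and $\PA^*_2$ permit induction schemas over formulas in the joint vocabulary. First, induction along $+_1$ within $\PA^*_1$ shows that for every $x$ in the domain there is a unique $y$ with $\phi(x,y)$, giving a total function $F$; the standard coding properties of $\PA$ ensure the existence and uniqueness of witnesses $x$ in $\psi$. Second, further inductions along $+_1$ establish the preservation clauses: $F(0_1)=0_2$ and $F(1_1)=1_2$ by direct unfolding, $F(x+_1 y)=F(x)+_2 F(y)$ by induction on $y$, and $F(x\cdot_1 y)=F(x)\cdot_2 F(y)$ by induction on $y$ using the additive clause. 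Third, injectivity of $F$ follows by induction along $+_1$ applied to $\forall m\,(F(n)=F(m)\to n=m)$. Fourth, surjectivity is obtained by induction along $+_2$ inside $\PA^*_2$ applied to $\exists n\,(F(n)=m)$, using the already-established preservation of successor. Together these yield $\AUT_\phi$.

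The main obstacle is bookkeeping around the coding rather than anything conceptually new. The choice of $\PA^*_1$ to host the coding means that the inductions involving $F$ as a function are most naturally carried out along $+_1$; but the surjectivity step requires induction along $+_2$, and here one must verify that the joint-vocabulary instances of $\PA^*_2$-induction are strong enough to handle a formula $\exists n\,(F(n)=m)$ whose definition of $F$ is phrased in terms of a $\PA^*_1$-code. This is precisely where the generosity of the induction schemas in $\PA^*_1$ and $\PA^*_2$ — allowing formulas in the full joint vocabulary — does the work that second-order comprehension does in Definition~\ref{definition1} and that the ambient $\PA$ does in Theorem~\ref{theorem2}. Once this is in hand, the argument is essentially the Dedekind-style recursion rerun twice, mirroring the structure of Theorem~\ref{theorem2} but with the codomain of $F$ equal to its domain, so that $\Iso_\phi$ collapses to $\AUT_\phi$.
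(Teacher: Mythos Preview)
The paper does not actually supply a proof of Theorem~\ref{theorem3}; it simply states the result with citations to \cite{MR3984974} and \cite{theo.12237} and moves on to discuss its philosophical significance. So there is no paper-proof against which to make a line-by-line comparison.

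That said, your proposal is sound and is precisely the natural adaptation one would expect: you correctly observe that, because $\PA^*_1$ and $\PA^*_2$ share the full domain, either copy can absorb the role played by the ambient $\PA$ in Theorem~\ref{theorem2}, so the coding machinery can be hosted in $\PA^*_1$ directly. Your four induction steps mirror the sketch the paper gives for Theorem~\ref{theorem2} (induction on $+_1$ for totality and preservation, induction on $+_2$ for surjectivity), and you rightly flag that the joint-vocabulary induction schemas are what make the surjectivity step go through even though $F$ is defined via $\PA^*_1$-codes. This matches the spirit of the paper's remark that the extra $\PA$ in Theorem~\ref{theorem2} is no longer needed here, and it is consistent with how the cited sources proceed.
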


\noindent This mimics Definition~\ref{definition1} more closely:  there's no extra $\PA$ overseeing the two copies $\PA^*_1$ and $\PA^*_2$.  Still, the assumption that the two copies have the same domain (which, incidentally, needn't be countable) makes Theorem~\ref{theorem3} less compelling than Theorem~\ref{theorem2}. Intuitively speaking, we're trying to establish that two models of first-order arithmetic are isomorphic, and (knowing that this is impossible without some extra assumption) we make the task easier by assuming that we are `half way there' in the sense that at least the two models have the same domain.

\subsection{Does what he did (or can be done) accomplish what he set out to do?}\label{subsec:par3}

Does any of this help Kurt determine whether or not Michael's numbers are isomorphic to his own?  In Theorem \ref{theorem2}, he begins from $\PA$ -- not $\PA_1(N_1)$, his own idiosyncratic version --  and views both $\PA_1(N_1)$ and Michael's version from that broader point of view.   It's hard to see how this could honestly be described as Kurt proving, exclusively from his own perspective, that his numbers are isomorphic to Michael's.\footnote{There's also the murkier question of whether Kurt -- whose thinking, we assume, is relentlessly first-order -- could replicate the line of thought that came before Theorem~\ref{theorem2}.  It might be argued that understanding the significance of $\phi$ depends on seeing it as a first-order replacement for the second-order $\exists f$, which would be reason to doubt that a relentlessly first-order Kurt would see its significance, let alone be inspired to think it up.  (Cf. \cite[p. 148]{Kreisel1967-KREIRA}.)  For that matter, how would a relentlessly first-order Kurt pose the question of categoricity to himself in the first place? } Theorem~\ref{theorem3} would return Kurt to his own perspective, but he would need to know in advance that Michael is talking about the same things he is -- though perhaps getting some of their properties and relations wrong -- and it's hard to see what grounds he could have for this belief.\footnote{Obviously, the concern of the previous footnote arises here as well.} So far, at least, the mathematics of internal categoricity hasn't delivered a first-order result that Kurt can deploy, hasn't shown that Parsons can avoid an appeal to second-order logic. This obviously isn't conclusive, but the challenge hasn't yet been met.

On the philosophical aspects of Parsons's effort, consider again the question he set out to answer:  is our shared concept of natural number univocal, or can `two [different] structures answer equally well to our conception of the sequence of natural numbers' \cite[\S 48, p. 272]{MR2381345}?  As we've seen, he then characterizes our shared concept in terms of a set of rules that include the open-schematic version of mathematical induction and offers his  Hilbertian intuitive picture of ever-increasing strings of strokes as an example of a structure that answers to it. As noted earlier, Parsons's account of this picture calls on ordinary human psychology -- perception of figure/ground or temporal succession, `insight into experienced space and time' (ibid., \S 29, p. 178) -- so the number concept it answers to would seem to be one shared by  adult humans beyond the developmental levels of understanding counting (about age 3) and realizing there's no largest number (about age 8).  It's the shared understanding of 1, 2, 3, ...  that we use every day, in contexts both mudane and theoretical.

Now consider some recent psychological findings of Josephine Relaford-Doyle and Rafael N\'{u}\~{n}ez at UCSD (\cite{rd2017,MR3930037,rd2021}).  The experimental subjects were college students, one group drawn from the general population without training in mathematics and another from mathematics majors with at least a B- in a course on proof techniques that included mathematical induction. Subjects were shown a visual argument that the sum of the first $n$ odd numbers is $n^2$:  represent the first odd number, $1$, by one dot on the page.  ($1=1^2$).   Represent the second odd number, $3$, by adding three dots so as to form a $2 \times 2$ square with the original dot in the lower, left-hand corner.  ($1+3=2^2$)  Represent $5$ by adding five dots in the same way so as to form a $3 \times 3$ square, again with the first dot in the lower left-hand corner. ($1+3+5=5^2$).  And so on.  The recursive character of the construction is clear; an induction to all natural numbers is implicit.

Or so it seems.  The experimenters focused on subjects who demonstrated that they understood the statement (the sum of the first $n$ natural numbers is $n^2$) and were willing to generalize to nearby numbers (having gone through the argument for the first five odd numbers, they believed it would hold for seven or eight).  These subjects were then asked a striking question:  might there be a large number for which the statement fails?  The answer seems obvious – surely not! – but that was the reaction of only a third of the mathematically trained respondents and fewer than 10\% of the rest; nearly 40\% of the untrained subjects and 10\% of the trained respondents saw the existence of such a counterexample as plausible.  The thought was that `larger numbers could be … outliers' or `when it gets to really high numbers … it's possible that … maybe it gets kind of fuzzy.  Because at extremes things tend not to work as they do normally' or perhaps most tellingly, `it's too hard to draw a million dots' \cite[p. 14]{rd2021}. One subject, a math and computer science major in the mathematically trained group, made explicit reference to computational limitations: `Large numbers have problems.  In computer science … even if it works on small, medium, most numbers, when you get to the large number, there's not enough space and it doesn't work anymore.  So large numbers are weird cases'  (\cite[p.  248]{MR3930037}).

 Relaford-Doyle and N\'{u}\~{n}ez summarize:
 
 \begin{quote}
 	These responses all suggest that these participants believe that large numbers may have qualitatively different properties than small numbers, such that rules that apply to small numbers may no longer work at larger magnitudes.  This is a reasonable conclusion to draw – in practice there are many differences between small and large numbers: small numbers … are encountered more frequently, have simple numerical notation and lexical structure, and are easier to use in computations.  However, this finding is surprising in that it is in opposition to the widely-held assumption in developmental psychology that `mature' conceptualizations of natural number are consistent with the Dedekind-Peano axioms, in which the entire set of natural number is governed by the same logic.  \cite[p. 14]{rd2021}
 \end{quote}
 
\noindent They  conclude that `many undergraduates may possess non-normative conceptualizations of natural number' \cite[p. 18]{rd2021}.

The subjects' remarks suggest that the `reasonable conclusion' they've drawn is something like:  the diagrammatic reasoning demonstrates the truth of the target claim only for numbers to which it could be applied, numbers for which there could be an array of dots, numbers up to which one could count -- what we might informally call `feasible' numbers. This idea generates a sorites paradox -- 1 is feasible, if $n$ is feasible, so is $n+1$, but $10^{70}$ is not -- and nevertheless plays a leading role in Yessenin-Volpin's strict finitist program in the foundations of mathematics (\cite{MR0295876}) and appears in computer science as `feasible computation' (see, e.g., \cite{WDcom}).  These interconnections have been investigated philosophically by  \cite{Dummett1975-DUMWP} and more recently and formally by Walter \cite{MR3834011}, but our concern here is with the informal notion, the intuitive picture that appears to lie behind the reactions of Relaford-Doyle and N\'{u}\~{n}ez's subjects.  Dean demonstrates the consistency of this picture with a nonstandard model of arithmetic in which an `infinite integer' plays  the role of an unfeasible number.  This isn't to say that these students -- let's call them `feasibilists' -- are referring to a non-standard model.  Theirs is an alternative intuitive picture on a psychological level analogous to that of Parsons's Hilbertian notion,\footnote{Again, we're assuming that Parsons's Hilbertian intuitive picture is straightforwardly psychological, with no transcendental element.  (Recall footnote \ref{cog}.)  That's why we take empirical psychology to be relevant.} not a model in the sense of model theory, and that alternative intuition prompts them to disagree with those hewing to the orthodox concept.

If these preliminary results are replicated and extended, one surprise would be that the developmental story Relaford-Doyle and N\'{u}\~{n}ez allude needs an update:  believing there's no largest number, appreciating the endlessness of the sequence, 1, 2, 3 … , isn't always enough to equip one with the notion of an orthodox omega sequence.\footnote{The first author made something like this 
mistake, e.g.,  in \cite[\S 2]{MR3930026}, though she left open the possibility that our shared notion isn't fully determinate.  Thanks to Jessica Gonzales, Christopher Mitsch, and  Stella Moon (attendees at a seminar offered by N\'{u}\~{n}ez at UCSD)  for calling the work of Relaford-Doyle and N\'{u}\~{n}ez to her attention.},\footnote{This would be particularly interesting developmentally, because it would locate a third and quite different linguistic, or perhaps better social, contribution to the mathematical conception of the natural numbers.  In counting, the initially merely sing-song sequence of number words ends up providing a bridge between two primitive cognitive systems that we share with many other animals (one exact system for small numbers and one approximate system for larger numbers).  After that, coming to realize there's no largest number depends on increasing dexterity with producing ever larger numerical expressions.  This new and final third step would typically involve inculcation with mathematical induction.}  For Parsons's project, the surprise appears to be just what he feared:  that our shared concept of natural number is not univocal; that it admits both feasibilist and more orthodox readings; that it doesn't include full  mathematical induction in the usual sense.  (The feasibilist apparently does think proof by induction works for feasible portion of the number sequence.)  Parsons's emphasis on the centrality of induction suggests that he might react by denying the feasibilist has a concept of natural number; he might restrict his circle of concept-sharing to those who've been trained on and internalized full mathematical induction (or perhaps come by it intuitively).\footnote{Relaford-Doyle and N\'{u}\~{n}ez \cite[p. 1]{rd2021} begins with the observation that `Studies have repeatedly documented students' considerable difficulties in learning mathematical induction'.  Along the way, they note that even some students trained in mathematical induction understand it only at the `procedural' level – that is, as a sort of algebraic recipe – and not at the `conceptual' level that would allow them to recognize induction in the diagrammatic argument.}

This path seems uninviting. The subjects in these experiments are well-educated young people who operate successfully with numbers in their everyday and intellectual lives, so it doesn't seem plausible to claim that they lack the concept of natural number.  Of course, Parsons is within his rights to limit the scope of his inquiry and to use the term `concept of natural number' as he chooses, but his Hilbertian intuitive picture is intended to answer to that concept, and the psychological description of the intuitive picture is clearly rests on our most fundamental thinking about numbers.  The feasibilists among Relaford-Doyle and N\'{u}\~{n}ez's subjects surely perceive figure/ground relations and temporal succession, surely experience space and time, so they must share the picture of the endless series of strings of strokes -- $\|$, $\||$, $\|||$, ... -- with their mathematical betters.  Rather than painting the feasibilist as embracing an alternative to the Hilbertian picture, perhaps a better way to describe the situation is to say that we do all share that picture, but that the picture itself is vague in Parsons's sense, that it can be taken in (at least) two different ways, that it does, in a sense, `flicker in the distance'.  The threat of this kind of psychological divergence isn't the sort of thing susceptible to the proving of theorems, about categoricity or anything else. But the empirical situation remains unsettled.  

To sum up, if this general psychological question about our shared concept of number is what Parsons ultimately cares about, and if the experimental results hold, then his question -- is it univocal? -- would be answered in the negative.  If the experimental results don't hold or if his interest is actually in the narrower question of the more learned concept of number, we're left with the mathematical question of how Kurt can define his isomorphism within the bounds of a first-order internal categoricity theorem -- and the closest known  mathematical results, Theorem~\ref{theorem2} and Theorem~\ref{theorem3}, fall short of that goal.  In the end, there seems room for doubt that our shared concept, Parsons's own Hilbertian intuition of the endless sequence of strokes, is as clear and determinate as we think it is.  And if there is this room for doubt, formal categoricity theorems don't seem to be the kind of thing that might conceivably help.   Given these open questions, both mathematical and philosophical, Parsons's appeal to categoricity arguments to establish `the uniqueness of the natural numbers' can't yet be judged a success.

\section{Button and Walsh in `Categoricity' (2018)}\label{sec:but}

\subsection{What do they set out to accomplish?} \label{subsec:but1} 

As Kreisel points out (Section~\ref{sec:kre}), anyone confident that the quantifiers of second-order logic are determinate should take the second-order version of Zermelo's categoricity theorem, Theorem \ref{ZermeloCatFull}, to show that CH has a determinate truth value, and in fact, various observers make exactly this  inference.\footnote{See, e.g., Shapiro \cite[p. 105]{MR2993116}, Hellman \cite[pp. 70-71]{MR1029277}.}  Timothy Button and Sean Walsh in their 2018 book \textit{Philosophy and Model theory} (\cite{MR3821510}) are not among them.  Part B of that book, `Categoricity', includes an extended exploration of the possible bearing of categoricity on the status of CH, with analyses of both external and internal theorems. For most of the book, they adopt a philosophical perspective they call `modelism', but in Part B, they introduce an alternative, `internalism'.  Their official position is to endorse neither --

\begin{quote}
	We should emphasise right now  … that we are not \textit{advocating} internalism, any more than we were advocating modelism.  Rather, we are presenting internalism in a speculative spirit.  It is a fascinating position, worthy of attention, and we want to develop it as best we can.  (\cite{MR3821510}, p. 223)
\end{quote}

\noindent-- so our goal here is to examine their claims about what would follow if one were to adopt one perspective or the other.

Modelism, then, is a philosophical perspective on the nature of model theory, a branch of mathematics that takes place within set theory.\footnote{See, e.g., \cite{MR3821510}, p. 145:  `Model theory is officially implemented within set theory'.}  The modelist understands model theory as a piece of applied mathematics (cf. fluid dynamics) whose target phenomenon is the semantics of  ordinary mathematical discourse (cf. the behavior of real world substances like water).  More fully, model theory is to provide a workable account of the relations between the natural language of mathematics and a pre-theoretic metaphysics of structures that mathematics refer to (e.g.,  \cite{MR2993116}) or concepts that mathematicians express (e.g.,  \cite{MR1029277}).\footnote{See \cite[\S \S 6.2 and 6.4, respectively]{MR3821510}.  A remark on terminology:  applied mathematicians often describe themselves as providing `models' for worldly phenomena, but it would obviously invite confusion to describe model theory as providing a model of the mathematical portion of natural language.  We avoid this in the text with awkward circumlocutions like the one just employed -- model theory provides a `workable account' -- or it provides a `treatment' or some aspect of mathematical discourse is `idealized to' such and such. The reader is invited to interpret these passages as intending nothing other than `model' in sense of applied mathematics rather than that of model theory.}  In this way, the natural language of actual mathematics is idealized to a formal language $L$ and the structures mathematicians discuss to set-theoretic $L$-structures.\footnote{\label{b&w} Here, and in what follows, we elide the distinction between objects-modelism and concepts-modelism which Button and Walsh treat separately; reformulations of structure-talk into concept-talk should be relatively straightforward.  Also, the modelism in question is `moderate', that is, one that `rejects all appeals to faculties of \textit{mathematical intuition}, or anything similar' and appeals only to `human \textit{faculties} … which could plausibly both have evolved within a species, and also could have developed within an individual creature as it grew from a fetus into an adult' (\cite[p. 42]{MR3821510}).  Finally, these quotations all come in Button and Walsh's treatment of arithmetic rather than set theory, where the issue is the moderate modelist's appeal to the second-order version of Dedekind's categoricity theorem.  When it comes to set theory, where appeal to the second-order version of Zermelo's theorem is what's at issue, they write that `we would re-run all the arguments [from the arithmetic context], occasionally replacing the phrase ``arithmetic" with ``set theory".  Flogging this dead horse would, though, be exhausting and unilluminating.  So we shall pass over the corpse of moderate modelism without further ado' (ibid., pp. 181-182).}  Faced with the modelist's appeal to Zermelo's result in its second-order form (Theorem \ref{ZermeloCat}), Button and Walsh reach the familiar conclusion:

\begin{quote}
	invoking [the second-order theorem] is simply \textit{question-begging}, since the use of the full semantics simply \textit{assumes }precisely what was at issue … in appealing to full second-order logic, the … modelist is simply out of the frying pan, and into another frying pan.  \cite[pp. 159-160]{MR3821510}
\end{quote}

\noindent Internalism is then offered as an alternative perspective, from which internal categoricity theorems might gain better traction.

At the outset, all we're told about internalism is that it's a rejection of modelism and that it `relies heavily on deduction' \cite[p. 223]{MR3821510}.\footnote{The deduction they have in mind is a second-order, what we've been writing as $\vdash_2$.  Again (see footnote \ref{b&w}), the following attempt to characterize internalism is translated from the arithmetic context where it's explicit \cite[chapter 10]{MR3821510}, to the set-theoretic context where it's presumably implicit (ibid., chapter 11).}  Later, the contrast between modelism and internalism is glossed in terms of their respective formalizations of a pre-theoretic claim like `there is a set-theoretic structure':  the former offers `there is an $X$ and a binary relation $R$ on $X$ such that $(X, R) \models \ZFCTWO$', while the latter formulates a second-order formula conjoining the axioms of $\ZFCTWO$ as direct claims about variables $X$ and $R$ and opts for the second-order statement $\exists X \exists R \hspace{2pt}\ZFCTWO(X, R)$.  If we imagine each of these as the antecedent of a categoricity theorem, this happily coincides with our characterization of the external/weakly internal distinction as a property of theorems in Subsection~\ref{subsec:ded3} -- the former involves semantic notions and only mentions the axioms, while the latter involves no semantic notions and uses the axioms directly -- so there's a tight connection between Button and Walsh's internalism and weakly internal categoricity theorems.  

Beyond this, we're told that internalism is not if-thenism -- because the antecedent is asserted unconditionally -- not platonism -- because no stand is taken on the nature of the pre-theoretic metaphysics of structures -- and not logicism -- because it has no epistemological ambitions.\footnote{See \cite[pp. 236-237]{MR3821510}.}  And finally, `internalists need not fear the use of metalanguages ... [they] only oppose \textit{semantic} ascent' \cite[p. 238]{MR3821510} -- so they're perfectly open to proof-theoretic metatheory.   Still, for all this, internalism  remains somewhat underspecified, a point to which we return in Subsection~\ref{subsec:but3}.  

Given that Button and Walsh's project is exploratory, they can't be said to begin with a specific goal in mind, but they do end up with the conclusion that the mathematics they present  ... \footnote{\label{trans}For the record, Button and Walsh have a second, `transcendental argument' for a limited conclusion that a leading argument for the indeterminateness of CH `walks a dangerously thin path between falsity and incoherence' (\cite{MR3821510},  pp. 256).  Since this line of thought doesn't involve categoricity considerations, we leave it aside here.}

\begin{quote}
	... applies pressure to those who regard the continuum hypothesis as \textit{indeterminate}.  Indeed, it is not obvious how best to sustain that attitude.  (\cite{MR3821510}, p. 255)
\end{quote}

\noindent This stops short of a straightforward affirmation that CH has a determinate truth value, despite leaning in that direction.  We take this hedged conclusion as their goal, lay out the mathematics in Subsection~\ref{subsec:but2}, and evaluate its success in achieving that goal in  Subsection~\ref{subsec:but3}.  

\subsection{What do they actually do (or can be done)?}\label{subsec:but2}

We've seen that Button and Walsh take the semantic notions in Zermelo's Theorem \ref{ZermeloCat} to render it ineffective in a defense of the determinateness of CH, so naturally what they now pursue is an internal categoricity theorem.  We've also seen that their internalist is perfectly comfortable with syntactic second-order logic.\footnote{This is why they think Parsons should have settled for Theorem \ref{dedpure} (recall footnote \ref{BWonP}).}  Here they obviously differ from Parsons, a difference rooted in their respective diagnoses of what goes wrong in an attempt to apply Zermelo's theorem to CH in this way:  is the problem second-order logic itself or just the full semantics?  We won't attempt to adjudicate this dispute, but the result is that Parsons sees internalness as an instrument for achieving a first-order result, while Button and Walsh see internalness as an end in itself.  This is why Parsons needs a better version of Theorem \ref{theorem2} or Theorem \ref{theorem3}, while Button and Walsh would be content with an internal theorem for ZF$^2$ that runs parallel to  Theorem~\ref{folklore} for $\PATWO$ -- or rather, they want an internal theorem for $\ZFTWO$ running parallel to a version of  Theorem~\ref{folklore}  that's rephrased as a theorem of pure second-order logic.  

At the risk of pedantry, let's make this explicit.  First, suppose we've adapted the notational conventions laid out before Definition  \ref{definition1}  for the case of $\PATWO$ in the vocabulary $\{S,0\}$ and new symbols $N_1$, $N_2$, $S_1$, $S_2$, $0_1$, $0_2$.  Then, spelling out that definition, Theorem~\ref{folklore} becomes:

\begin{equation}\label{intermediate}
	\PA_1^{2(N_1)}\cup \PA_2^{2(N_2)}\vdash_2 \exists F\hspace{2pt}\Iso(F,N_1,N_2).
\end{equation}

\noindent As was noted in passing in Subsection~\ref{subsec:par2}, Definition \ref{definition1} takes the form it does to allow for infinite theories, so the formulation for the current case can be simplified by exploiting the finiteness of $\PATWO$.   Let $\PAF^{2(N_1)}_1=\bigwedge\{\phi^{(N_1)}(0_1,S_1):\phi(0,S)\in\PATWO\}\cup\{N_1(0_1),\forall x(N_1(x)\to N_1(S_1(x)))\}$ and $\PAF^{2(N_2)}_2=\bigwedge\{\phi^{(N_2)}(0_2,S_2):\phi(0,S)\in\PATWO\}\cup\{N_2(0_2),\forall x(N_2(x)\to N_2(S_2(x)))\}$. 
Then (\ref{intermediate}) becomes:\footnote{We treat the constant symbols $0,0_1,0_2$ as individual variables when they're quantified.}

\begin{theorem}[Theorem~\ref{folklore} restated, Internal categoricity for $\PATWO$]\label{folklore_restated}
$$\vdash_2\forall N_1,0_1,S_1,N_2,0_2,S_2((\PAF^{2(N_1)}_1\wedge\PAF^{2(N_2)}_2)\to \exists F\hspace{2pt}\Iso(F,N_1,N_2)).$$
\end{theorem}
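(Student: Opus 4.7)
The plan is to internalise the chain-intersection construction from the proof of Theorem~\ref{ded1} inside the deductive calculus of second-order logic. Assume, for arbitrary $N_1,0_1,S_1,N_2,0_2,S_2$, the antecedent $\PAF^{2(N_1)}_1 \wedge \PAF^{2(N_2)}_2$. The candidate isomorphism is the graph of the Dedekind-style recursion $F(0_1)=0_2$, $F(S_1(x))=S_2(F(x))$, which I will present directly as the intersection of all binary relations containing $(0_1,0_2)$ and closed under the pair successor. Specifically, by second-order comprehension (at the $\Pi^1_1$-level), introduce
\[
R(x,y) \;\leftrightarrow\; \forall Z\bigl((Z(0_1,0_2) \wedge \forall u \forall v(Z(u,v) \to Z(S_1(u),S_2(v)))) \to Z(x,y)\bigr).
\]
Immediately, $R(0_1,0_2)$ and $R(x,y) \to R(S_1(x),S_2(y))$ follow from the definition.

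The key step is to establish that $R$ is the graph of a total function $F:N_1 \to N_2$. Form, by comprehension, the class $X = \{x \in N_1 : \exists!\, y(N_2(y) \wedge R(x,y))\}$, and apply the relativised second-order induction axiom packed into $\PAF^{2(N_1)}_1$ to $X$. Existence at every stage comes for free from $R(0_1,0_2)$ together with the simultaneous-successor closure of $R$. For uniqueness at the base, instantiate the universal $Z$ in the definition of $R$ with the comprehension-defined predicate $u \neq 0_1 \vee v = 0_2$; this predicate contains $(0_1,0_2)$ and is closed under $(S_1,S_2)$ precisely because $\PAF^{2(N_2)}_2$ forbids $S_2(v)=0_2$, which forces $F(0_1)=0_2$. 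For uniqueness at a successor $S_1(x)$, assume inductively that $F(x)=y$ is unique, and instantiate $Z$ with a comprehension-defined predicate that isolates $(S_1(x),S_2(y))$ among pairs whose first coordinate is $S_1(x)$, using the injectivity of $S_1$ and $S_2$ supplied by the two copies of $\PATWO$. Hence $X = N_1$, so $F$ is a well-defined total function.

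With $F$ in hand, the homomorphism equations $F(0_1)=0_2$ and $F(S_1(x))=S_2(F(x))$ hold by construction. Surjectivity is then established by applying the second-order induction axiom from $\PAF^{2(N_2)}_2$ to the class $\{y \in N_2 : \exists x(N_1(x) \wedge F(x)=y)\}$: the base case is $F(0_1)=0_2$ and the successor step is $F(S_1(x))=S_2(F(x))$. Injectivity is established by applying second-order induction from $\PAF^{2(N_1)}_1$ to $\{x \in N_1 : \forall x' \in N_1(F(x')=F(x) \to x'=x)\}$, leaning on the facts that $0_2$ is never a value of $S_2$ and that $S_2$ is injective. Together these yield $\Iso(F,N_1,N_2)$, and hence $\exists F\,\Iso(F,N_1,N_2)$.

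The main obstacle is not conceptual but hygienic: I must verify that each of the witness predicates $Z$ used in the uniqueness arguments is supplied by a comprehension instance admissible in $\vdash_2$, and that every appeal to induction is genuinely the relativised axiom contained in $\PAF^{2(N_i)}_i$ rather than a semantic shortcut. The whole argument must proceed proof-theoretically, without invoking the external categoricity of $\PATWO$ (Theorem~\ref{ded2})---which would be circular, since that theorem is precisely the semantic shadow of what is being derived internally here.
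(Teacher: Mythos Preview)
Your proposal is correct. The paper does not actually supply a proof of this theorem: Theorem~\ref{folklore} is stated with a bare citation (``e.g.\ \cite{MR1029277}''), and Theorem~\ref{folklore_restated} is explicitly just that result restated with the definition unpacked. What the paper does provide is the surrounding scaffolding---Dedekind's chain construction in Theorem~\ref{ded1}, the remark after Theorem~\ref{dedpure} that ``the proof involves only $\Pi^1_1$-comprehension'', and the footnote recalling that Frege's ancestral ``agrees in \emph{essence} with my notion of chain''---and your argument is exactly the internalisation of that scaffolding: you take the $(S_1,S_2)$-chain of $(0_1,0_2)$ via a $\Pi^1_1$-comprehension instance and then run the two inductions. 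One small difference worth noting: the paper's sketch of Theorem~\ref{ded1} builds finite partial isomorphisms $\psi_b$ on initial segments and glues (Dedekind's \S\S125--126), whereas you define the full graph $R$ in one shot as the minimal closed relation; these are the two standard routes and yours is arguably the cleaner one for a syntactic $\vdash_2$ derivation. Your closing caveat about checking that each witness $Z$ is licensed by comprehension is well taken---the successor-uniqueness step you left slightly informal does go through, with $Z(u,v) \equiv R(u,v) \wedge (u = S_1(x) \to v = S_2(y))$ as the instance, using injectivity of $S_1$ and the inductive hypothesis on $x$.
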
 

\noindent This is the arithmetic result that Button and Walsh hope to imitate for the case of set theory.\footnote{See \cite[p. 228]{MR3821510}.} 

Before leaving arithmetic, though, we should pause to ask precisely how Theorem \ref{folklore_restated} bears on the determinateness of arithmetic claims.  From a modelist's perspective, a categoricity theorem in model theory has implications for the pre-theoretic realm of mathematical structures -- namely, that the theory in question is satisfied by precisely one such structure\footnote{If these pre-theoretic structures can be isomorphic without being identical, then:  precisely one such structure up to isomorphism.} -- and it follows that each of its claims has a determinate truth value.\footnote{On the assumption in the previous footnote:  true in all such structures or false in all such structures – which is to say that each of its claims has a determinate truth value.}  But the internalist posits no such pre-theoretic metaphysics; the internal Theorem \ref{folklore_restated} involves no semantic notions.  As Button and Walsh put it, `\textit{no internal categoricity result can show that a theory pins down a unique $\mathcal{L}$-structure in the model-theorist's sense (even up-to-isomorphism)}' \cite[p. 229, emphasis in the original]{MR3821510}.  But an internal theorem can show what they call `intolerance'.

To see how that goes, consider this immediate consequence of Theorem~\ref{folklore_restated}:

\begin{corollary}\label{folklore_values}Suppose $\phi(0,S)$ is a second-order sentence in the vocabulary $\{0,S\}$. Then \\
$$
\begin{array}{l}
\vdash_2\forall N_1,0_1,S_1,N_2,0_2,S_2((\PAF^{2(N_1)}_1\wedge\PAF^{2(N_2)}_2)\to\\
\hspace{1cm}(\phi^{(N_1)}(0_1,S_1)\leftrightarrow\phi^{(N_2)}(0_2,S_2))).\end{array}$$
\end{corollary}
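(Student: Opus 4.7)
The plan is to derive the corollary as a direct consequence of Theorem~\ref{folklore_restated} together with the familiar fact that isomorphism preserves satisfaction, carried out proof-theoretically in $\vdash_2$. So first I would fix $N_1,0_1,S_1,N_2,0_2,S_2$ and assume the antecedent $\PAF^{2(N_1)}_1\wedge\PAF^{2(N_2)}_2$. Applying Theorem~\ref{folklore_restated} yields $\exists F\hspace{2pt}\Iso(F,N_1,N_2)$, so I would fix a witness $F$ and aim to establish $\phi^{(N_1)}(0_1,S_1)\leftrightarrow\phi^{(N_2)}(0_2,S_2)$.

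The heart of the proof is a meta-induction on the complexity of $\phi$, yielding a separate second-order derivation for each particular sentence $\phi$. More precisely, I would prove the stronger schema: for every second-order formula $\psi(x_1,\ldots,x_k,X_1,\ldots,X_m)$ in the vocabulary $\{0,S\}$, and any $a_1,\ldots,a_k\in N_1$ and $Y_1,\ldots,Y_m\subseteq N_1$,
\[
\psi^{(N_1)}(a_1,\ldots,a_k,Y_1,\ldots,Y_m)\leftrightarrow \psi^{(N_2)}(F(a_1),\ldots,F(a_k),F[Y_1],\ldots,F[Y_m]),
\]
where $F[Y_i]=\{F(a):a\in Y_i\}$ is defined via second-order comprehension. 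The base case for atomic formulas ($x=y$, $S(x)=y$, $X(x)$) is immediate from the homomorphism and bijectivity clauses in $\Iso(F,N_1,N_2)$. Boolean connectives are trivial. For a first-order quantifier $\exists x\in N_1\,\psi$, surjectivity of $F$ onto $N_2$ supplies the witness in one direction and its $F$-preimage in the other. For a second-order quantifier $\exists X\subseteq N_1\,\psi$, a witness $Y\subseteq N_1$ is transferred to $F[Y]\subseteq N_2$ (and conversely $F^{-1}[Z]\subseteq N_1$ for a witness $Z$), where these image sets exist by comprehension applied to the formulas $\exists a\,(a\in Y\wedge F(a)=y)$ and $F(x)\in Z$ respectively. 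Applying this preservation schema to the original $\phi$ (with $k=m=0$, treating the constant symbols as variables instantiated to $0_1$ and $0_2$) yields the conclusion.

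The main obstacle is not a conceptual one but a bookkeeping one: the argument is a schema, and must be carried out uniformly in $\vdash_2$ for each $\phi$, with the second-order quantifier case requiring explicit use of comprehension to form $F[Y]$ and $F^{-1}[Z]$ inside the deduction. One must also be careful with the relativizations, verifying that the quantifier clauses in $\PAF^{2(N_1)}_1$ and $\PAF^{2(N_2)}_2$ (which assert closure of $N_1$ and $N_2$ under $S_1$ and $S_2$) combine with $\Iso$ to guarantee that the relevant witnesses stay in the intended domain. Since $\vdash_2$ includes comprehension and choice, all of this is available, and the induction goes through without additional assumptions.
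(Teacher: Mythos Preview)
Your proposal is correct and matches the paper's approach: the paper simply states this corollary as ``an immediate consequence of Theorem~\ref{folklore_restated}'' with no further argument, and what you have written is precisely the standard unfolding of that phrase---obtain the isomorphism $F$ from Theorem~\ref{folklore_restated} and then run the meta-induction on $\phi$ showing that isomorphisms preserve satisfaction, using comprehension for the second-order quantifier step. There is nothing to add.
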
 

\noindent Button and Walsh's `intolerance' follows:  

\begin{theorem}[\cite{MR3821510}]\label{intolerance}\emph{(`Intolerance' of $\PATWO$)} Suppose $\phi(0,S)$ is a second-order sentence in the vocabulary $\{0,S\}$. Then 
$$\vdash_2 \forall N,0,S(\PAF^{2(N)}\to\phi^{(N)}(0,S))\vee\forall N,0,S(\PAF^{2(N)}\to\neg\phi^{(N)}(0,S))).$$
\end{theorem}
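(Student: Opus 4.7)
The plan is to derive Theorem \ref{intolerance} as an essentially immediate consequence of Corollary \ref{folklore_values}, using nothing beyond classical propositional reasoning and the standard introduction and elimination rules for second-order quantifiers already available in $\vdash_2$. The key observation is that the corollary is an \emph{agreement principle}: any two models of $\PAF^2$ assign $\phi$ the same truth value. Intolerance is just the contrapositive repackaging of that fact: since no pair of $\PAF^2$-models can disagree on $\phi$, one of the two universal statements in the disjunction must hold uniformly.

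More concretely, I would reason classically by contradiction inside $\vdash_2$. Abbreviate $\psi_+ := \forall N,0,S(\PAF^{2(N)} \to \phi^{(N)}(0,S))$ and $\psi_- := \forall N,0,S(\PAF^{2(N)} \to \neg\phi^{(N)}(0,S))$, so the goal becomes $\psi_+ \vee \psi_-$. Assuming $\neg\psi_+ \wedge \neg\psi_-$, I would use existential elimination to extract disjoint fresh witnesses: $N_1, 0_1, S_1$ satisfying $\PAF^{2(N_1)}_1 \wedge \neg\phi^{(N_1)}(0_1,S_1)$, and independently $N_2, 0_2, S_2$ satisfying $\PAF^{2(N_2)}_2 \wedge \phi^{(N_2)}(0_2,S_2)$.

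I would then instantiate Corollary \ref{folklore_values} at precisely these two tuples. From $\PAF^{2(N_1)}_1 \wedge \PAF^{2(N_2)}_2$ the corollary yields $\phi^{(N_1)}(0_1,S_1) \leftrightarrow \phi^{(N_2)}(0_2,S_2)$; combined with the extracted $\phi^{(N_2)}(0_2,S_2)$ this forces $\phi^{(N_1)}(0_1,S_1)$, contradicting $\neg\phi^{(N_1)}(0_1,S_1)$. Discharging the assumption yields $\psi_+ \vee \psi_-$, which is the desired theorem.

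I do not anticipate a genuine obstacle here: the mathematical content has been absorbed entirely into Theorem \ref{folklore_restated} and its corollary, and what remains is pure logical bookkeeping. The one point worth flagging is that the two existential eliminations must use \emph{disjoint} fresh variables, since the corollary's universal prefix quantifies independently over two tuples; with that hygiene in place the instantiation goes through without incident. It is perhaps worth noting philosophically that this derivation is strictly internal, mirroring the passage Button and Walsh are after: a syntactic, deductive agreement principle for $\PATWO$ yields, by pure logic, that $\PATWO$ is ``deductively intolerant'' of open questions, without any appeal to full semantics.
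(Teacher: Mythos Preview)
Your proposal is correct and takes essentially the same approach as the paper: both derive intolerance from Corollary~\ref{folklore_values} by pure classical logic, with the only difference being presentational---you argue by reductio (extract witnesses from $\neg\psi_+$ and $\neg\psi_-$ and contradict the biconditional), while the paper proceeds directly by weakening the biconditional to $\phi^{(N_1)}(0_1,S_1)\vee\neg\phi^{(N_2)}(0_2,S_2)$ and then rearranging quantifiers. These are the same argument in contrapositive dress.
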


\begin{proof} By Corollary~\ref{folklore_values}, 
$$\begin{array}{l}
\vdash_2\forall N_1,0_1,S_1,N_2,0_2,S_2((\PAF^{2(N_1)}_1\wedge\PAF^{2(N_2)}_2)\to\\
\hspace{1cm}(\phi^{(N_1)}(0_1,S_1)\vee\neg\phi^{(N_2)}(0_2,S_2))).
\end{array}$$ 
By 
rearranging  quantifiers and connectives, we obtain
{\renewcommand{\arraystretch}{1.2}
$$\begin{array}{l}
\vdash_2 \forall N_10_1S_1(\PAF^{2(N_1)}_1\to\phi^{(N_1)}(0_1,S_1))\vee\\
\hspace{0.4cm}\forall N_20_2S_2(\PAF_2^{2(N_2)}\to\neg\phi^{(N_2)}(0_2,S_2)))
\end{array}$$
\renewcommand{\arraystretch}{1}}
from which the claim follows by change of bound variables.
\end{proof}

\noindent Another way to put this is:

\begin{corollary} If $\phi$ is a second-order sentence in the vocabulary $\{0,S\}$, then  $$\PAF_1^{2(N_1)}\cup\PAF_2^{2(N_2)}\cup\{\phi^{(N_1)}(0_1,S_1),\neg\phi^{(N_2)}(0_2,S_2)\}$$ is deductively inconsistent.  
	
\end{corollary}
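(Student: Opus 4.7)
The plan is to obtain the corollary as an immediate unpacking of Theorem~\ref{intolerance} via a two-case analysis; alternatively, it can be read off directly from Corollary~\ref{folklore_values} by instantiation. I would present the first route, since the corollary is placed after the intolerance theorem and is clearly meant to repackage it as a (in)consistency statement rather than a disjunctive one.

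First I would appeal to Theorem~\ref{intolerance}, which in $\vdash_2$ yields the disjunction
\[
\forall N,0,S(\PAF^{2(N)}\to\phi^{(N)}(0,S))\;\vee\;\forall N,0,S(\PAF^{2(N)}\to\neg\phi^{(N)}(0,S)).
\]
Classical second-order logic permits a case split on this disjunction inside any deduction. In the first case, I would instantiate the universal quantifier with the symbols $N_2,0_2,S_2$ to obtain $\PAF_2^{2(N_2)}\to\phi^{(N_2)}(0_2,S_2)$; discharging the antecedent against $\PAF_2^{2(N_2)}$, which lies in the given set, yields $\phi^{(N_2)}(0_2,S_2)$, in direct contradiction with the hypothesis $\neg\phi^{(N_2)}(0_2,S_2)$. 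Symmetrically, in the second case I would instantiate with $N_1,0_1,S_1$, deduce $\neg\phi^{(N_1)}(0_1,S_1)$ by discharging against $\PAF_1^{2(N_1)}$, and contradict the hypothesis $\phi^{(N_1)}(0_1,S_1)$. Either way, $\vdash_2$ derives $\bot$ from the displayed set, so that set is deductively inconsistent.

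There is no genuine obstacle, since all the content is already packed into Theorem~\ref{intolerance}; the only mild bookkeeping point is to ensure that the bound variable names in $\PAF^{2(N)}$ are cleanly substituted by the disjoint fresh symbols $N_1,0_1,S_1$ and $N_2,0_2,S_2$ appearing in the inconsistent set, which is immediate by a change of bound variables. For completeness I would also note the shorter route via Corollary~\ref{folklore_values}: instantiating its universally quantified variables with $N_1,0_1,S_1,N_2,0_2,S_2$ and discharging the conjoined antecedent $\PAF_1^{2(N_1)}\wedge \PAF_2^{2(N_2)}$ gives the biconditional $\phi^{(N_1)}(0_1,S_1)\leftrightarrow\phi^{(N_2)}(0_2,S_2)$, which combined with the two remaining hypotheses $\phi^{(N_1)}(0_1,S_1)$ and $\neg\phi^{(N_2)}(0_2,S_2)$ yields a contradiction in one step.
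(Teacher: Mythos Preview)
Your proposal is correct and aligns with the paper's treatment: the paper presents this corollary without proof, introducing it simply as ``another way to put'' Theorem~\ref{intolerance}, so your unpacking via a case split on the intolerance disjunction is exactly what is intended. Your alternative route through Corollary~\ref{folklore_values} is equally valid and, as you note, slightly more direct.
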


\noindent Button and Walsh take this to place `pressure on the Algebraic Attitude toward arithmetic'(\cite{MR3821510}, p. 236), that is, on the idea that arithmetic is not univocal.

This is the line of thought Button and Walsh want to see duplicated for the case of set theory.  Categoricity will have to be replaced by quasi-categoricity -- as in Theorem \ref{ZermeloCat}, the two models must be of the same height -- but with that proviso, just such an internal theorem has been proved by the second author and Tong Wang (\cite{MR3326591}).  Begin with $\ZFTWO$ in the usual vocabulary $\{\in\}$.\footnote{Button and Walsh actually use Scott-Potter set theory, but this makes little difference for our purposes, so we stick with $\ZFTWO$ to maintain consistency.   Urelements are similarly irrelevant, so for simplicity, we assume there are none.} Let $E_1$ and $E_2$ be binary relation symbols and $X_1$ and $X_2$ be unary relation symbols. If $\phi$ is a second-order sentence in the vocabulary $\{\in\}$, let $\phi(E_1)$ and $\phi(E_2)$ be translations of $\phi$ into the vocabularies $\{E_1\}$ and $\{E_2\}$, respectively, and  let $\phi^{(X_1)}(E_1)$ and  $\phi^{(X_2)}(E_2)$ be $\phi(E_1)$  and $\phi(E_2)$ be $\phi$ with its first- and second-order quantifiers relativized to $X_1$ and $X_2$, respectively. Once again exploiting finiteness, this time of $\ZFTWO$, let  $\ZFF^{2(X_1)}(E_1)=\bigwedge\{\phi^{(X_1)}(E_1):\phi(E_1)\in\ZFTWO(E_1)\}$ and 
$\ZFF^{2(X_2)}(E_2)=\bigwedge\{\phi^{(X_2)}(E_2):\phi(E_2)\in\ZFTWO(E_2)\}$.  Finally, let $\IA$ be the second-order sentence in the vocabulary $\{X_1,E_1,X_2,E_2\}$ which says -- assuming the axioms of $\ZFTWO$ for $(X_1,E_1)$ and $(X_2,E_2)$ -- that the classes of inaccessible cardinals in the sense of $(X_1,E_1)$ and $(X_2,E_2)$, respectively, are isomorphic, and let ISO$((X_1,E_1),(X_2,E_2)))$ be the second-order sentence that says  $(X_1,E_1)$ and $(X_2,E_2)$ are isomorphic. Then we have:

\begin{theorem}[\cite{MR3326591}]  \label{int_cat_ST}\emph{(Internal quasi-categoricity of ZF$^2$)}
	$$\vdash_2(\ZFF^{\hspace{1pt}2(X_1)}(E_1)\wedge\ZFF^{\hspace{1pt}2(X_2)}(E_2)\wedge \IA)\to\ISO((X_1,E_1),(X_2,E_2)).$$
\end{theorem}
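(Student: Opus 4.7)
The plan is to internalize Zermelo's classical quasi-categoricity proof (Theorem \ref{ZermeloCat}) inside the formal deductive system $\vdash_2$, rather than carrying it out in a set-theoretic background. Work in $\vdash_2$ and assume $\ZFF^{2(X_1)}(E_1)\wedge\ZFF^{2(X_2)}(E_2)\wedge\IA$. First I would develop, relative to each pair $(X_i,E_i)$, the usual Zermelian machinery: von Neumann ordinals, well-founded ranks, and the cumulative hierarchy $V_\alpha^{(X_i,E_i)}$. The internal analog of Zermelo's Lemma~1 -- a transitive subdomain of $X_i$ closed under $(X_i,E_i)$-subsets is all of $X_i$ -- follows from relativized second-order Foundation, and relativized Replacement lets us iterate powerset along $(X_i,E_i)$-ordinals. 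This yields, as a second-order consequence of $\ZFF^{2(X_i)}(E_i)$, the rank-stratification theorem $X_i=\bigcup_{\alpha}V_\alpha^{(X_i,E_i)}$.

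Next I would use $\IA$ to align the two hierarchies at the level of ordinals. From the isomorphism of inaccessibles supplied by $\IA$, together with the internally provable fact that in a model of $\ZFTWO$ every ordinal is uniquely determined by its position relative to the inaccessibles below it, I would extract (via Comprehension) an order-isomorphism $j$ between the class of $(X_1,E_1)$-ordinals and the class of $(X_2,E_2)$-ordinals. The point is that without $\IA$ the two ordinal-classes could have incomparable heights, and no map $F$ could exist; with $\IA$ the ordinal backbones match, and we need only transport the contents of each rank.

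With $j$ in hand, define the desired $F\subseteq X_1\times X_2$ by $\Pi^1_1$-Comprehension as the unique binary relation satisfying the recursive clause ``$F(a)=b$ iff the $E_2$-elements of $b$ are exactly the $F$-images of the $E_1$-elements of $a$,'' equivalently as the union of all partial isomorphisms $f_\alpha:V_\alpha^{(X_1,E_1)}\to V_{j(\alpha)}^{(X_2,E_2)}$. Totality and injectivity of $F$ then follow by $E_1$-induction (i.e. internal Foundation on $(X_1,E_1)$), surjectivity follows by $E_2$-induction together with the ontoness of $j$, and the $\in$-preservation clause is built into the defining formula; the conjunction yields $\ISO((X_1,E_1),(X_2,E_2))$. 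The main obstacle is precisely this step: transfinite recursions that are routine against a set-theoretic metatheory must here be replaced by a single second-order definition together with deductive inductions. The trick is to let $\Pi^1_1$-Comprehension supply $F$ in one stroke and then prove by second-order induction on $(X_1,E_1)$-rank that $F$ extends every partial stage and is functional; the relativized Replacement inside each $(X_i,E_i)$, combined with Comprehension at the outer level, is exactly strong enough. A secondary subtlety is to phrase $j$ entirely in the vocabulary $\{X_1,E_1,X_2,E_2\}$, so that no tacit appeal is made to an external universe of ordinals shared by the two models.
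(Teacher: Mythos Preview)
The paper does not supply its own proof of this theorem; it is stated with a citation to \cite{MR3326591} and the text immediately passes to its corollaries. So there is no in-paper argument to compare your sketch against.

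That said, your outline is essentially the expected one --- internalize Zermelo's rank-by-rank construction inside $\vdash_2$, with $\IA$ ensuring the two hierarchies have the same height --- and matches the strategy of the cited source. One step deserves tightening. The sentence ``every ordinal is uniquely determined by its position relative to the inaccessibles below it'' is not literally true (there are many ordinals between consecutive inaccessibles), and $\IA$ by itself does not hand you an order-isomorphism $j$ of \emph{all} ordinals. What actually does the work is that the second-order Comprehension axioms are shared by both relativizations: if $F$ already bijects $V_\alpha^{(X_1,E_1)}$ onto some $V_\beta^{(X_2,E_2)}$, then relativized second-order Separation and Extensionality force the next ranks to biject as well, since the $E_i$-subsets of a given level are in one-one correspondence with the ambient subsets of that level on both sides. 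So $j$ and $F$ are built simultaneously; you do not first extract $j$ and then define $F$. The role of $\IA$ is only to block the one remaining failure mode: that one hierarchy terminates while the other continues, which would produce an internal inaccessible on one side (the boundary of the shorter model, viewed from the longer one) with no counterpart on the other. With that correction your use of $\Pi^1_1$-Comprehension to obtain $F$ in one stroke, followed by $E_1$- and $E_2$-induction for totality and surjectivity, goes through.
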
 

\noindent (Cf. \cite{MR3821510}, p. 255.) As in the case of arithmetic (see Subsection~\ref{subsec:par2}), this implies Zermelo's categoricity theorem.

The counterpart to Corollary \ref{folklore_values} follows immediately:

\begin{corollary}\label{folklore_values2}Suppose $\phi$ is a second-order sentence in the vocabulary $\{\in\}$. Then 
	$$\vdash_2(\ZFF^{\hspace{1pt}2(X_1)}(E_1)\wedge\ZFF^{\hspace{1pt}2(X_2)}(E_2)\wedge \IA)\to(\phi^{(X_1)}(E_1)\leftrightarrow\phi^{(X_2)}(E_2)).$$
\end{corollary}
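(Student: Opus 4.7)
The plan is to deduce this corollary from Theorem~\ref{int_cat_ST} together with the (deductive) second-order isomorphism invariance of truth. The strategy exactly parallels how Corollary~\ref{folklore_values} follows from Theorem~\ref{folklore_restated} in the arithmetic case: internal categoricity gives us an isomorphism, and any second-order sentence must then take the same truth value under both interpretations.

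First, I would argue under the antecedent $\ZFF^{\hspace{1pt}2(X_1)}(E_1)\wedge\ZFF^{\hspace{1pt}2(X_2)}(E_2)\wedge \IA$. A direct appeal to Theorem~\ref{int_cat_ST} then yields $\ISO((X_1,E_1),(X_2,E_2))$; unpacking the existential, we obtain a function symbol $F$ (witnessed by second-order comprehension) which is a bijection between $X_1$ and $X_2$ satisfying $E_1(a,b)\leftrightarrow E_2(F(a),F(b))$ for all $a,b\in X_1$.

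Second, I would establish, by an external induction on the complexity of the fixed second-order sentence $\phi$, the schema
$$\vdash_2 \ISO((X_1,E_1),(X_2,E_2)) \to (\phi^{(X_1)}(E_1)\leftrightarrow \phi^{(X_2)}(E_2)).$$
Atomic cases and propositional connectives are immediate from the isomorphism clause; first-order quantifier cases use the bijectivity of $F$ between $X_1$ and $X_2$. The second-order quantifier case is the one requiring care: given a relation variable $Y$ ranging over (say) unary relations on $X_1$, one must transport $Y$ to the corresponding relation $F[Y]=\{z\in X_2:\exists y(Y(y)\wedge F(y)=z)\}$ on $X_2$, and conversely via $F^{-1}$. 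The existence of these pushforwards is delivered by the comprehension axioms built into $\vdash_2$, and the induction hypothesis propagates equivalence through the quantifier. Splicing this schema with the isomorphism produced in the first step gives the biconditional $\phi^{(X_1)}(E_1)\leftrightarrow\phi^{(X_2)}(E_2)$, and discharging the assumption finishes the derivation.

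The main obstacle to watch is the second-order quantifier step, since any sloppiness there risks smuggling in semantic reasoning rather than a genuine deduction in $\vdash_2$; but because $F$ is already a term-definable object inside the deduction and comprehension is available, the pushforward construction stays entirely proof-theoretic, so the induction goes through uniformly.
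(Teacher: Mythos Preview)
Your proposal is correct and matches the paper's approach: the paper simply states that the corollary ``follows immediately'' from Theorem~\ref{int_cat_ST}, exactly paralleling how Corollary~\ref{folklore_values} is declared an immediate consequence of Theorem~\ref{folklore_restated}. You have spelled out the isomorphism-invariance induction that the paper leaves implicit, and your care with the second-order quantifier step (using comprehension to push relations forward along $F$) is precisely the content behind the word ``immediate''.
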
 

\noindent The counterpart to arithmetic intolerance -- the idea that every sentence is either true in all models or false in all models -- requires fixing the number of inaccessibles, their order type.  Any order type will do, so for the ultimate in simplicity, let $\IA_0$ be the first-order sentence of set theory saying that there aren't any inaccessible cardinals, that every limit cardinal $>\omega$ is singular. Finally, let $$\Gamma=\bigwedge(\ZF^{\hspace{1pt}2}\cup\{\IA_0\}).$$  Then, if $E$ is a new binary relation symbol and $X$ a new unary relation symbol, we have

\begin{theorem}[\cite{MR3821510}]\label{intolerance1} \emph{(`Intolerance' of ZF$^2$)} Suppose $\phi$ is a second-order sentence in the vocabulary $\{\in\}$. Then 
	$$\vdash_2 \forall X,E(\Gamma^{(X)}(E)\to\phi^{(X)}(E))\vee\forall X,E(\Gamma^{(X)}(E)\to\neg\phi^{(X)}(E)).$$
\end{theorem}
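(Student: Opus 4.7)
The plan is to imitate the proof of Theorem~\ref{intolerance} verbatim, replacing Corollary~\ref{folklore_values} with Corollary~\ref{folklore_values2}. The only extra bookkeeping needed is to discharge the $\IA$ clause that appears as a hypothesis in Corollary~\ref{folklore_values2} but has no analog in the arithmetic case.

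First I would unpack $\Gamma$. Since $\Gamma = \bigwedge(\ZFTWO \cup \{\IA_0\})$, we have $\Gamma^{(X_i)}(E_i) \vdash_2 \ZFF^{\hspace{1pt}2(X_i)}(E_i)$ for $i=1,2$, which handles the first two conjuncts of the antecedent of Corollary~\ref{folklore_values2}. For the third, observe that $\IA_0^{(X_i)}(E_i)$ says, internally to $(X_i,E_i)$, that there are no inaccessible cardinals; hence the classes of inaccessibles in $(X_1,E_1)$ and in $(X_2,E_2)$ are both provably empty, and are therefore isomorphic via the empty bijection. This is a pure logical triviality in second-order logic, so
$$\vdash_2 (\Gamma^{(X_1)}(E_1) \wedge \Gamma^{(X_2)}(E_2)) \to \IA.$$
Combined with Corollary~\ref{folklore_values2}, this yields
$$\vdash_2 (\Gamma^{(X_1)}(E_1) \wedge \Gamma^{(X_2)}(E_2)) \to (\phi^{(X_1)}(E_1) \leftrightarrow \phi^{(X_2)}(E_2)).$$

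From here I would mimic the final move of Theorem~\ref{intolerance} exactly. Weaken the biconditional to the disjunction $\phi^{(X_1)}(E_1) \vee \neg\phi^{(X_2)}(E_2)$, then rearrange the universal quantifiers and connectives — since $X_1,E_1$ do not occur in the $\neg\phi^{(X_2)}(E_2)$ disjunct and vice versa — to obtain
$$\vdash_2 \forall X_1,E_1(\Gamma^{(X_1)}(E_1) \to \phi^{(X_1)}(E_1)) \vee \forall X_2,E_2(\Gamma^{(X_2)}(E_2) \to \neg\phi^{(X_2)}(E_2)).$$
A change of bound variables then produces the statement of the theorem.

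There is no real obstacle once Corollary~\ref{folklore_values2} is in hand; the only delicate point is checking that the antecedent $\IA$ of Corollary~\ref{folklore_values2} is discharged by $\IA_0$. This depends on nothing more than the fact that the empty relation is, provably in second-order logic, a bijection between two empty classes preserving whatever structure $\IA$ demands. Everything else is the routine quantifier rearrangement already carried out in the arithmetic case.
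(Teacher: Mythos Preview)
Your proof is correct and follows essentially the same route as the paper's: apply Corollary~\ref{folklore_values2}, weaken the biconditional to a disjunction, rearrange quantifiers, and rename bound variables. In fact you are more careful than the paper, which silently absorbs the passage from $\Gamma^{(X_i)}(E_i)$ to $\ZFF^{\hspace{1pt}2(X_i)}(E_i)\wedge\IA$ into the words ``By Corollary~\ref{folklore_values2}''; your explicit discharge of $\IA$ via $\IA_0$ and the empty bijection fills exactly the gap the paper leaves tacit.
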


\begin{proof} By Corollary~\ref{folklore_values2}, 
	$$\vdash_2\forall X_1,E_1,X_2,E_2((\Gamma^{(X_1)}(E_1)\wedge\Gamma^{(X_2)}(E_2))\to(\phi^{(X_1)}(E_1)\vee\neg\phi^{(X_2)}(E_2))).$$ 
	By 
	rearranging  quantifiers and connectives, we obtain
	$$\vdash_2 \forall X_1,E_1(\Gamma^{(X_1)}(E_1)
	\to\phi^{(X_1)}(E_1))\vee\forall X_2,E_2(\Gamma^{(X_2)}(E_2)
	\to\neg\phi^{(X_2)}(E_2)))$$
	from which the claim follows by change of bound variables.
\end{proof}

\noindent Once again, `intolerance' can be rephrased:

\begin{corollary}\label{incon1}
	If $\phi$ is a second-order sentence in vocabulary  $\{\in\}$, then the theory 
	$$\{\Gamma^{(X_1)}(E_1),\Gamma^{(X_2)}(E_2),\phi^{(X_1)}(E_1),\neg \phi^{(X_2)}(E_2)\}$$
	is deductively inconsistent. 
\end{corollary}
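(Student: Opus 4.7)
The plan is to derive Corollary~\ref{incon1} directly from Theorem~\ref{intolerance1}, treating it as a routine reformulation of ``intolerance'' via contraposition and universal instantiation in second-order logic. The target is to exhibit a $\vdash_2$-derivation of a contradiction from the four hypotheses $\Gamma^{(X_1)}(E_1)$, $\Gamma^{(X_2)}(E_2)$, $\phi^{(X_1)}(E_1)$, and $\neg\phi^{(X_2)}(E_2)$.

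First, I would invoke Theorem~\ref{intolerance1}, which gives
$$\vdash_2 \forall X,E(\Gamma^{(X)}(E)\to\phi^{(X)}(E))\vee\forall X,E(\Gamma^{(X)}(E)\to\neg\phi^{(X)}(E)),$$
and then proceed by cases on this disjunction. In the first case, I universally instantiate the quantifiers $\forall X, E$ with the specific second-order and first-order variables $X_2, E_2$, obtaining $\Gamma^{(X_2)}(E_2)\to\phi^{(X_2)}(E_2)$; combining with the assumption $\Gamma^{(X_2)}(E_2)$ via modus ponens yields $\phi^{(X_2)}(E_2)$, which contradicts the assumption $\neg\phi^{(X_2)}(E_2)$. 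In the second case, I instantiate with $X_1, E_1$ to obtain $\Gamma^{(X_1)}(E_1)\to\neg\phi^{(X_1)}(E_1)$; modus ponens with $\Gamma^{(X_1)}(E_1)$ gives $\neg\phi^{(X_1)}(E_1)$, contradicting the assumption $\phi^{(X_1)}(E_1)$. Either way, the four-sentence theory proves $\bot$, which is the desired deductive inconsistency.

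There is really no obstacle: the work has already been done in proving Theorem~\ref{intolerance1} (where the genuine mathematical content, namely internal quasi-categoricity from Theorem~\ref{int_cat_ST} together with the $\IA_0$ clause that fixes the order-type of inaccessibles, is deployed). The only subtlety worth flagging is notational: one must be careful that the bound variables $X, E$ in the statement of Theorem~\ref{intolerance1} are genuinely free to be instantiated by the distinct pairs $X_1, E_1$ and $X_2, E_2$ appearing in the corollary, which is immediate after an $\alpha$-renaming if needed. Thus the proof amounts to a contrapositive restatement: Theorem~\ref{intolerance1} asserts that no sentence $\phi$ can be ``true in one model but false in another,'' and Corollary~\ref{incon1} simply packages this as the inconsistency of the obvious four-sentence theory expressing such a disagreement.
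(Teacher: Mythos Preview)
Your proposal is correct and matches the paper's approach: the paper presents this corollary without proof, introducing it simply as a rephrasing of Theorem~\ref{intolerance1} (``Once again, `intolerance' can be rephrased''), and your disjunction-elimination argument from that theorem is exactly the intended routine unpacking.
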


\noindent From this, Button and Walsh draw the expected conclusion:  

\begin{quote}
	The present observation applies … pressure to those who regard the continuum hypothesis as \textit{indeterminate}.  Indeed, it is not obvious how best to sustain that attitude, in the face of this deductive inconsistency.  (\cite{MR3821510}, p. 255)
\end{quote}

\noindent We examine this claim in Subsection~\ref{subsec:but3}.

Theorem \ref{int_cat_ST} is both weakly internal and pure; of the desiderata present in Parsons's discussion, it fails only to be first-order.  Of course, this doesn't trouble Button and Walsh, but it's still worth observing that internal categoricity and its consequences can be carried over to first-order $\ZF$. The source of categoricity is actually more transparent in the first-order than in the second-order context because blanket appeal to the second-order comprehension axiom is replaced by explicit, targeted features of the first-order separation and replacement schemas. 

To see this, begin with first-order $\ZF$ in the vocabulary  $\{\in\}$ and use the notation $\ZF(E)$ for the result of replacing $\in$ with a binary relation symbol $E$.  Now suppose that $E_1$ and $E_2$ are new binary relation symbols, $X_1$ and $X_2$ are new unary relation symbols, and $\pi$ is a new unary function symbol. If $\phi(E)$ is a sentence in the vocabulary $\{E\}$, let $\phi^{(X)}(E)$ be $\phi(E)$ with the first-order quantifiers relativized to $X$.   Now let $\ZF^{(X_1)}(E_1)$ consist of all $\phi^{(X_1)}(E_1)$, where $\phi\in\ZF$, allowing in the separation and replacement schemas formulas from the vocabulary $\{X_1,E_1,X_2,E_2,\pi,\in\}$ with unrestricted (i.e. not relativized to $X_1$) quantifiers -- and similarly
$\ZF^{(X_2)}(E_2)$. Let $\IO_{\pi}$ be a first-order sentence of the vocabulary $\{X_1,E_1,X_2,E_2,\pi,\in\}$ saying -- assuming the axioms $\ZF$ for $(X_1,E_1)$ and $(X_2,E_2)$ -- that $\pi$ is an isomorphism between the ordinals of  $(X_1,E_1)$ and the ordinals of $(X_2,E_2)$. Finally, given a formula $\phi(x,y)$, let $\ISO_{\phi}((X_1,E_1),(X_2,E_2))$ be a first-order sentence that says -- again assuming the axioms $\ZF$ for $(X_1,E_1)$ and $(X_2,E_2)$ --  that $\phi(x,y)$ defines an isomorphism between $(X_1,E_2)$ and $(X_2,E_2)$, extending $\pi$.  Then

\begin{theorem}\label{int_cat_ST_first}\emph{(Internal quasi-categoricity of ZF)}
	There is a first-order formula $\phi=\phi(x,y)$ of set theory  such that $$\ZF \cup \ZF^{(X_1)}(E_1)\cup\ZF^{(X_2)}(E_2)\cup\{\IO_{\pi}\}\vdash\ISO_{\phi} ((X_1,E_1),(X_2,E_2)).$$
\end{theorem}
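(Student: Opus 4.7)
The plan is to construct $\phi$ as the first-order formula defining a Mostowski-style collapse of $(X_1,E_1)$ onto $(X_2,E_2)$, executed recursively within the background $\ZF$ and coordinated across the two internal models. The hypothesis $\IO_\pi$ guarantees matching ordinal heights, which is what keeps the recursion from stalling; agreement with $\pi$ on ordinals will then come for free from the rigidity of well-orderings.

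I would take $\phi(x,y)$ to be the first-order formula in the joint vocabulary asserting that $x\in X_1$, $y\in X_2$, and there exists a (background-ZF) set $f$ such that: (i) $\operatorname{dom}(f)$ is an $E_1$-closed subset of $X_1$ containing $x$; (ii) $\operatorname{ran}(f)\subseteq X_2$; (iii) for every $a\in\operatorname{dom}(f)$, $f(a)$ is the unique $c\in X_2$ whose $E_2$-extension coincides with $\{f(b):b\in X_1,\ b\,E_1\,a\}$; and (iv) $f(x)=y$. The function $f$ lives in the background universe and witnesses a bounded initial segment of the collapse; its existence at each $x$ is what the main induction must secure.

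The proof would then proceed in three steps. First, totality: for each $x\in X_1$ there is some $y\in X_2$ with $\phi(x,y)$, established by $E_1$-induction inside $(X_1,E_1)$. The crucial use of the extended schemas of $\ZF^{(X_1)}(E_1)$ enters here, because the induction formula mentions $X_2$, $E_2$, $\pi$, and $\in$, and only the joint-vocabulary form of separation and replacement licenses the step. At the inductive stage, one gathers the witnesses $f_b$ for $b\,E_1\,x$, takes their union, and appends the pair $(x,c)$ where $c\in X_2$ is the element whose $E_2$-extension is $\{f_b(b):b\,E_1\,x\}$; the existence of such a $c$ uses extended replacement and pairing inside $(X_2,E_2)$. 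Second, uniqueness: any two witnesses for $\phi(x,\cdot)$ agree on their common $E_1$-transitive closure, again by $E_1$-induction, so $\phi$ is functional. Third, verifying $\Iso_\phi$: the preservation clause $a\,E_1\,b\leftrightarrow\phi(a)\,E_2\,\phi(b)$ is immediate from (iii); surjectivity onto $X_2$ is dual, proved by $E_2$-induction using $\ZF^{(X_2)}(E_2)$ with joint-vocabulary formulas; and $\phi$ restricted to the ordinals of $(X_1,E_1)$ is an $\in$-isomorphism onto the ordinals of $(X_2,E_2)$, hence equal to $\pi$ by rigidity.

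The hard part is the delicate coordination among three set-theoretic contexts: the background $\ZF$ that furnishes the sets $f$; the internal theory $\ZF^{(X_1)}(E_1)$, which must license $E_1$-induction on a formula that cross-references the other model; and $\ZF^{(X_2)}(E_2)$, which must supply the collapse targets $c$ by extended replacement. At every inductive step one must check that the instance of separation or replacement invoked is genuinely an \emph{extended} instance drawing on the joint vocabulary, since this relaxation of the schemas is precisely what makes the argument go through. The role of $\IO_\pi$ in all this is narrow but indispensable: without it, the same construction could still be run, but might terminate with $\phi$ witnessing only an isomorphism between $(X_1,E_1)$ and an initial $E_2$-segment of $(X_2,E_2)$ (or conversely), and it is exactly $\IO_\pi$ that certifies the recursion neither overshoots nor undershoots.
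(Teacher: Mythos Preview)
The paper does not give a detailed proof here, only a reference to \cite{MR3984974}; your sketch is broadly in line with the standard argument, and the key moves --- building a Mostowski-style collapse, leaning on the extended schemas to license cross-vocabulary induction, and letting the background $\ZF$ furnish the bounded partial maps $f$ --- are all correct.

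There is, however, a gap in your totality step. You assert that the target $c\in X_2$ whose $E_2$-extension is $\{f_b(b):b\,E_1\,x\}$ can be obtained by ``extended replacement and pairing inside $(X_2,E_2)$,'' but Replacement in $\ZF^{(X_2)}(E_2)$ still requires a domain that is itself an $E_2$-set, and the natural domain here --- the $E_1$-extension of $x$ --- lives in $(X_1,E_1)$, not $(X_2,E_2)$. The standard repair is to run the induction not on bare $E_1$-membership but on $E_1$-rank, carrying as part of the inductive hypothesis that $\phi$ maps $V^{E_1}_\alpha$ bijectively onto $V^{E_2}_{\pi(\alpha)}$; then $\{f_b(b):b\,E_1\,x\}$ is a joint-vocabulary-definable subclass of the $E_2$-set $V^{E_2}_{\pi(\alpha)}$ (for $\alpha$ the $E_1$-rank of $x$), and extended \emph{Separation} in $(X_2,E_2)$ --- not Replacement --- delivers $c$. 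This also means that $\IO_\pi$ is not merely a terminal check against overshoot or undershoot, as your final paragraph suggests, but is invoked at every inductive step to supply the bound $\pi(\alpha)$ on the $(X_2,E_2)$ side.
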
 

\noindent The proof is as in \cite{MR3984974}.\footnote{\cite{Martin} can be read as giving an informal proof of something like this theorem.  A claim resembling Theorem \ref{int_cat_ST1} is formulated (without proof) in the appendix to  \cite{McGee1997-MCGHWL}.} Here the axioms for $(X_1,E_1)$ and $(X_2,E_2)$ are shifted to the left side of the turnstile because they are infinite in number, but this is trivial.  Theorem \ref{int_cat_ST_first} also has an extra ZF on the left side, like the extra $\PA$ in Theorem \ref{theorem2}, but this is less troublesome here, where we aren't worried about Kurt's epistemic limitations.   More to the current point, what fundamentally differentiates Theorem \ref{int_cat_ST_first} from the second-order Theorem \ref{int_cat_ST} is, as advertized, the mechanism by which  the crucial links between the $(X_1, E_1)$ and $(X_2,E_2)$ are forged:  in the first-order theorem, the key is allowing the vocabulary of one into the axiom schemas of the other; in the second-order theorem, these specifics are masked in the undifferentiated Comprehension Axioms.

As with arithmetic, the ZF on the left side can be eliminated if we assume that the domains of the two versions of $\ZF$ are the same and comprehensive.  An extra assumption like IO$_{\pi}$ is now unnecessary, but $\ISO_\phi$ needs modification:  given a formula $\phi(x,y)$, let $\Aut_{\phi}$ be the first-order sentence which says that $\phi(x,y)$ defines an automorphism between the binary predicates $E_1$ and $E_2$, again assuming the axioms $\ZF$ for $E_1$ and $E_2$. Let  $\ZF(E,E')$ be $\ZF$ with $E$ as the membership relation, but with separation and replacement schemas allowing formulas from the vocabulary $\{E,E'\}$. Then

\begin{theorem}[\cite{MR3984974}]\label{int_cat_ST1}
	There is a first-order formula $\phi=\phi(x,y)$ of set theory such that $$\ZF(E_1,E_2)\cup\ZF(E_2,E_1)\vdash
	\Aut_{\phi}(E_1,E_2).$$
\end{theorem}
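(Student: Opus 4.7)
The plan is to lift the strategy of Theorem~\ref{theorem3} (internal categoricity of first-order $\PA$ on a common domain) to the set-theoretic context, with $E$-rank recursion replacing arithmetic induction. The crucial feature that makes this possible is that $\ZF(E_1,E_2)$ (resp.\ $\ZF(E_2,E_1)$) permits separation and replacement instances whose formulas mention the other predicate $E_2$ (resp.\ $E_1$), exactly as Kurt's enhanced induction schema did in the $\PA$ case.

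I would take $\phi(x,y)$ to be the first-order formula in $\{E_1,E_2\}$ asserting that there exist $t,t',f$ such that $t$ is $E_1$-transitive and contains $x$, $t'$ is $E_2$-transitive and contains $y$, and $f\subseteq t\times t'$ codes a bijection with $f(x)=y$ satisfying $a\,E_1\,b \leftrightarrow f(a)\,E_2\,f(b)$ for all $a,b\in t$. Intuitively, $\phi(x,y)$ says that an $E_1$-initial segment around $x$ and an $E_2$-initial segment around $y$ match via a canonical, Mostowski-collapse-style local isomorphism.

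The proof would then proceed in four steps. (i) Working in $\ZF(E_1,E_2)$, show by $E_1$-induction on $x$ that $\phi$ is functional: any two local bijections witnessing $\phi(x,y_1)$ and $\phi(x,y_2)$ must agree at $x$ by $E_2$-extensionality and the inductive hypothesis applied to $E_1$-predecessors of $x$, so $y_1=y_2$. (ii) Again in $\ZF(E_1,E_2)$ and by $E_1$-induction, prove totality: given the unique image $F(a)$ for each $E_1$-predecessor $a$ of $x$, use the enhanced replacement schema (with a formula mentioning $E_2$) to collect these into an $E_2$-set $y$, then paste the local witnesses at the $a$'s into a global witness for $\phi(x,y)$. (iii) Invoke the symmetric argument in $\ZF(E_2,E_1)$ to obtain that the converse relation is total and functional, giving surjectivity of $F$. (iv) Structure preservation is built into $\phi$: for $x,x'$ covered by a common $E_1$-transitive witness $t$, the bijection on $t$ delivers $x\,E_1\,x' \leftrightarrow F(x)\,E_2\,F(x')$.

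The main obstacle is the coherence of local witnesses into a single global automorphism: distinct $E_1$-transitive neighbourhoods $t$ and $\tilde{t}$ containing $x$ must produce the same value $y$. This is the heart of the Mostowski-collapse uniqueness argument, and is delivered by $E_1$-foundation together with $E_2$-extensionality; the catch is that the induction formula needed to prove this uniqueness essentially mentions $E_2$, which is exactly what the enhanced separation schema of $\ZF(E_1,E_2)$ permits. A secondary technical point is ensuring that the set $y$ witnessing totality can be formed at all; this is a direct application of enhanced replacement, whose defining functional relation freely uses $E_1$ while producing an element of the $E_2$-universe. These two uses of the cross-vocabulary schemas are where the assumption that \emph{both} $\ZF(E_1,E_2)$ and $\ZF(E_2,E_1)$ hold (rather than just the plain theories) is indispensable.
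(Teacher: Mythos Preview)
The paper gives no proof of this theorem; it simply cites \cite{MR3984974} (and for the companion Theorem~\ref{int_cat_ST_first} says only ``the proof is as in \cite{MR3984974}''). Your overall strategy---a Mostowski-collapse-style $\phi$ together with $E$-induction---is the right shape and is the strategy of that source.

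That said, step (ii) contains a real gap. You say that, working in $\ZF(E_1,E_2)$, enhanced replacement will ``collect [the $F(a)$] into an $E_2$-set $y$''. But replacement in $\ZF(E_1,E_2)$ has $E_1$ as its membership relation: it yields an element $w$ with $z\,E_1\,w \leftrightarrow \exists a\,(a\,E_1\,x \wedge z=F(a))$, an element whose \emph{$E_1$}-extension is $\{F(a):a\,E_1\,x\}$. What is needed is a $y$ whose \emph{$E_2$}-extension is that same class. Switching to enhanced $E_2$-replacement from $\ZF(E_2,E_1)$ does not help directly either, since that schema takes an $E_2$-set as its indexing domain, and $\{a:a\,E_1\,x\}$ is the $E_1$-extension of $x$, not a priori the $E_2$-extension of anything. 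This $E_1$/$E_2$ bridging is precisely the crux of the argument, not a ``direct application'' of either replacement schema. (The same issue resurfaces when you ``paste the local witnesses'': an $E_1$-collected family of $E_2$-transitive $t'_a$'s is an $E_1$-set, and taking its $E_2$-union is again illicit.)

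The argument in \cite{MR3984974} closes this gap by strengthening the induction: one proves, by $E_1$-induction on ordinals, that $F$ carries each level $V_\alpha^{E_1}$ bijectively onto some $V_{\pi(\alpha)}^{E_2}$. Then $\{F(a):a\,E_1\,x\}$ lies inside the \emph{$E_2$-set} $V_{\pi(\alpha)}^{E_2}$, and enhanced $E_2$-\emph{separation} on that set produces the desired $y$. One must also verify that $\pi(\alpha)$ always exists---that the $E_2$-ordinals never ``run out'' relative to the $E_1$-ordinals---and this is exactly where the shared-domain hypothesis does its work: if they ran out at stage $\alpha$, the whole common domain would be the range of an $E_1$-set function on $V_\alpha^{E_1}$, hence itself an $E_1$-set, which is impossible. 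Your proposal correctly flags the cross-vocabulary schemas as indispensable but does not isolate this step, and without it the totality argument does not go through.
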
 

\noindent Here we have our counterpart to Theorem \ref{theorem3}.

As in the aftermath of Theorem \ref{int_cat_ST}, there's this immediate consequence:

\begin{corollary}\label{values}If $\psi$ is a first-order sentence in the vocabulary $\{\in\}$, then 
	$$\ZF(E_1,E_2)\cup\ZF(E_2,E_1)\vdash\psi(E_1)\leftrightarrow\psi(E_2).$$
\end{corollary}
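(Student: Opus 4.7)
The plan is to apply Theorem~\ref{int_cat_ST1} and then carry out a standard transport-of-truth argument along the resulting definable automorphism. Theorem~\ref{int_cat_ST1} hands us a first-order formula $\phi(x,y)$ such that $\ZF(E_1,E_2)\cup\ZF(E_2,E_1)\vdash\Aut_{\phi}(E_1,E_2)$. Unpacking $\Aut_{\phi}$, this theory proves three things about $\phi$: (i) totality and functionality, $\forall x\,\exists!\, y\,\phi(x,y)$; (ii) surjectivity, $\forall y\,\exists!\, x\,\phi(x,y)$; and (iii) the homomorphism clause, $\phi(x,x')\wedge\phi(y,y')\to(x\,E_1\,y\leftrightarrow x'\,E_2\,y')$. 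Informally, $\phi$ is the graph of a definable bijection $F$ of the universe satisfying $a\,E_1\,b\leftrightarrow F(a)\,E_2\,F(b)$.

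I would then establish the following schema by meta-induction on the complexity of an $\{\in\}$-formula $\chi(v_1,\ldots,v_n)$:
$$\ZF(E_1,E_2)\cup\ZF(E_2,E_1)\vdash \forall x_1,\ldots,x_n,y_1,\ldots,y_n\,\Bigl(\bigwedge_{i=1}^n\phi(x_i,y_i)\to\bigl(\chi_1(x_1,\ldots,x_n)\leftrightarrow\chi_2(y_1,\ldots,y_n)\bigr)\Bigr),$$
where $\chi_j$ denotes $\chi$ with every atomic subformula $u\in v$ replaced by $u\,E_j\,v$. The atomic case is precisely clause (iii); Boolean connectives commute with the translation and yield nothing new. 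For a quantifier step $\chi\equiv\exists v\,\chi'(v,v_1,\ldots,v_n)$: given matched tuples with $\bigwedge_i\phi(x_i,y_i)$ and a witness $a$ for $\chi_1'(a,x_1,\ldots,x_n)$, totality supplies $b$ with $\phi(a,b)$, and the induction hypothesis promotes $b$ to a witness of $\chi_2'(b,y_1,\ldots,y_n)$; the converse direction pulls a witness back through $\phi$ using surjectivity. Taking $\chi=\psi$, a sentence, collapses the antecedent of the schema and delivers exactly $\psi(E_1)\leftrightarrow\psi(E_2)$.

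I expect the mathematics to be routine once Theorem~\ref{int_cat_ST1} is invoked: all the genuinely set-theoretic content has been absorbed into the existence of $\phi$ and the provability of $\Aut_{\phi}(E_1,E_2)$, so nothing beyond ordinary first-order deduction is required in the step from Theorem~\ref{int_cat_ST1} to Corollary~\ref{values}. The main obstacle is purely bookkeeping -- keeping the translation compatible with variable renaming and ensuring that the meta-induction really produces, for each specific $\psi$, a bona fide finite derivation in the ambient first-order system -- and it is handled by the standard argument that an isomorphism preserves the truth of first-order formulas, with the definable $\phi$ playing the role of the semantic isomorphism.
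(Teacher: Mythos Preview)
Your proposal is correct and matches the paper's approach: the paper records this corollary as an ``immediate consequence'' of Theorem~\ref{int_cat_ST1}, and in the proof of the subsequent Theorem~\ref{intolerance2} it makes explicit that the passage from $\Aut_{\phi}(E_1,E_2)$ to $\psi(E_1)\leftrightarrow\psi(E_2)$ is by ``induction on $\psi$'', which is exactly the meta-induction you spell out. You have simply written out in detail what the paper leaves as routine.
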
 

\noindent And a new version of Button and Walsh's intorance then follows.  Let $\ZF_n(E,E')$ be the conjunction of the first $n$ axioms of $\ZF(E,E')$ under some natural enumeration.  Then

\begin{theorem}[\cite{MR3821510}](\emph{`Intolerance' of ZF)}\label{intolerance2} There is a natural number $n$ such that if $\psi$ is a first-order sentence in the vocabulary $\{\in\}$, then 
	$$\vdash (\ZF_{n}(E_1,E_2)\to\psi(E_1))\vee(\ZF_{n}(E_2,E_1)\to\neg\psi(E_2)).$$
\end{theorem}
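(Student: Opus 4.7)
The plan is to exploit the fact that the formula $\phi$ delivered by Theorem~\ref{int_cat_ST1} is \emph{fixed}, independent of any target sentence $\psi$. This uniformity is precisely what produces a single $n$ that works for all $\psi$; by contrast, applying compactness directly to Corollary~\ref{values} for each $\psi$ separately only yields the weaker $\forall\psi\,\exists n_\psi$.

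First, invoke Theorem~\ref{int_cat_ST1} to fix one specific first-order formula $\phi(x,y)$ with $\ZF(E_1,E_2)\cup\ZF(E_2,E_1) \vdash \Aut_\phi(E_1,E_2)$. Because first-order derivations use only finitely many premises, there exists an $n$ (depending on $\phi$, the chosen derivation, and the enumeration of axioms) such that $\vdash (\ZF_n(E_1,E_2)\land\ZF_n(E_2,E_1)) \to \Aut_\phi(E_1,E_2)$. This $n$ is chosen once and for all, before any $\psi$ enters the picture.

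Second, establish the metalinguistic schema: for every first-order sentence $\psi$ in the vocabulary $\{\in\}$, $\vdash \Aut_\phi(E_1,E_2) \to (\psi(E_1) \leftrightarrow \psi(E_2))$. This is the standard Tarskian fact that a definable isomorphism preserves first-order truth, proved by induction on the complexity of $\psi$ in the metatheory (atomic case: preservation of the membership predicate packaged into $\Aut_\phi$; Boolean cases: routine; quantifier cases: totality and surjectivity of the definable bijection $\phi$). To handle free variables cleanly one proves, by induction on $\psi(\vec v)$, the auxiliary statement $\forall \vec u\,\forall \vec v\bigl(\bigwedge_i \phi(u_i,v_i) \to (\psi(\vec u)[E_1]\leftrightarrow\psi(\vec v)[E_2])\bigr)$, specializing to sentences at the end. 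The crucial feature is that the derivation from $\Aut_\phi$ to $\psi(E_1)\leftrightarrow\psi(E_2)$ is in pure first-order logic and requires \emph{no additional} axioms beyond what is already absorbed into $\ZF_n$.

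Combining the two steps yields $\vdash (\ZF_n(E_1,E_2)\land\ZF_n(E_2,E_1)) \to (\psi(E_1)\lor\neg\psi(E_2))$ for every $\psi$, and the propositional tautology $(A\land B\to C\lor D)\equiv(A\to C)\lor(B\to D)$ rearranges this to the desired form, exactly mirroring the final manipulations in the proofs of Theorems~\ref{intolerance} and~\ref{intolerance1}. The main delicate point is verifying that $\Aut_\phi(E_1,E_2)$ as actually defined packages enough—functionality, bijectivity, and atomic preservation in both directions—for the Tarskian induction to go through with no further set-theoretic axioms. If any such features in fact require auxiliary facts (e.g.\ a separation instance establishing uniqueness of the $y$ with $\phi(x,y)$), those facts already belong to the fixed derivation of $\Aut_\phi$ and are absorbed by enlarging $n$ in step one, still uniformly in $\psi$.
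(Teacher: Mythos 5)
Your proposal is correct and follows essentially the same route as the paper's own proof: extract a uniform $n$ from the finiteness of the derivation of $\Aut_\phi(E_1,E_2)$ given by Theorem~\ref{int_cat_ST1}, prove $\psi(E_1)\leftrightarrow\psi(E_2)$ by induction on the complexity of $\psi$ relying on $\Aut_\phi$, and rearrange propositionally. Your explicit attention to the uniformity of $\phi$ in $\psi$ and to absorbing any auxiliary separation instances into the fixed $n$ makes precise exactly what the paper leaves implicit.
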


\begin{proof} By Theorem~\ref{int_cat_ST_first} there is an $n$ such that
	$$\ZF_n(E_1,E_2)\cup\ZF_n(E_2,E_1)\vdash\Aut_{\phi}(E_1,E_2).$$ 
Relying on $\Aut_{\phi}(E_1,E_2)$, induction on $\psi$ shows that 
	$$\vdash(\ZF_n(E_1,E_2)\wedge\ZF_n(E_2,E_1))\to(\psi(E_1)\leftrightarrow\psi(E_2)).$$
It follows by propositional logic that
	$$\vdash (\ZF_{n}(E_1,E_2)\to\psi(E_1))\vee (\ZF_{n}(E_2,E_1)
	\to\neg\psi(E_2)).$$
\end{proof}

\noindent And finally,

\begin{corollary}\label{incon2} If $\psi$ is a first-order sentence of the vocabulary $\{\in\}$, then the theory 
\begin{equation}\label{ded_incons1}\{\ZF(E_1,E_2),\ZF(E_2,E_1),\psi(E_1),\neg \psi(E_2)\}\end{equation}
is deductively inconsistent.
\end{corollary}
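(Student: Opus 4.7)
The plan is to derive Corollary~\ref{incon2} as a direct consequence of Corollary~\ref{values}, which is the first-order analogue of the biconditional transfer already exploited in the proof of the corresponding set-theoretic intolerance. The structure mirrors the step from Corollary~\ref{folklore_values} (or Corollary~\ref{folklore_values2}) to the reformulation of intolerance as a deductive inconsistency in the arithmetic case.

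First I would invoke Corollary~\ref{values} to obtain
\[
\ZF(E_1,E_2)\cup\ZF(E_2,E_1)\vdash\psi(E_1)\leftrightarrow\psi(E_2).
\]
Next I would adjoin $\psi(E_1)$ and $\neg\psi(E_2)$ to both sides. By modus ponens on the left-to-right direction of the biconditional, the extended theory proves $\psi(E_2)$, which together with $\neg\psi(E_2)$ yields $\bot$. Hence the theory in (\ref{ded_incons1}) is deductively inconsistent.

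There is essentially no obstacle here, since all the work has already been done in establishing Theorem~\ref{int_cat_ST_first} and its immediate consequence Corollary~\ref{values}; the corollary is just a propositional-logic repackaging. One could equally well derive it from Theorem~\ref{intolerance2}: if $\ZF_n(E_1,E_2)\to\psi(E_1)$ holds, then adjoining $\neg\psi(E_2)$ and $\ZF(E_2,E_1)$ (which proves $\ZF_n(E_2,E_1)$, hence via Corollary~\ref{values} transfers $\psi$ between the two copies) contradicts $\psi(E_1)$; the symmetric disjunct is handled the same way. Either route gives the claim in one line once Corollary~\ref{values} is in hand, so the only real content lies upstream in Theorem~\ref{int_cat_ST_first}.
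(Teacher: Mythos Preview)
Your main argument is correct and is exactly what the paper intends: the corollary is stated in the paper without proof, as an immediate repackaging once Corollary~\ref{values} is in hand, and your derivation via the biconditional and modus ponens is the one-line propositional step that does it.

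One small remark on your aside about the alternative route through Theorem~\ref{intolerance2}: as written, that theorem's disjunction gives you either $\psi(E_1)$ or $\neg\psi(E_2)$ from the relevant $\ZF_n$-fragments, but both of those are already in the theory (\ref{ded_incons1}), so neither disjunct yields a contradiction directly. To make that route work you would apply the theorem with the roles of $E_1$ and $E_2$ swapped (or equivalently to $\neg\psi$), obtaining the disjunction $(\ZF_n(E_2,E_1)\to\psi(E_2))\vee(\ZF_n(E_1,E_2)\to\neg\psi(E_1))$, each disjunct of which \emph{does} contradict the theory. This is a cosmetic point; your primary derivation from Corollary~\ref{values} is clean and suffices.
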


\subsection{Does what they did (or can be done) accomplish what they set out to do?}\label{subsec:but3}

To recap, Button and Walsh have argued that semantic second-order logic can't underpin a persuasive defense of the determinateness of CH, so the question is whether the syntactic second-order logic of the  internal categoricity theorems 
 of Subsection~\ref{subsec:but2} can do better.  They note that `no internal categoricity result can show that a theory pins down a unique L-structure in the model-theorist's sense (even up-to-isomorphism)' (\cite{MR3821510}, p. 229), and for that reason, these results provide no direct comfort to the modelist. Still, they argue, taking the internalist's perspective, Theorems \ref{int_cat_ST} and \ref{intolerance1} and their corollaries make concern over the determinateness of CH difficult to sustain.\footnote{Recall the quotation from \cite{MR3821510}, p. 255, displayed at the end of Subsection~\ref{subsec:but1}.} 
    Here it's worth noting that Theorem \ref{intolerance1} only says that a certain disjunction is provable in syntactic second-order logic, which of course doesn't mean that either disjunct need be provable.  From this theorem, the internalist can infer that any model of the second-order axioms must satisfy the disjunction.  Continuing the internalist's line of thought, this can only mean that any Henkin model satisfies it -- which obviously leaves open the possibility that some such models satisfy one and some the other disjunct.  This would certainly appear sufficient to sustain reasonable concern about the determinateness of CH.  (We suggest below that this isn't the true source of internalistic                   worries about CH.)

    In any case, Button and Walsh's actual discussion takes a different approach to their goal, attempting not to establish
%
%
%
the determinateness of CH but to defuse the inclination to think otherwise:

\begin{quote}
	It is doubtful that we can understand the claim that there are multiple `equally preferable' models, some of which satisfy the continuum hypothesis and others which do not, and hence the supposed \textit{motivation} for thinking that the continuum hypothesis is indeterminate.  \cite[p. 256]{MR3821510}
\end{quote}

\noindent What they have in mind here is a concern about CH arising from the fact that ZF$^2$ admits non-isomorphic Henkin models, and they claim that this argument has been undercut.\footnote{See \cite[pp. 255-256]{MR3821510}. It seems this undercutting is largely based on the `transcendental' considerations of their chapter 9, rather than Intolerance and Deductive Inconsistency.  (Recall footnote \ref{trans}.)}  Of course, this is a modelist motivation:  the model-theoretic fact of non-isomorphic models is taken to reflect the existence of non-isomorphic pre-theoretic metaphysical structures.  But a modelist case for the determinateness of CH has already been rejected, and this argument from non-isomorphic models to metaphysical structures is not one any self-respecting internalist would make.  If an internalist were concerned about the status of CH, it would be for other reasons, perhaps reasons not assuaged by categoricity, intolerance, or deductive inconsistency.  

This points to a more fundamental problem:  Button and Walsh haven't given internalism a fair hearing, or so we claim.  If modelism is an understanding of model theory as an applied mathematical treatment of the semantics of ordinary mathematical discourse, then internalism should be an understanding of proof theory as an applied mathematical treatment of the proving activity of mathematicians – perhaps a better counterpart to the term `modelism'  would be   `proofism'.\footnote{\label{parallel1}This parallel has its limits, two of which we note for the record, one here and another in footnote \ref{parallel2}. First is the observation that the applied math story is more plausible in the case of proof theory than  of model theory.  A modelist, noting that first-order $\PA$ has non-standard models, draws as an immediate consequence that the pre-theoretic metaphysics includes corresponding non-standard structures, but applied mathematics doesn't work that way:  the continuity of an ideal fluid is known not to carry over to actual water; applied mathematicians take great care to determine which aspects of their treatments correspond to the worldly situation and which don't, to determine which idealizations are safe in which contexts.  Likewise for the proofist:  the existence of a proof-theoretic `proof' isn't taken as conclusive evidence of the possibility, let alone the actuality, of a real world proof.  This suggests that proof theory is answerable to a worldly phenomenon capable of pushing back, while model theory is not.}  These two represent the poles of `mathematics as descriptive activity' vs. `mathematics as proving activity', depending on which is taken as fundamental.  The modelist will regard proving activity as instrumentally important, as epistemology, a way of gaining insight into the pre-theoretic structures.  The proofist will regard ordinary mathematical language as meaningful, but Austin and Wittgenstein have taught us that a discourse can be meaningful without being referential. The proofist's practice is purely syntactic in the sense that it involved no appeal to a subject matter (and in particular, no appeal to the modelist's structures).\footnote{\label{parallel2}The second limitation of the parallel between the modelist and the proofist concerns their views of proof theory and model theory, respectively.  Both regard proof theory as an applied mathematical account of real world proving activity.  Where they differ is on model theory, which the modelist also sees as an applied mathematical account, this time of the semantics of real world mathematical discourse.  The proofist doesn't think real world mathematical discourse has a semantics in the modelist's sense -- it's not referential, it isn't out to describe or discover truths about some robust subject matter.  It's simply a practice of devising and deploying mathematically rich concepts and theories, some in reaction to (presumably) shared intuitive pictures, some for purely mathematical ends, some for the purposes of natural science.  Likely due to the structure of human cognition (see, e.g., \cite{MR3930026}), these concepts and theories are expressed in terms of things with properties, things in relations, and the like.  Model theory takes place within this practice, within the imagined ontology of the practice, as an account of the relations between a formalized version of ordinary language of the discourse and the things the theory itself has imagined.  So a formal language, $L$, corresponds to ordinary mathematical discourse, just as the modelist has it, but the corresponding $L$-structure, made of elements of the imagined ontology, corresponds to nothing real.  This ploy has foundational uses -- in independence proofs, for just one example -- and its own particular interest as a branch of mathematics in its own right.}  

Much could be said about this proofism,\footnote{Some of it is said in footnotes \ref{parallel1} and \ref{parallel2}.  For allied views, see the `Enchanced If-thenism' of \cite{maddy:toappear} or the `Arealism' of \cite{MR2779203}.} but for our purposes, the central point is that `does CH have a determinate truth value?' is an essentially semantic question, not the sort of thing the proofist would (or could) ask.  If we fully commit to the proofist perspective, the salient question about CH isn't whether it has a determinate truth value -- the modelist's truth and existence simply aren't part of this discourse.  The question that arises for the proofist is  whether the set-theoretic community will ever hit upon an axiom (or other unforseen type of development) with sufficient mathematical advantages to merit adoption -- and which implies CH or implies $\neg$CH. (The worry that this may never happen would be the source of a proofist's concern about the status of CH.) Perhaps not coincidentally, this is what many set theorists who engage with foundations are actively trying to do,  whatever their metaphysical or semantics beliefs might be.  The ultimate fate of this endeavor remains uncertain, but one point is clear:  internal categoricity theorems provide no assurance of success.

We conclude that Button and Walsh have not succeeded in establishing that internalist (proofist) concerns over the status of CH are `difficult to sustain'.  

\section{Conclusion}\label{sec:con}

We've now surveyed five different attempts to put categoricity results to philosophical use.  The efforts of Dedekind, Zermelo, and Kreisel we take to have been successful; those of Parsons and Button-Walsh less so.  Can any general moral be drawn from these observations?  Is there a distinguishing feature that accounts for the disparity between the first group and the second?

One obvious difference is that Parsons and Button-Walsh focus on internal theorems and carefully police the background theory in which they're proved, for the sake of a kind of purity:  first-order $\PA$ (Parsons) or syntactic second-order logic (Button-Walsh).  These features or their absence simply aren't salient for the earlier authors, so it's worth asking what explains their centrality for Parsons and Button-Walsh.  The answer seems to lie in the fact that their respective goals are qualitatively different from those of their predecessors.  Parsons is out to show the determinateness of our concept of natural number, the uniqueness of the structure it purportedly describes (up to isomorphism).  Button and Walsh hope to establish that it's at least extremely difficult to maintain that CH lacks a determinate truth value.  Both these projects concern an ambient pre-theoretic metaphysics that stands in relation to ordinary mathematical discourse (referred to, expressed by).  In contrast, the efforts of Dedekind, Zermelo, and Kreisel are closely tied to mathematical ambitions: Dedekind demonstrates that the concept of natural number can be characterized without appeal to spatio-temporal intuition; Zermelo shows how the set-theoretic axioms, especially Foundation, generate a streamlined and fruitful characterization of the universe of sets; Kreisel points out that the newly established independence of CH is of a new variety, qualitatively different from that of previous examples like large cardinal axioms.  These are all significant philosophical advances, but none concern the semantics of ordinary mathematical discourse or its purported, pre-theoretic subject matter.  

One last question: if these new pure internal categoricity theorems don't establish what recent writers had hoped, are they without foundational significance?  Perhaps unsurprisingly, we think the first-order theorems do make an important philosophical point: an outcome that was thought to require second-order resources – namely, categoricity theorems – can actually be achieved by suitable first-order means.  What seems to be crucial is a link between the languages of the two relevant models -- whether hidden in the second-order comprehension axioms of Theorems \ref{folklore_restated} and \ref{int_cat_ST} or explicit in the assumptions of Theorems \ref{theorem2} and \ref{int_cat_ST_first}. This is a useful discovery, which supports our general moral:  a bit of mathematics that fails at one task might succeed (and even be aimed) at another.

\appendix  

\nocite{Benacerraf1983-BENPOM-18,MR1851828}

\bibliography{intcat}

\subsection*{Acknowledgements}

The first author would like to thank Charles Leitz, Chris Mitsch, Stella Moon, Jeffrey Schatz, and Evan Sommers for helpful comments on an earlier draft.  The second  author would like to thank  the Academy of Finland, grant no: 322795. This project has received funding from the European Research Council (ERC) under the European Union’s Horizon 2020 research and innovation programme (grant agreement No 101020762).
\end{document}